\newcommand{\Lchid}{L_E(s,\chi_d)}
\newcommand{\Lambdachid}{\Lambda(s,\chi_d)}
\newcommand{\Lambdachidone}{\Lambda(1-s,\chi_d)}
\newcommand{\Lambdaderivative}{\frac{\Lambda'(s,\chi_d)}{\Lambda(s,\chi_d)}}
\newcommand{\RRe}{\text{Re}}
\newcommand{\sym}{\mbox{\rm sym}}
\newcommand{\kommentar}[1]{}
\newcommand{\dalpha}{\frac{d}{d\alpha}}
\newcommand{\RR}{\mathbb{R}}
\newcommand{\NN}{\mathbb{N}}
\newcommand{\supp}{{\rm supp}}
\newcommand{\hg}{\widehat{g}}
\newcommand{\hphi}{\widehat{\phi}}  %phi^
\newcommand\be{\begin{equation}}
\newcommand\ee{\end{equation}}
\newcommand\bea{\begin{eqnarray}}
\newcommand\eea{\end{eqnarray}}
\newcommand\bi{\begin{itemize}}
\newcommand\ei{\end{itemize}}
\newcommand\ben{\begin{enumerate}}
\newcommand\een{\end{enumerate}}
\newcommand\bc{\begin{center}}
\newcommand\ec{\end{center}}
\newcommand\ba{\begin{array}}
\newcommand\ea{\end{array}}
\def\notdiv{\ \mathbin{\mkern-8mu|\!\!\!\smallsetminus}}
\newcommand{\R}{\ensuremath{\mathbb{R}}}
\newcommand{\C}{\ensuremath{\mathbb{C}}}
\newcommand{\fof}{\frac{1}{4}}  %oneforth
\newcommand{\foh}{\frac{1}{2}}  %onehalf
\newcommand\lag[2]{\ensuremath{\left(\frac{#1}{#2}\right)}}
\newtheorem{thm}{Theorem}[section]
\newtheorem{lem}[thm]{Lemma}
\theoremstyle{definition}
\newtheorem{rek}[thm]{Remark}
\newcommand{\twocase}[5]{#1 \begin{cases} #2 & \text{{\rm #3}}\\ #4
&\text{{\rm #5}} \end{cases}   }
\newcommand{\gep}{\epsilon}
\newcommand{\intii}{\int_{-\infty}^\infty}
\newcommand{\ci}{\frac1{2\pi i}}
\numberwithin{equation}{section}
\begin{document}

\title{An elliptic curve test of the $L$-Functions Ratios Conjecture}

\author{Duc Khiem Huynh}\email{dkhuynhms@gmail.com}\address{Department of Pure Mathematics, University of Waterloo, Waterloo, ON, N2L 3G1, Canada}

\author{Steven J. Miller}\email{Steven.J.Miller@williams.edu}\address{Department of Mathematics and Statistics, Williams College, Williamstown, MA 01267}

\author{Ralph Morrison}\email{morrison@math.berkeley.edu}\address{Department of Mathematics and Statistics, Williams College, Williamstown, MA 01267}\curraddr{Department of Mathematics, University of California at Berkeley, Berkeley, CA 94708}

%\subjclass[2000]{11M26 (primary), 11M41, 15A52 (secondary).}

\subjclass[2010]{11M41 (primary), 15B52 (secondary).}

\keywords{$1$-Level Density, Dirichlet $L$-functions, Low Lying
Zeros, Ratios Conjecture, Twists of an Elliptic Curve}

\date{\today}

\thanks{
The first named author was partially supported by EPSRC, a CRM postdoctoral fellowship and NSF grant DMS0757627.
The second named author was partially supported by NSF grants DMS0855257 and DMS0970067. The third named author was partially supported by an NSF Graduate Fellowship.}

\begin{abstract} We compare the $L$-Function Ratios Conjecture's prediction with number theory for the family of quadratic twists of a fixed elliptic curve with prime conductor, and show agreement in the 1-level density up to an error term of size $X^{-\frac{1-\sigma}{2}}$ for test functions supported in $(-\sigma,\sigma)$; this gives us a power-savings for $\sigma<1$. This test of the Ratios Conjecture introduces complications not seen in previous cases (due to the level of the elliptic curve). Further, the results here are one of the key ingredients in the companion paper \cite{DHKMS2}, where they are used to determine the effective matrix size for modeling zeros near the central point for this family. The resulting model beautifully describes the behavior of these low lying zeros for finite conductors, explaining the data observed by Miller in \cite{Mil3}.

A key ingredient in our analysis is a generalization of Jutila's bound for sums of quadratic characters with the additional restriction that the fundamental discriminant be congruent to a non-zero square modulo a square-free integer $M$. This bound is needed for two purposes. The first is to analyze the terms in the explicit formula corresponding to characters raised to an odd power. The second is to determine the main term in the 1-level density of quadratic twists of a fixed form on ${\rm GL}_n$. Such an analysis was performed by Rubinstein \cite{Rub}, who implicitly assumed that Jutila's bound held with the additional restriction on the fundamental discriminants; in this paper we show that assumption is justified.
\end{abstract}

%\pagebreak

\maketitle

\setcounter{equation}{0}
    %=======================================================
    %   You need the setcounter command above to reset the
    %   equation counting in each section. wi        thout it, you'd
    %   have 1.1, 1.2, 1.3, 2.4, 2.5 -- with it (you put it
    %   after each new section) you have 1.1, 1.2, 1.3, 2.1,
    %========================================================

\tableofcontents

%%%%%%%%%%%%%%%%%%%%%%%%%%%%%%%%%%%%%%%%%%%%%%%%%%%%%%%%%%%%%%%%%%%%%%%%%%%%%%%%%%%%%%%%%%%%%
%%%%%%%%%%%%%%%%%%%%%%%%%%%%%%%%%%%%%%%%%%%%%%%%%%%%%%%%%%%%%%%%%%%%%%%%%%%%%%%%%%%%%%%%%%%%%

\section{Introduction}

One of the most important areas in modern number theory is the study of the distribution of the zeros of $L$-functions.
These zeros encode crucial number theoretic information on subjects ranging from the distribution of the primes (from simply the number of primes at most $x$ to biases in the distribution of primes in various residue classes) to properties of class numbers to (conjecturally) the geometric rank of the Mordell-Weil group of rational solutions of an elliptic curve. Further, the observed behavior is similar to that found in nuclear physics and other disciplines, suggesting deep connections between this branch of mathematics and other fields.  The General Riemann Hypothesis (GRH), often considered the most important open question in mathematics, is the conjecture that all non-trivial zeros of these $L$-functions have real part equal to $1/2$. As powerful as this conjecture is, there are many problems in number theory where just knowing the real parts are $1/2$ is not enough, and we need to know finer properties of the distribution of the zeros on the critical line $\Re(s) = 1/2$.

As proofs of properties of these zeros have eluded researchers since Riemann's seminal paper, methods of modeling these zeros are indispensable in understanding and formulating appropriate conjectures about $L$-functions. Many models have had various degrees of success. Perhaps the most famous are those arising from Random Matrix Theory (see for example \cite{KaSa1,KaSa2,KeSn1,KeSn2,KeSn3} among others, and \cite{FM} for some of the history of the interplay between nuclear physics and number theory). Unfortunately, these models are only able to predict the main term behavior in the problems of interest, and in many situations the arithmetic of the family of $L$-functions only surfaces in lower order terms (see for instance \cite{Mil2,Mil6,Yo1}). This often requires the arithmetic to be added in an ad-hoc fashion. Another approach, which has the advantage of including the arithmetic directly, is the hybrid model (see \cite{GHK}), where $L$-functions are modeled by the product of a partial Hadamard product of zeros (which is expected to be described by Random Matrix Theory) and a partial Euler product (which is expected to provide the arithmetic).

In this work we discuss another method, the $L$-function Ratios Conjecture of Conrey, Farmer and Zirnbauer \cite{CFZ1,CFZ2}. We concentrate on the family of quadratic twists of a fixed elliptic curve of prime conductor. The paper is organized as follows. We first describe the statistic of interest (the one-level density), and then discuss the Ratios Conjecture's prediction and its implications. The rest of the paper is devoted to proving the conjecture. We calculate the number theory in \S\ref{sec:numbertheory}, and show for suitable test functions that it agrees with the Ratios' prediction in \S\ref{derivativeofAE}. A key step in the analysis is generalizing Jutila's bound for character sums, which we do in \S\ref{sec:jutila}. In addition to being of use for this problem, this result was also implicitly used by Rubinstein \cite{Rub} in determining the main term in the one-level density for twists of a fixed ${\rm GL}_n$ form.

\subsection{One-Level Density of Low Lying Zeros}

Assuming GRH, the non-trivial zeros of $L$-functions lie on the critical line, and thus it makes sense to study the distribution of spacings. There is a mix of theoretical and experimental evidence (\cite{Mon,Hej,RS,Od1,Od2}) relating these normalized spacings in the limit as we climb the critical line to the scaled spacings between eigenvalues of random matrix ensembles as the matrix size tends to infinity. Initially this suggested that the Gaussian Unitary Ensemble (GUE) of matrices was the correct (and only) model needed for number theory; however, Katz and Sarnak showed that the classical compact groups (subgroups of $N\times N$ unitary matrices) all have the same $n$-level correlations as the GUE as $N\to\infty$. There is thus more to the story, and we need a statistic which is sensitive to finer properties of the $L$-functions.

One such statistic is the one-level density of the low lying zeros of a family of $L$-functions, which is different for the scaling limits of the different classical compact groups. Fix a Schwartz test function $\phi$ such that $\hphi$ is supported in, say, $(-\sigma, \sigma)$. Let $L$ be related to the local rescaling near the central point, so that normalized zeros near $s=1/2$ have mean spacing one. For an $L$-function $L(s,f)$, its one-level density is
defined by
\be D(f,\phi)\ := \ \sum_{\gamma_f}\phi\left(\frac{\gamma_f L}{\pi}\right);
\ee here $1/2 + i\gamma_f$ runs over the non-trivial zeros of the $L$-function (which under GRH all have $\gamma \in \R$) and $L/\pi$ is the scaling factor (it is related to the logarithm of the analytic conductor).\footnote{Many works in the literature use $L'/2\pi$; as this is a companion paper to \cite{HKS} we use their notation to facilitate calling their equations.} Using the explicit formula (see for instance \cite{Mes,RS}), we replace the sum of $\phi$ at the scaled zeros with sums of $\hphi$ at the logarithms of the primes, weighted by the Fourier coefficients of the $L$-function. As $\phi$ is a Schwartz function, it vanishes rapidly as $|x|\rightarrow\infty$ and thus most of the contribution is from zeros near the central point (relative to the local average spacing).

Ideally we would use a delta spike instead of a Schwartz test function to get a perfect picture at a point;  however, the delta spike has a Fourier transform of infinite support, which leads to weighted prime sums we cannot evaluate.  As each $L$-function only has a bounded number of zeros within the average spacing of the central point, it is necessary to average the one-level density over all $f$ in a family $\mathcal{F}$. This allows us to use results from number theory\footnote{The needed result depends of course on the family being studied. For Dirichlet $L$-functions one uses the orthogonality of the characters, for elliptic curves one uses properties of sums of Legendre symbols, while for cuspidal newforms one uses the Petersson formula.} to determine the behavior on average near the central point. The exact nature of just what constitutes a family is still being determined; standard examples include $L$-functions attached to Dirichlet characters, cuspidal newforms, and families of elliptic curves to name just a few.

We assume our family of $L$-functions $\mathcal{F}$ can be ordered by conductor, and denote by $\mathcal{F}(Q)$ all elements of the family whose conductor is at most $Q$. Thus the quantity of interest ends up being
\be D(\mathcal{F},\phi) \ := \ \lim_{Q\to\infty}  \frac{1}{|\mathcal{F}(Q)|}\sum_{f\in\mathcal{F}(Q)} D(f,\phi) \ = \ \lim_{Q\to\infty} \frac{1}{|\mathcal{F}(Q)|}\sum_{f\in\mathcal{F}(Q)}\sum_{\gamma_f}\phi\left(\frac{\gamma_f L}{\pi}\right).
\ee
In other words, we consider the limiting behavior of the average of the one-level densities as the conductors grow. To date a large number of families have been investigated (such as Dirichlet $L$-functions, elliptic curves, cuspidal newforms, symmetric powers, number fields, and convolutions of such families, to name a few), and for suitably restricted test functions the main terms in the one-level densities agree with the scaling limits of a classical compact group; see for example \cite{DM1,DM2,FI,Gao,Gu,HM,HR,ILS,Mil1,OS1,OS2,RR,Ro,Rub,Yo2}.

\subsection{The Ratios Conjecture}

While Random Matrix Theory has successfully predicted the main term of the one-level density of all families studied to date, it is insufficient as it is silent on lower order terms. These terms are important for many reasons. The first is that the arithmetic of the family is often absent in the main term but present in lower order terms (see for instance \cite{Mil2,Mil6,Yo1}). For example, in \cite{Mil6} lower order effects were found related to the torsion group of the family of elliptic curve $L$-functions. Further, these lower order terms are important, as they control the rate of convergence to the predicted limiting behavior. This work is motivated by the companion paper \cite{DHKMS2}. The authors there discuss a proposed model which explains the observed repulsion found by Miller \cite{Mil3} of zeros of elliptic curve $L$-functions near the central point. One of the two main ingredients in the model is the first lower order term in the one-level density in elliptic curve families, which is needed to determine the effective matrix size. The Ratios' prediction of this was worked out in another companion paper, \cite{HKS}; the purpose of this paper is to verify the Ratios' prediction (at least for suitably restricted support).

The $L$-function Ratios Conjecture of Conrey, Farmer and Zirnbauer \cite{CFZ1,CFZ2} (see also \cite{CS1} for many worked out examples of the conjecture's prediction) are formulas for the averages over families of $L$-functions of ratios of products of shifted $L$-functions.  Their ``recipe'' for performing these calculations starts by using the approximate functional equation, where the error term is discarded, to expand the $L$-functions in the numerator; the $L$-functions in the denominator are expanded via the Mobius function. They then average over the family, and retain only the diagonal pieces. These are restricted sums over integers, but are then completed and extended to sums over all integers; again the error term introduced is ignored.  These methods, far simpler to implement than rigorous analysis, have easily predicted the answers to many difficult computations, and have shown remarkable accuracy. The resulting formulas make very detailed predictions on numerous problems, ranging from moments to spacings between
adjacent zeros and values of $L$-functions.

A standard test of the Ratios Conjecture is to compare the Ratios Conjecture's predictions for the one-level density  of a family of $L$-functions with the corresponding rigorous calculation. Agreement has been found for suitably restricted test functions for many families. See \cite{CS1,GJMMNPP,Mil3,Mil5,Mil6,MilMon}, as well as \cite{BCY,CS1,CS2} for agreement with other statistics.  In addition to strengthening the credibility of the conjecture, these calculations provide insight into the significance of the terms that arise in the number theoretic calculations whose corresponding terms in the Ratios Conjecture's predictions are more clearly understandable. For example, in \cite{Mil5} the Ratios Conjecture's prediction allows the interpretation of a lower order term in the behavior of the family of quadratic Dirichlet characters as arising from the non-trivial zeros of the Riemann zeta function.

Our primary object of study is the collection of quadratic twists of a fixed elliptic curve of prime conductor $M$. The families associated to elliptic curves are of considerable importance, as they are the best laboratories (see \cite{Mil3}) to see the effect of multiple zeros on nearby zeros. By work of C. Breuil, B. Conrad, F. Diamond. R. Taylor and A. Wiles \cite{BCDT,TW,Wi}, the $L$-function of an elliptic curve agrees with that of a weight 2 cuspidal newform of level $N$ (where the integer $N>1$ is the conductor of the elliptic curve). The Ratios' prediction was computed in \cite{HKS}, and was one of the key inputs in \cite{DHKMS2} in explaining the observed repulsion of zeros near the central point in families of elliptic curve $L$-functions (see \cite{DHKMS1} for an analysis of random matrix quantities relevant for the model and comparison).  We perform the number theoretic calculations of the zero statistics for the one-level density for this family, and compare our results to the Ratios Conjecture's prediction.  For a similar case see \cite{MilMor}, which performed comparable calculations for the family of quadratic twists of the $L$-function associated to Ramanujan's tau function, and found agreement with the Ratios' prediction up to a power-savings error term.  These $L$-functions are similar to our elliptic curve $L$-functions but without the bad prime.  The simpler case provided a useful guide for performing the more complicated analysis found in this paper.

We first set some notation for the paper. We always denote our elliptic curve by $E$, which we assume has prime conductor $M$ and even functional equation. We consider the family of quadratic twists,
%\be
%{\mathcal{F}(X)} = \left\{0 < d \leq X : d {\rm\ an\ even\ fundamental\ discriminant\ and\ } \chi_d(-M)\omega_E = 1 \right\},\label{eq:familydefinition}
%\ee and set \be X^\ast\ =\ |\mathcal{F}(X)|, \ \ \ \ \ L\ =\ \log\left(\frac{\sqrt{M}X}{2\pi} \right).\label{eq:scalingdefinition} \ee

\begin{equation}\label{eq:familydefinition}
{\mathcal{F}(X)} = \left\{0 < d \leq X : d \mbox{~an even fundamental discriminant and~} \chi_d(-M)\omega_E = 1 \right\}
\end{equation}
and set
\begin{equation}\label{eq:scalingdefinition}
X^\ast\ =\ |\mathcal{F}(X)|, \quad\quad L = \log\left(\frac{\sqrt{M}X}{2\pi} \right).
\end{equation}

The Ratios Conjecture's prediction for these lower order terms, computed in \cite{HKS}, has been inputted in some of these models, but has not yet been verified. The main obstacle in verifying the prediction, at least for suitably restricted test functions, is the presence of the level $M$ in the Euler products in the prediction. This leads to more complicated formulas than in \cite{Mil5}, where we studied just quadratic Dirichlet characters. While the resulting Euler products are harder to analyze than other cases, we are still able to show agreement with a power savings.

Our main (number theory) result is the following:
\begin{thm}\label{thm:numbertheoryresult} Let $E$ be an elliptic curve with even functional equation and prime conductor $M$ and $g$ an even Schwartz test function whose Fourier transform $\widehat{g}$ is supported in $(-\sigma, \sigma)$. The one-level density of the family of even quadratic twists of $E$ by even fundamental discriminants at most $X$ is
\begin{eqnarray}\label{eq:numbtheoryresulteq}
& &  \frac{1}{X^*}\sum_{d \in \mathcal{F}(X)}\sum_{\gamma_d} g\left(\gamma_d \frac{L}{\pi}\right)  \nonumber\\
& =\ & \frac{g(0)}{2}+ \frac{1}{2 LX^*} \int_{-\infty}^\infty g(\tau) \sum_{d \in \mathcal{F}(X)}
\left[2\log\left(\frac{\sqrt{M}|d|}{2 \pi} \right)
+ \frac{\Gamma'}{\Gamma}\left(1 + i\frac{\pi \tau}{L}\right) +\frac{\Gamma'}{\Gamma}\left(1 - i\frac{\pi \tau}{L}\right) \right] d \tau\nonumber\\
& & \ + \frac{1}{L}\intii
g(\tau) \left(-\frac{\zeta'}{\zeta}\left(1+\frac{2\pi i \tau}{L}\right)+ \frac{L_E'}{L_E}\left({\rm sym}^2,1+\frac{2\pi i \tau}{L}\right)-\sum_{\ell=1}^\infty\frac{(M^\ell-1)\log M}{M^{\left(2+\frac{2\pi i\tau}{L}\right)\ell}}\right) d\tau\nonumber\\
& & \ -\frac{1}{L} \sum_{k=0}^\infty \int_{-\infty}^\infty g(\tau) \frac{\log M}{M^{(k+1)(1 + \frac{\pi i \tau}{L})}} d \tau+\frac{1}{L} \int_{-\infty}^\infty g(\tau) \sum_{p\nmid M} \frac{\log p}{(p + 1)} \sum_{k = 0}^\infty \frac{\lambda(p^{2k + 2}) - \lambda(p^{2k}) }{p^{(k + 1)(1 + \frac{2 \pi i \tau}{L})}}\ d\tau\nonumber\\
& & \ + O_M\left(X^{-\frac{1-\sigma}2}  \log^6 X\right).
\end{eqnarray}
\end{thm}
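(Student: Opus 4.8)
The plan is to start from the explicit formula for each $L$-function $L_E(s,\chi_d)$ in the family, applied to the test function $g$ rescaled by $L/\pi$. This expresses $\sum_{\gamma_d} g(\gamma_d L/\pi)$ as a sum of three pieces: a contribution from the gamma factors (the archimedean place), a contribution $-\frac{2}{L}\sum_p \sum_k \frac{\alpha_p(d)^k+\beta_p(d)^k}{p^{k/2}}\log p \cdot \hg\!\left(\frac{k\log p}{L}\right)$ from the Euler product at good primes, and analogous terms at the bad prime $p=M$ (where the Euler factor is degree one, so the local roots are $\pm M^{-1/2}$ or similar depending on reduction type). Averaging over $d \in \mathcal{F}(X)$ then reduces everything to understanding the average over the family of the quantities $\chi_d(p^k)$ weighted by the Satake parameters of $E$. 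Write $\lambda(n)$ for the normalized coefficients of $E$, so $a_E(p^k)$ splits into $\lambda$-pieces; at good primes $\chi_d(p)$ appears to the $k$-th power, so the $k$ even and $k$ odd cases behave very differently.

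Next I would separate the $k$ even and $k$ odd terms. For $k$ even, $\chi_d(p^k) = \chi_d(p)^k = 1$ whenever $p \nmid d$, so the average over the family of these terms is essentially $1$ up to the density of $d$ divisible by $p$; carrying out this average over even fundamental discriminants with the congruence condition $\chi_d(-M)\omega_E=1$ produces the ``main term'' pieces — the $\zeta'/\zeta$, the $L_E'/L_E(\mathrm{sym}^2,\cdot)$, the $\sum_{p\nmid M}\frac{\log p}{p+1}\sum_k(\lambda(p^{2k+2})-\lambda(p^{2k}))p^{-(k+1)(\cdots)}$ term, and the bad-prime sums $\sum_\ell \frac{(M^\ell-1)\log M}{M^{(2+\cdots)\ell}}$ and $\sum_k \frac{\log M}{M^{(k+1)(\cdots)}}$. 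This bookkeeping is lengthy but is essentially the same Perron/Euler-product manipulation as in \cite{Mil5} and \cite{MilMor}, complicated here only by the presence of the level $M$: the local factor at $M$ must be handled by hand and contributes the two extra $M$-dependent sums, while the $p+1$ in the denominator comes from the weight $1/|\mathcal{F}(X)|$ interacting with the count of $d$ that are (or are not) squares mod small primes. The gamma-factor term, together with the $\log(\sqrt M |d|/2\pi)$ coming from the conductor of the twisted curve, gives the $g(0)/2$ plus the explicit $\frac{1}{2LX^*}\int g(\tau)[\,\cdots\,]d\tau$ line, via the standard $\hg \mapsto \frac{1}{L}\int g$ duality and the logarithmic derivative of $\Gamma$.

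For $k$ odd, $\chi_d(p^k) = \chi_d(p)$ is a genuine non-trivial quadratic character sum, and here I would invoke the generalized Jutila bound proved in \S\ref{sec:jutila}: the point is precisely that the $d$'s in $\mathcal{F}(X)$ are restricted to lie in a fixed square class modulo the square-free modulus $M$ (from $\chi_d(-M)\omega_E=1$), so the ordinary Jutila estimate does not directly apply, and the refined version with the congruence restriction is needed to bound $\sum_{d\in\mathcal{F}(X)} \chi_d(p)$ by something like $X^{1/2+\epsilon}$ uniformly. Summing this over primes $p \le X^\sigma$ (the range forced by $\supp\hg \subset (-\sigma,\sigma)$) with the weight $\log p / p^{k/2}$, and summing over odd $k$, produces a total error of size $X^{-(1-\sigma)/2}$ times a power of $\log X$; chasing the exact power of $\log$ through the Jutila bound and the prime sum gives the $\log^6 X$. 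The main obstacle is exactly this step: one must control the odd-character contribution uniformly in $p$ with a power saving, which is why the generalized Jutila bound is the technical heart of the paper — everything else is the (admittedly intricate) accounting needed to match the completed Euler products against the closed-form expression predicted in \cite{HKS}. I would finish by collecting all error terms (the discarded tails in completing the integer sums, the error in the approximate functional equation for $L_E(s,\chi_d)$, and the Jutila error) and checking they are all $O_M(X^{-(1-\sigma)/2}\log^6 X)$.
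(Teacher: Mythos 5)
Your plan follows the paper's strategy: apply the explicit formula, split the resulting prime sum into even and odd powers of $\chi_d(p)$, handle the even powers via Euler-product manipulations and density counts (yielding the $\zeta'/\zeta$, $L_E'/L_E({\rm sym}^2,\cdot)$, bad-prime, and $1/(p+1)$ pieces), and bound the odd powers with the generalized Jutila bound. This matches the paper's $S_{\rm even}$ / $S_{\rm odd}$ decomposition, and you correctly identify the generalized Jutila bound as the technical crux.

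There is, however, a concrete error in the bookkeeping of the $g(0)/2$. You attribute it to ``the gamma-factor term, together with the $\log(\sqrt M |d|/2\pi)$ coming from the conductor.'' That is not how it arises here. In this paper's explicit formula the archimedean piece contributes exactly the integral
$\frac{1}{2LX^*}\int_{-\infty}^\infty g(\tau)\sum_{d}\big[2\log(\sqrt M|d|/2\pi)+\frac{\Gamma'}{\Gamma}(1+\frac{i\pi\tau}{L})+\frac{\Gamma'}{\Gamma}(1-\frac{i\pi\tau}{L})\big]\,d\tau$
and nothing more --- no extra $g(0)/2$. The $g(0)/2$ instead emerges from the even-$k$, good-prime sum: when one writes
$-\frac{1}{L}\sum_{p\nmid M}\sum_{k\ge1}(\alpha_p^{2k}+\beta_p^{2k})p^{-k}\log p\,\hg(k\log p/L)$
as a Perron-type contour integral against $\sum_n\Lambda_E(n)n^{-z}$ and substitutes
$\sum_n\Lambda_E(n)n^{-z}=\frac{\zeta'}{\zeta}(z)-\frac{L_E'}{L_E}({\rm sym}^2,z)+\sum_{\ell\ge1}\frac{(M^\ell-1)\log M}{M^{(z+1)\ell}}$,
the pole of $\zeta'/\zeta$ at $z=1$ produces a residue of $g(0)/2$ as the contour is pushed to $\Re(z)=1$. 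Carrying your plan through as written, the archimedean piece would leave you one $g(0)/2$ short, and the contour shift in the prime sum would hand you one you had not budgeted for; the calculation would self-correct, but the plan misidentifies the mechanism, and that is exactly the kind of cancellation one needs to anticipate to match the Ratios' prediction term by term. A smaller point: the explicit formula used here is an exact contour-shift identity under GRH, not an approximate functional equation, so there is no AFE truncation error to include in the error budget --- the error terms come solely from the $O(X^{1/2})$ in the count of $d$ with $p\mid d$ (which, after dividing by $X^*$, is harmless) and from the generalized Jutila bound applied to $S_{\rm odd}$.
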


Much of the work in determining the Ratios' prediction was done in \cite{HKS}. In this work we finish the analysis, rewriting the expansion from \cite{HKS} to facilitate comparisons with number theory.

\begin{thm}\label{thm:agreement} Notation as in Theorem \ref{thm:numbertheoryresult}, the prediction from the Ratios Conjecture is
\begin{eqnarray} \nonumber
&& \frac{1}{X^*}\sum_{d \in \mathcal{F}(X)} \sum_{\gamma_d}g\Big(\frac{\gamma_d
L}{\pi}\Big)
\\ \nonumber
&& =~ \frac{1}{2LX^*} \int_{-\infty}^\infty g(\tau) \sum_{d \in \mathcal{F}(X)}
\Bigg[
2\log
\left(\frac{\sqrt{M}|d|}{2\pi} \right) +
\frac{\Gamma'}{\Gamma}\Big(1 + \frac{i\pi \tau}{L}
\Big) + \frac{\Gamma'}{\Gamma}\Big(1 - \frac{i \pi \tau}{L}\Big)
\Bigg]
d\tau
\\ \nonumber
&& + \frac{1}{L}\intii
g(\tau) \left(-\frac{\zeta'}{\zeta}\left(1+\frac{2\pi i \tau}{L}\right)+ \frac{L_E'}{L_E}\left({\rm sym}^2,1+\frac{2\pi i \tau}{L}\right)-\sum_{\ell=1}^\infty\frac{(M^\ell-1)\log M}{M^{\left(2+\frac{2i\pi\tau}{L}\right)\ell}}\right) d\tau\\ \label{formularfinalthm}
&& ~~~-\frac{1}{L} \sum_{k=0}^\infty \int_{-\infty}^\infty g(\tau) \frac{\log M}{M^{(k+1)(1 + \frac{\pi i \tau}{L})}} d \tau+\frac{1}{L} \int_{-\infty}^\infty g(\tau) \sum_{p\nmid M} \frac{\log p}{(p + 1)} \sum_{k = 0}^\infty \frac{\lambda(p^{2k + 2}) - \lambda(p^{2k}) }{p^{(k + 1)(1 + \frac{2 \pi i \tau}{L})}} d\tau\nonumber\\
&& -\frac{1}{LX^*} \int_{-\infty}^\infty g(\tau) \sum_{d \in \mathcal{F}(X)}
\Bigg[
\bigg(\frac{\sqrt{M}|d|}{2\pi}\bigg)^{-2i \pi \tau / L}
\frac{\Gamma(1 - \frac{i \pi \tau}{L})}{\Gamma(1 + \frac{i \pi
\tau}{L})} \frac{\zeta(1 + \frac{2 i \pi \tau}{L})L_E(\mbox{\rm \sym}^2, 1 -
\frac{2 i \pi \tau}{L})}{L_E(\mbox{\rm \sym}^2,1)}\nonumber\\
&& \times A_E\Big(-\frac{i \pi
\tau}{L}, \frac{i \pi \tau}{L}\Big)
\Bigg]d\tau  \label{predictionfinalthm}
 + O(X^{-1/2+\varepsilon});
\end{eqnarray} see \S\ref{derivativeofAE} for a definition of $A_E$.% and $A_E^1$.
\end{thm}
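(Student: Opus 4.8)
The plan is to start from the Ratios Conjecture's prediction as derived in \cite{HKS}, which is an expression involving the shifted ratio $\langle L_E(\tfrac12+\alpha,\chi_d)/L_E(\tfrac12+\gamma,\chi_d)\rangle$ averaged over $d\in\mathcal{F}(X)$, together with the combinatorial factor $A_E(\alpha,\gamma)$ built from the Euler product over the primes (including the bad prime $M$). The one-level density prediction is obtained by contour-integrating the logarithmic derivative of this ratio against $g$ and then setting the shifts to $\alpha=-i\pi\tau/L$, $\gamma=i\pi\tau/L$. So the first step is to reproduce, from \cite{HKS}, the raw form of the prediction: a ``main diagonal'' piece coming from differentiating the $1/2+\alpha-\gamma$ pole structure (which yields the $\zeta'/\zeta$, $L_E'/L_E(\mathrm{sym}^2,\cdot)$ and bad-prime logarithmic-derivative terms), plus a ``swap'' piece of lower order coming from the functional-equation-swapped term, which is exactly the last bracketed integral in \eqref{predictionfinalthm} containing $(\sqrt{M}|d|/2\pi)^{-2i\pi\tau/L}$, the ratio of Gamma factors, $\zeta(1+2i\pi\tau/L)$, $L_E(\mathrm{sym}^2,1-2i\pi\tau/L)$, and $A_E(-i\pi\tau/L,i\pi\tau/L)$.

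The second and main step is to take the logarithmic derivative of the Euler-product factor $A_E(\alpha,\gamma)$ with respect to the shifts and evaluate along $\alpha=-\gamma=-i\pi\tau/L$, separating the contribution of the bad prime $M$ from the good primes $p\nmid M$. This is where the level $M$ genuinely complicates matters relative to \cite{Mil5}: one must carefully expand the local factor at $M$ (which involves only finitely many terms since $\lambda(M^k)$ is essentially geometric for a prime conductor, with $a_M=\pm 1$ after the twist), and match the resulting geometric-type series in $M^{-(k+1)(1+\pi i\tau/L)}$ and $M^{-(2+2\pi i\tau/L)\ell}$ against the two ``$\log M$'' sums appearing in Theorem \ref{thm:numbertheoryresult}. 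For the good primes one expands the local factor of $A_E$, uses the Hecke relation $\lambda(p)\lambda(p^{2k})=\lambda(p^{2k+1})+\lambda(p^{2k-1})$ together with the Euler factor $(1+p^{-1})^{-1}$-type weights coming from the sieving of fundamental discriminants, and collapses the resulting double sum into the single sum $\sum_{p\nmid M}\frac{\log p}{p+1}\sum_k\frac{\lambda(p^{2k+2})-\lambda(p^{2k})}{p^{(k+1)(1+2\pi i\tau/L)}}$ that appears verbatim in \eqref{eq:numbtheoryresulteq}. The bookkeeping here is purely formal (no analytic estimates), but it is the step most prone to sign errors and to mishandling the bad-prime local factor, so I would do it by first writing the $\mathrm{GL}_2$ symmetric-square Euler product in closed form and only then differentiating.

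The third step is cosmetic but necessary for the comparison: rewrite the Gamma-factor and the $\zeta'/\zeta$, $L_E'/L_E(\mathrm{sym}^2,\cdot)$ pieces of the prediction so that they sit in exactly the same functional form as in \eqref{eq:numbtheoryresulteq}. In particular, one checks that the $\frac{g(0)}{2}$ term in Theorem \ref{thm:numbertheoryresult} is reproduced by the residue at $\tau=0$ of the ``main'' diagonal term in the prediction (the $-\zeta'/\zeta(1+2\pi i\tau/L)$ contributes the pole that integrates against $g$ to give $g(0)/2$), so that after this rearrangement the first four lines of \eqref{predictionfinalthm} literally coincide with the first four lines of \eqref{eq:numbtheoryresulteq}. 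What remains is the single extra ``swap'' integral in \eqref{predictionfinalthm}; the content of Theorem \ref{thm:numbertheoryresult} together with Theorem \ref{thm:agreement} is then that this leftover term is $O(X^{-(1-\sigma)/2}\log^6 X)$, which follows because $g$ is Schwartz and supported in $(-\sigma,\sigma)$: shifting the $\tau$-contour (equivalently, using $\widehat g$ supported in $(-\sigma,\sigma)$ to write the $|d|^{-2i\pi\tau/L}$ factor as a shift and bounding $A_E$, $\zeta$, $L_E(\mathrm{sym}^2,\cdot)$ and the Gamma-ratio on the relevant line) produces a saving of $X^{-(1-\sigma)/2}$ up to logarithmic losses from the conductor $L=\log(\sqrt M X/2\pi)$ and from the Euler product bounds. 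I expect this last estimate, and correctly tracking the $\log^6 X$, to be the main technical obstacle, with the bad-prime bookkeeping in step two a close second; everything else is algebraic rearrangement of formulas already present in \cite{HKS}.
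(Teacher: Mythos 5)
Your steps 1 and 2 are exactly what the paper does to prove Theorem \ref{thm:agreement}: cite the raw HKS prediction (Theorem 2.3 and equation (3.11) of \cite{HKS}), compute $A_E^1(r,r)$ by logarithmically differentiating the Euler product $A_E(\alpha,\gamma)$ (separating the $p\mid M$ local factor from the $p\nmid M$ ones, identifying $\zeta'/\zeta$ and $L_E'/L_E(\mathrm{sym}^2,\cdot)$ pieces, and reducing the residual ``$B(r,r)$'' term to the geometric sum $-\sum_\ell (M^\ell-1)\log M/M^{(2r+2)\ell}$), and substitute back in. That is Lemma \ref{lem:contrAE1} plus the recap paragraph at the start of the ``Analysis of $A_E$'' subsection, and it is a purely algebraic rewriting with no analytic estimates needed. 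Note that Theorem \ref{thm:agreement} ends here; the last displayed integral (the ``swap'' term with $A_E(-i\pi\tau/L, i\pi\tau/L)$) is left as is.

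Your step 3, however, contains a genuine error, and it also belongs to Theorem \ref{thm:ntequalsratios} rather than to Theorem \ref{thm:agreement}. You claim that ``the $-\zeta'/\zeta(1+2\pi i\tau/L)$ contributes the pole that integrates against $g$ to give $g(0)/2$.'' That is not how the bookkeeping works. The integral $\frac{1}{L}\int g(\tau)\bigl(-\tfrac{\zeta'}{\zeta}(1+\tfrac{2\pi i\tau}{L})\bigr)\,d\tau$ appears verbatim and with the same interpretation on both sides (number theory and Ratios), as a conditionally convergent symmetric integral: the singularity at $\tau=0$ is odd and $g$ is even, so the limit exists. It therefore cannot also be the source of the extra constant $g(0)/2$ in Theorem \ref{thm:numbertheoryresult}. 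In the paper, the $g(0)/2$ on the number-theory side is picked up during the contour shift around the pole of $\zeta$ inside the proof of Lemma \ref{lem:ntsumkevenEC1}, while its Ratios-side counterpart is the residue at $w=\tau=0$ of the pole of $\zeta(1+w+\tfrac{2\pi i\tau}{L})$ in the swap term $A_E$ integral — that is the content of Lemma \ref{lem:expansionAEterm}, which is where the error term $O(X^{-(1-\sigma)/2})$ (no $\log^6 X$; that factor lives in the number-theory estimate) is produced. If you tried to extract $g(0)/2$ from the $-\zeta'/\zeta$ term in the prediction, you would then be missing it in the number-theory integral and would double-count or lose it, and the final $A_E$ integral would be left unaccounted for.
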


A mentioned above, the main difficulty in showing agreement between number theory and the above prediction is the presence of the level of the elliptic curve (which was not present in the symplectic family studied in \cite{Mil5}). By a careful analysis of the Euler products, we prove

\begin{thm}\label{thm:ntequalsratios} Notation as in Theorem \ref{thm:numbertheoryresult}, assuming GRH the Ratios Conjecture's prediction agrees with number theory for ${\rm supp}(\hphi) \subset (-\sigma, \sigma)$, up to error terms of size $O(X^{-(1-\sigma)/2})$.
\end{thm}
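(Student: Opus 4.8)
The plan is to prove Theorem~\ref{thm:ntequalsratios} by a direct term-by-term comparison of the two expansions in Theorems~\ref{thm:numbertheoryresult} and~\ref{thm:agreement}, regarded simply as explicit formulas. Inspecting them, every ``arithmetic'' piece occurs verbatim on both sides: the $-\zeta'/\zeta$ term, the $L_E'/L_E(\mathrm{sym}^2,\cdot)$ term, the three explicit sums $\sum_{\ell\geq 1}(M^\ell-1)\log M\cdot M^{-(2+2\pi i\tau/L)\ell}$, $\sum_{k\geq 0}\log M\cdot M^{-(k+1)(1+\pi i\tau/L)}$ and $\sum_{p\nmid M}\frac{\log p}{p+1}\sum_{k\geq 0}(\lambda(p^{2k+2})-\lambda(p^{2k}))\,p^{-(k+1)(1+2\pi i\tau/L)}$, and the $\Gamma$-factor/conductor term $\frac{1}{2LX^*}\int g(\tau)\sum_{d}[2\log(\sqrt{M}|d|/2\pi)+\frac{\Gamma'}{\Gamma}(1+\frac{i\pi\tau}{L})+\frac{\Gamma'}{\Gamma}(1-\frac{i\pi\tau}{L})]\,d\tau$. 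All of these cancel in the difference, so the theorem reduces to showing that the one remaining piece of the Ratios prediction, the ``dual'' term
\[
\mathcal{D}\ :=\ -\frac{1}{LX^*}\int_{-\infty}^{\infty} g(\tau)\sum_{d\in\mathcal{F}(X)}\Bigl(\tfrac{\sqrt{M}|d|}{2\pi}\Bigr)^{-2\pi i\tau/L}\frac{\Gamma(1-\tfrac{i\pi\tau}{L})}{\Gamma(1+\tfrac{i\pi\tau}{L})}\,\frac{\zeta(1+\tfrac{2\pi i\tau}{L})\,L_E(\mathrm{sym}^2,1-\tfrac{2\pi i\tau}{L})}{L_E(\mathrm{sym}^2,1)}\,A_E\!\bigl(-\tfrac{i\pi\tau}{L},\tfrac{i\pi\tau}{L}\bigr)\,d\tau,
\]
equals $\tfrac12 g(0)$ up to the error allowed in the statement; combined with the error terms already present in the two theorems (the $O(X^{-1/2+\varepsilon})$ of Theorem~\ref{thm:agreement} being dominated since $\sigma>0$), this yields the assertion.

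To evaluate $\mathcal{D}$, I would first pull the $d$-independent factors out of the $d$-sum and carry out $\frac{1}{X^*}\sum_{d\in\mathcal{F}(X)}|d|^{-2\pi i\tau/L}$. Since $\mathcal{F}(X)$ is a set of fundamental discriminants cut out by a congruence and a sign condition, $\#\{d\in\mathcal{F}(X):d\leq u\}=c_0 u+O(u^{1/2})$, and in particular $X^*=c_0 X+O(X^{1/2})$; partial summation on the line $\Re(2\pi i\tau/L)=0$, where $|d|^{-2\pi i\tau/L}$ has modulus $1$, gives $\frac{1}{X^*}\sum_{d\in\mathcal{F}(X)}|d|^{-2\pi i\tau/L}=\frac{X^{-2\pi i\tau/L}}{1-2\pi i\tau/L}+O\bigl((1+|\tau|)X^{-1/2}\bigr)$. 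Because $L=\log(\sqrt{M}X/2\pi)$, the surviving phase collapses to $(\sqrt{M}X/2\pi)^{-2\pi i\tau/L}=e^{-2\pi i\tau}$, and the error term, integrated against the Schwartz function $g$, contributes only $O(X^{-1/2})$. Hence
\[
\mathcal{D}\ =\ -\frac{1}{L}\int_{-\infty}^{\infty}\frac{g(\tau)\,e^{-2\pi i\tau}}{1-\tfrac{2\pi i\tau}{L}}\,\Phi(\tau)\,\zeta\!\Bigl(1+\tfrac{2\pi i\tau}{L}\Bigr)\,d\tau\ +\ O(X^{-1/2}),
\]
where $\Phi(\tau)=\frac{\Gamma(1-i\pi\tau/L)}{\Gamma(1+i\pi\tau/L)}\cdot\frac{L_E(\mathrm{sym}^2,1-2\pi i\tau/L)}{L_E(\mathrm{sym}^2,1)}\cdot A_E(-i\pi\tau/L,i\pi\tau/L)$ is holomorphic near $\tau=0$ with $\Phi(0)=1$; this normalisation, together with the analytic continuation and the size of $A_E$, is exactly what the Euler-product analysis of \S\ref{derivativeofAE} supplies.

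The integrand above has a single pole near the real axis, at $\tau=0$, coming from the simple pole of $\zeta$ at $s=1$. Interpreting the $\tau$-integral as a principal value (consistently with the common reading of the $\zeta'/\zeta$ term on both sides), and using that, since $\mathrm{supp}(\widehat{g})\subset(-\sigma,\sigma)$ with $\sigma<1$, the Paley--Wiener theorem gives $|g(x+iy)|\ll_N(1+|x|)^{-N}e^{2\pi\sigma|y|}$, so that $\tau\mapsto g(\tau)e^{-2\pi i\tau}$ is holomorphic and decays like $e^{-2\pi(1-\sigma)|y|}$ as $\Im\tau\to-\infty$, I would shift the contour down to $\Im\tau=-L/(4\pi)+o(L)$. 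Crossing $\tau=0$ picks up half of the residue of the integrand, which by $\mathrm{Res}_{s=1}\zeta=1$ and $\Phi(0)=1$ equals $\tfrac12 g(0)$ --- the desired main term, matching the $g(0)/2$ in Theorem~\ref{thm:numbertheoryresult}. On the shifted line the argument of $\zeta$ has real part $>1$, so $|\zeta|\ll 1$, while $\Phi$ and $(1-2\pi i\tau/L)^{-1}$ grow at most polynomially in $\tau$ and $L$; hence the shifted integral is $\ll e^{-2\pi(1-\sigma)L/(4\pi)}L^{O(1)}=(\sqrt{M}X/2\pi)^{-(1-\sigma)/2}L^{O(1)}\ll_M X^{-(1-\sigma)/2+\varepsilon}$, which is $O(X^{-(1-\sigma)/2})$ since $\mathrm{supp}(\widehat g)$ is compact in the open interval $(-\sigma,\sigma)$.

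The delicate step --- and the main obstacle --- is justifying that the contour may be pushed all the way to height $\asymp L/(4\pi)$, which demands control of the three analytic factors of $\Phi$ throughout the strip $-L/(4\pi)<\Im\tau\leq 0$. The $\Gamma$-ratio is harmless (its nearest pole sits at $\Im\tau=-L/\pi$), but one needs: (i) the analytic continuation of $A_E(\alpha,\beta)$ past $\Re\alpha=-\tfrac14$ together with a polynomial-size bound there --- precisely the content of the Euler-product manipulations of \S\ref{derivativeofAE}, which is also where $A_E(0,0)=1$ is verified; and (ii) the analytic control (zero-freeness and polynomial bounds) of $L_E(\mathrm{sym}^2,\cdot)$, and, through the definition of $A_E$, of $\zeta$, as their arguments move into the critical strip $\tfrac12<\Re s<1$, which is exactly where the hypothesis GRH is used. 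The exponent $(1-\sigma)/2$ is produced by the competition between the Paley--Wiener growth $e^{2\pi\sigma|\Im\tau|}$ of the test function and the decay $e^{-2\pi|\Im\tau|}$ of the conductor phase $e^{-2\pi i\tau}$, cut off at the obstruction height $L/(4\pi)$ (where $L_E(\mathrm{sym}^2,1-2\pi i\tau/L)$ reaches the critical line and $A_E$ leaves its region of absolute convergence); this is also why a power saving is obtained only for $\sigma<1$, since for $\sigma\geq 1$ the integrand of $\mathcal{D}$ no longer decays in the lower half-plane. Throughout, the structurally identical but simpler computations for quadratic Dirichlet characters in \cite{Mil5} and for the $\tau$-function family in \cite{MilMor}, which lack the level $M$, serve as a guide.
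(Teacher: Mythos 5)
Your proposal follows the same strategy as the paper's proof of Theorem~\ref{thm:ntequalsratios}: reduce to showing that the $A_E$ ``dual'' term in the Ratios prediction equals $g(0)/2$ plus an acceptable error (the paper's Lemma~\ref{lem:expansionAEterm}), carry out the $d$-sum via Lemma~\ref{lem:partialsumdexpdpiX}, factor $T(\tau)$ as a uniformly convergent Euler product $K(\tau)=A_E(-i\pi\tau/L,i\pi\tau/L)$ times $\zeta(1+2\pi i\tau/L)$ times a $\mathrm{sym}^2$-factor (with $K(0)=1$), and then shift the $\tau$-contour downward so that the simple pole of $\zeta$ at $\tau=0$ yields the half-residue $g(0)/2$ while the remaining integral is small. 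The one genuine difference is the depth of the shift: the paper pushes all the way to $w=3/2$ (i.e.\ $\Im\tau=-3L/(4\pi)$), which forces it to discuss what happens at $w=1/2$ and to bound the integral over $w=3/2$ separately, whereas you stop just short of $w=1/2$ (i.e.\ $\Im\tau\approx-L/(4\pi)$), where $e^{-2\pi i\tau}$ already gives $X^{-1/2}$ and the test function at most $X^{\sigma/2}$; this is the cleaner variant, since it avoids having to continue the Euler product $K$ past its region of absolute convergence or to invoke the functional equation of $L_E(\mathrm{sym}^2,\cdot)$, and both routes produce $O(X^{-(1-\sigma)/2})$ once $\sigma$ is slightly enlarged to absorb logarithms. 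One inaccuracy in your write-up, which incidentally mirrors an imprecision in the paper's own discussion of ``residue contributions from zeros of $L_E(\mathrm{sym}^2,s)$'': after the $Y_E^{-1}$ factor inside $A_E$ cancels against the explicit $\zeta$ and $\mathrm{sym}^2$ factors, the surviving $L_E(\mathrm{sym}^2, 1-2\pi i\tau/L)$ sits in the \emph{numerator} of the integrand, so its zeros create no poles and GRH zero-freeness in $\tfrac12<\Re s<1$ is not what the contour shift requires; what is needed there is only a polynomial-in-$\tau$ bound, which a convexity estimate already supplies.
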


%%%%%%%%%%%%%%%%%%%%%%%%%%%%%%%%%%%%%%%%%%%%%%%%%%%%%%%%%%%%%%%%%%%%%%%%%%%%%%%%%%%%%%%%%%%%%
%%%%%%%%%%%%%%%%%%%%%%%%%%%%%%%%%%%%%%%%%%%%%%%%%%%%%%%%%%%%%%%%%%%%%%%%%%%%%%%%%%%%%%%%%%%%%

\section{The Number Theory Result}\label{sec:numbertheory}

The starting point of all one-level density investigations is the explicit formula; modifying \cite{Mes,RS} (among others; see Appendix \ref{sec:expformula} for a proof) one finds the following:

\begin{lem} The one-level density for the family of quadratic twists by even fundamental discriminants of a fixed elliptic curve $E$ with even functional equation and prime conductor $M$ is
\begin{align} \nonumber
& \frac{1}{X^*}\sum_{d \in \mathcal{F}(X)}\sum_{\gamma_d} g\left(\gamma_d \frac{L}{\pi}\right) \\ \nonumber
& = \frac{1}{2 LX^*} \int_{-\infty}^\infty g(\tau) \sum_{d \in \mathcal{F}(X)}
\left[2\log\left(\frac{\sqrt{M}|d|}{2 \pi} \right)
+ \frac{\Gamma'}{\Gamma}\left(1 + i\frac{\pi \tau}{L}\right) +\frac{\Gamma'}{\Gamma}\left(1 - i\frac{\pi \tau}{L}\right) \right] d \tau\\
& \ \ \ \ - \ \frac{2}{2 L} \sum_{d \in \mathcal{F}(X)} \sum_{k = 1}^\infty \sum_p \frac{(\alpha_p^k + \beta_p^k)\chi_d^k(p)\log p}{p^{k/2}}
\widehat{g}\left(\frac{\log p^k}{2 L} \right),
\end{align}
where ${\mathcal{F}(X)}$, $X^*$, and $L$ are as defined in Equations \ref{eq:familydefinition} and \ref{eq:scalingdefinition}.
%$$
%{\mathcal{F}(X)} = \left\{0 < d \leq X : d {\rm\ an\ even\ fundamental\ discriminant\ and\ } \chi_d(-M)\omega_E = 1 \right\},
%$$ and \be X^\ast\ =\ |\mathcal{F}(X)|, \ \ \ \ \ L\ =\ \log\left(\frac{\sqrt{M}X}{2\pi} \right). \ee
\end{lem}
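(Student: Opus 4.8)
The plan is to derive the identity from the explicit formula applied to each twisted $L$-function $L_E(s,\chi_d)$ separately, and then to average over $d\in\mathcal{F}(X)$. Fix $d\in\mathcal{F}(X)$. By the modularity of $E$ and the theory of twists of newforms, $L_E(s,\chi_d)$ is entire; its completed $L$-function
\[
\Lambda(s,E\otimes\chi_d)\ =\ \left(\frac{\sqrt{M}\,|d|}{2\pi}\right)^{\!s}\Gamma\!\left(s+\tfrac12\right)L_E(s,\chi_d)
\]
satisfies $\Lambda(s,E\otimes\chi_d)=\epsilon_d\,\Lambda(1-s,E\otimes\chi_d)$ with sign $\epsilon_d=\chi_d(-M)\omega_E$, which is $+1$ for every $d$ in the family by the very definition of $\mathcal{F}(X)$. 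Writing $\alpha_p,\beta_p$ for the Satake parameters of $E$ (so $\alpha_p\beta_p=1$ and $\alpha_p+\beta_p=\lambda(p)$ at good primes, with the analogous bounded local data at the ramified primes $p\mid Md$), one has $-\frac{L_E'}{L_E}(s,\chi_d)=\sum_p\sum_{k\ge1}(\alpha_p^k+\beta_p^k)\chi_d^k(p)(\log p)\,p^{-ks}$ for $\Re(s)>1$.

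First I would set $F(s):=g\!\left(\frac{L}{\pi}\cdot\frac{s-1/2}{i}\right)$, which is entire (since $\widehat{g}$ is compactly supported, $g$ extends to an entire function of exponential type), decays rapidly on vertical lines, and satisfies $F(1-s)=F(s)$ because $g$ is even. Applying the residue theorem to $\frac{1}{2\pi i}\frac{\Lambda'}{\Lambda}(s,E\otimes\chi_d)F(s)$ on a tall rectangle with vertical sides $\Re(s)=c$ and $\Re(s)=1-c$ for a fixed $c>1$, and letting the horizontal sides escape to $\pm i\infty$ (their contribution vanishing by the decay of $F$ against the polynomial growth of $\Lambda'/\Lambda$ in vertical strips), gives
\[
\sum_{\gamma_d} g\!\left(\gamma_d\tfrac{L}{\pi}\right)\ =\ \frac{1}{2\pi i}\!\left(\int_{(c)}-\int_{(1-c)}\right)\!\frac{\Lambda'}{\Lambda}(s,E\otimes\chi_d)F(s)\,ds\ =\ \frac{1}{\pi i}\int_{(c)}\frac{\Lambda'}{\Lambda}(s,E\otimes\chi_d)F(s)\,ds,
\]
the last equality using $\frac{\Lambda'}{\Lambda}(s)=-\frac{\Lambda'}{\Lambda}(1-s)$ together with $F(1-s)=F(s)$.

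Next I would split $\frac{\Lambda'}{\Lambda}(s,E\otimes\chi_d)=\log\!\big(\tfrac{\sqrt{M}|d|}{2\pi}\big)+\frac{\Gamma'}{\Gamma}\!\big(s+\tfrac12\big)+\frac{L_E'}{L_E}(s,\chi_d)$ and treat the pieces separately. The conductor constant and the $\Gamma$-factor are holomorphic on $\Re(s)\ge\tfrac12$, so there I shift the contour down to $\Re(s)=\tfrac12$ and substitute $s=\tfrac12+i\pi\tau/L$, turning $F$ into $g(\tau)$, $ds$ into $\tfrac{i\pi}{L}d\tau$, and the $\Gamma$-argument into $1+i\pi\tau/L$; symmetrizing with the evenness of $g$ reproduces exactly $\frac{1}{2L}\int g(\tau)\big[2\log\!\big(\tfrac{\sqrt{M}|d|}{2\pi}\big)+\frac{\Gamma'}{\Gamma}(1+\tfrac{i\pi\tau}{L})+\frac{\Gamma'}{\Gamma}(1-\tfrac{i\pi\tau}{L})\big]\,d\tau$. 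For the arithmetic piece I keep $\Re(s)=c>1$, interchange the absolutely convergent sum over prime powers with the integral, and evaluate $\frac{1}{2\pi i}\int_{(c)}p^{-ks}F(s)\,ds$ by Fourier inversion, $g(z)=\int\widehat{g}(u)e^{2\pi i uz}\,du$: the inner integral collapses (to a finite sum, as $\supp\widehat{g}$ is compact) to $\tfrac{1}{2L}\,p^{-k/2}\,\widehat{g}\!\big(\tfrac{\log p^k}{2L}\big)$, and with the overall factor $2$ from the two contours this yields $-\frac{2}{2L}\sum_{k\ge1}\sum_p\frac{(\alpha_p^k+\beta_p^k)\chi_d^k(p)\log p}{p^{k/2}}\widehat{g}\!\big(\tfrac{\log p^k}{2L}\big)$. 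Averaging over $d\in\mathcal{F}(X)$ and dividing by $X^\ast$ gives the stated identity.

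The hard part is purely technical bookkeeping: one must justify the contour shifts by establishing polynomial bounds in $|\Im(s)|$ for $\frac{\Lambda'}{\Lambda}(s,E\otimes\chi_d)$ away from the zeros (so the horizontal sides of the rectangle drop out in the limit), which follows from the Hadamard factorization of the newform $L$-function together with standard convexity estimates, and one must confirm that pulling the $\Gamma$-factor contour to the critical line crosses no poles (the poles of $\Gamma'/\Gamma(s+\tfrac12)$ lie at $s=-\tfrac12,-\tfrac32,\dots$). A minor point worth a remark is that the analytic conductor used above is $Md^2$; when $M\mid d$ the Euler factor at $M$ is modified, but since $M$ is a fixed prime this perturbs only a single bounded local factor and is absorbed either into the definition of $\mathcal{F}(X)$ or into the $p\mid Md$ terms of $\frac{L_E'}{L_E}$, with no effect on the displayed formula.
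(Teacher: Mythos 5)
Your argument is correct and follows essentially the same route as the paper's Appendix A: apply the residue theorem to $\frac{\Lambda'}{\Lambda}\cdot F$ on a contour symmetric about $\Re(s)=\tfrac12$, use the functional equation together with $F(1-s)=F(s)$ to fold both vertical lines into one, split the logarithmic derivative into its archimedean and arithmetic pieces, and convert each into the stated integral and prime sum by Fourier inversion. The only cosmetic differences are that the paper keeps $H(s)+H(1-s)$ explicitly and shifts the arithmetic piece to $\Re(s)=\tfrac12$ before unwinding the Fourier transform (relying on the compact support of $\hg$ to make the sum finite), whereas you keep $\Re(s)=c>1$ and observe the same collapse there; both are valid.
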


We prove Theorem \ref{thm:numbertheoryresult} by analyzing the expansion above. As the integral term is also found in the Ratios' prediction, we need only study \bea
S & \ = \ & -\frac{2}{2 L X^*} \sum_{d \in \mathcal{F}(X)} \sum_{k = 1}^\infty \sum_p \frac{(\alpha_p^k + \beta_p^k)\chi_d^k(p)\log p}{p^{k/2}}
\widehat{g}\left(\frac{\log p^k}{2 L} \right)\ = \  S_{\rm even} + S_{\rm odd},
\eea
where
\bea
S_{\rm even}  &\ = \ &-\frac{1}{X^*} \sum_{d\in\mathcal{F}( X)} \sum_{k = 1}^\infty \sum_p \frac{(\alpha_p^{2k} + \beta_p^{2k})\chi_d^2(p) \log p}{p^k L} \widehat{g}\left(\frac{\log p^k}{L} \right)
\nonumber\\
S_{\rm odd} &=& -\frac{1}{X^*} \sum_{d\in\mathcal{F}( X)} \sum_{k = 0}^\infty \sum_p \frac{(\alpha_p^{2k + 1} + \beta_p^{2k +1})\chi_d(p) \log p}{p^{(2k+1)/2} L} \widehat{g}\left(\frac{\log p^{2k+1}}{2L} \right)
\eea
(note that $\chi_d(p) = \chi_d^{2k+1}(p)$ for any $k \in \NN$). We split $S_{\rm even}$ further by noting that
\be
\twocase{\chi_d^2(p) \ = \ }
{1}{if $p \nmid d$}{0}{if $p|d$},
\ee
and write
\be
S_{\rm even}\ =\ S_{\rm even,1} + S_{\rm even,2}
\ee
with
\bea
S_{\rm even,1} &\ =\ & -\sum_p \sum_{k = 1}^\infty  \frac{(\alpha_p^{2k} + \beta_p^{2k}) \log p}{p^k L} \widehat{g}\left(\frac{\log p^k}{L} \right)
\nonumber\\ S_{\rm even,2} &=& \frac{1}{X^*} \sum_{d\in\mathcal{F}( X)}\sum_{k = 1}^\infty \sum_{p|d} \frac{(\alpha_p^{2k} + \beta_p^{2k}) \log p}{p^k L} \widehat{g}\left(\frac{\log p^k}{L} \right).
\eea

We prove Theorem \ref{thm:numbertheoryresult} by analyzing $S_{{\rm even}}$ and $S_{{\rm odd}}$ in a series of lemmata below, frequently breaking these summands down further.

\subsection{Analysis of $S_{\rm even,1}$}

We consider $S_{\rm even, 1}$ and have
\begin{align} \nonumber
S_{\rm even,1} & = -\frac{1}{L}\sum_p \sum_{k = 1}^\infty  \frac{(\alpha_p^{2k} + \beta_p^{2k}) \log p}{p^k} \widehat{g}\left(\frac{\log p^k}{L} \right) = S_{\rm even,1,1} + S_{\rm even,1,2},
\end{align}
where
\begin{eqnarray}
S_{\rm even,1,1} & \ = \ & -\frac{1}{L} \sum_{k = 1}^\infty \frac{(\alpha_M^{2k} + \beta_M^{2k}) \log M}{M^k}
\widehat{g}\left(\frac{\log M^k}{L} \right)
\nonumber\\ S_{\rm even,1,2} &=&
-\frac{1}{L} \sum_{p\nmid M}\sum_{k = 1}^\infty \frac{(\alpha_p^{2k} + \beta_p^{2k}) \log p}{p^k}\widehat{g}\left(\frac{\log p^k}{L} \right).
\end{eqnarray}

\begin{lem} We have
\begin{equation} \label{Sevenoneonefinal}
S_{\rm even,1,1} = -\frac{1}{L} \sum_{k=1}^\infty \frac{\log M}{M^{2k}} \widehat{g}\left(\frac{\log M^k}{L} \right)
= -\frac{1}{L} \sum_{k=1}^\infty \int_{-\infty}^\infty g(\tau) \frac{\log M}{M^{2k(1 + \frac{\pi i \tau}{L})}} d \tau.
\end{equation}
\end{lem}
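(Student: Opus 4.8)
The plan is to reduce the lemma to an explicit evaluation of the Satake parameters at the bad prime, followed by a routine application of Fourier inversion. The key point is that since the conductor $M$ is prime it is in particular squarefree, so $E$ has multiplicative reduction at $M$; thus in the normalized local Euler factor $\big(1-\lambda_M p^{-s}+\psi(M)p^{-2s}\big)^{-1}$ one has $\psi(M)=0$, which forces $\alpha_M\beta_M=0$ while $\alpha_M+\beta_M=\lambda_M=a_M/\sqrt{M}=\pm1/\sqrt{M}$. Hence $\{\alpha_M,\beta_M\}=\{\pm1/\sqrt{M},\,0\}$, and since $a_M^2=1$ we get, for every $k\ge1$,
\[
\alpha_M^{2k}+\beta_M^{2k}\ =\ \lambda_M^{2k}\ =\ \frac{a_M^{2k}}{M^k}\ =\ \frac{1}{M^k}.
\]

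First I would substitute this identity into the definition of $S_{\rm even,1,1}$ and note that $\widehat{g}\!\left(\frac{\log M^k}{L}\right)=\widehat{g}\!\left(\frac{k\log M}{L}\right)$, which immediately yields
\[
S_{\rm even,1,1}\ =\ -\frac{1}{L}\sum_{k=1}^\infty \frac{\log M}{M^k}\cdot\frac{1}{M^k}\,\widehat{g}\!\left(\frac{k\log M}{L}\right)\ =\ -\frac{1}{L}\sum_{k=1}^\infty \frac{\log M}{M^{2k}}\,\widehat{g}\!\left(\frac{k\log M}{L}\right),
\]
which is the first claimed equality. For the second equality I would unfold the Fourier transform with the convention fixed earlier in the paper, $\widehat{g}(\xi)=\int_{-\infty}^\infty g(\tau)e^{-2\pi i\xi\tau}\,d\tau$, obtaining
\[
\widehat{g}\!\left(\frac{k\log M}{L}\right)\ =\ \int_{-\infty}^\infty g(\tau)\,e^{-2\pi i k\tau\log M/L}\,d\tau\ =\ \int_{-\infty}^\infty g(\tau)\,M^{-2\pi i k\tau/L}\,d\tau,
\]
and then combining exponents via $M^{2k}\cdot M^{2\pi i k\tau/L}=M^{2k(1+\pi i\tau/L)}$ gives exactly $-\frac1L\sum_{k\ge1}\int_{-\infty}^\infty g(\tau)\,\frac{\log M}{M^{2k(1+\pi i\tau/L)}}\,d\tau$.

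The only point needing a word of justification is the interchange of the sum over $k$ with the integral over $\tau$, and this is immediate from Fubini's theorem since $\sum_{k\ge1}\int_{-\infty}^\infty \frac{\log M}{M^{2k}}\,|g(\tau)|\,d\tau=\|g\|_1\,\log M\sum_{k\ge1}M^{-2k}<\infty$, using that $g$ is Schwartz (hence integrable) and $M\ge2$. I do not expect any real obstacle here: the lemma is essentially a bookkeeping step isolating the bad-prime contribution to $S_{\rm even,1}$, and the only substantive content is the elementary computation of $\alpha_M,\beta_M$ at the prime of multiplicative reduction.
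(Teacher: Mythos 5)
Your proof is correct and takes essentially the same route as the paper: both reduce the lemma to the identity $\alpha_M^{2k}+\beta_M^{2k}=M^{-k}$ at the bad prime (the paper writes this as $(\omega_E/M^{1/2})^{2k}$, which is the same thing since $\omega_E=a_M=\pm1$) and then unfold $\widehat{g}$. You simply spell out the multiplicative-reduction computation and the Fubini justification in more detail than the paper, which states them tersely.
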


\begin{proof} For $M$ we have
\begin{equation} \label{alphabetabad}
\alpha_M^{2k} + \beta_M^{2k} \ =\ \left(\frac{\omega_E}{M^{1/2}}\right)^{2k} \ = \ M^{-k}.
\end{equation}
Using (\ref{alphabetabad}) and unwinding the Fourier transform gives the claim. \end{proof}

\begin{lem}\label{lem:ntsumkevenEC1} Notation as above, \bea &&S_{{\rm even},1,2} = \nonumber\\
&& \frac{g(0)}2 + \frac{1}{L}\intii
g(\tau) \left(- \frac{\zeta'}{\zeta}\left(1+\frac{2\pi i \tau}{L}\right)+ \frac{L_E'}{L_E}\left({\rm sym}^2,1+\frac{2\pi i \tau}{L}\right)-\sum_{\ell=1}^\infty\frac{(M^\ell-1)\log M}{M^{\left(2+\frac{2\pi i\tau}{L}\right)\ell}} \right) d\tau.
\nonumber\\ \eea
\end{lem}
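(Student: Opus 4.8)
The plan is to open up the definition of $S_{\mathrm{even},1,2}$, unwind the Fourier transform, and recognize the resulting Dirichlet series as logarithmic derivatives of $L$-functions evaluated near $\mathrm{Re}(s)=1$. Starting from
$$S_{\mathrm{even},1,2} = -\frac{1}{L}\sum_{p\nmid M}\sum_{k=1}^\infty \frac{(\alpha_p^{2k}+\beta_p^{2k})\log p}{p^k}\,\widehat{g}\!\left(\frac{\log p^k}{L}\right),$$
I would first use $\widehat{g}(\log p^k / L) = \int_{-\infty}^\infty g(\tau)\, p^{-k\cdot 2\pi i\tau/(2L)}\,d\tau$ (being careful with the exact normalization of the Fourier transform used in the paper, which introduces the factor $\pi i\tau/L$ versus $2\pi i\tau/L$ — this is exactly the source of the $M^{(2+2\pi i\tau/L)\ell}$-type denominators) to write the whole sum as $\frac{1}{L}\int g(\tau) D(1+2\pi i\tau/L)\, d\tau$ for a Dirichlet series $D(s) = -\sum_{p\nmid M}\sum_{k\geq 1}(\alpha_p^{2k}+\beta_p^{2k})(\log p) p^{-ks}$. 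The task then reduces to identifying $D(s)$ for $\mathrm{Re}(s)$ slightly above $1$.

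The key algebraic step is the symmetric-square identity: writing $\alpha_p\beta_p = 1$ (since $p\nmid M$ and the newform has trivial central character after normalization, $\alpha_p\beta_p=1$), one has $\alpha_p^{2k}+\beta_p^{2k} = (\alpha_p^k+\beta_p^k)^2 - 2$, and more usefully $\alpha_p^2,\,1 = \alpha_p\beta_p,\,\beta_p^2$ are the Satake parameters of $\mathrm{sym}^2 E$ at $p$. Thus $\sum_{k\ge 1}(\alpha_p^{2k}+1+\beta_p^{2k})p^{-ks}$ is the local factor contributing to $-\frac{L_E'}{L_E}(\mathrm{sym}^2,s)$, while the extra $\sum_{k\ge1}p^{-ks}$ accounts for $-\frac{\zeta'}{\zeta}(s)$. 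Concretely, $-\sum_{p\nmid M}\sum_{k\geq1}(\alpha_p^{2k}+\beta_p^{2k})(\log p)p^{-ks} = \frac{L_E'}{L_E}(\mathrm{sym}^2,s) - \frac{\zeta'}{\zeta}(s) + (\text{correction for the Euler factors at }p=M)$. The correction term, coming from restoring the $p=M$ factor that is absent from the $\mathrm{sym}^2$ and $\zeta$ Euler products (or rather from the mismatch between the full $L_E(\mathrm{sym}^2,s)$ including its bad factor at $M$ and the product over $p\nmid M$), produces exactly $-\sum_{\ell\geq1}\frac{(M^\ell-1)\log M}{M^{(2+2\pi i\tau/L)\ell}}$ after one re-expands $\frac{\zeta'}{\zeta}$ and the $\mathrm{sym}^2$ bad Euler factor at $M$; here $M^\ell - 1 = M^\ell \cdot(1 - M^{-\ell})$ is the familiar numerator from $-\zeta_M'/\zeta_M$ type local corrections.

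Finally, the $g(0)/2$ term has to be extracted: it arises as the polar contribution. Since $-\zeta'/\zeta(s)$ has a simple pole with residue $1$ at $s=1$, the integrand $g(\tau)\cdot(-\zeta'/\zeta)(1+2\pi i\tau/L)$ is not literally the convergent object we summed — rather, the Dirichlet series $D(s)$ converges for $\mathrm{Re}(s)>1$ and we are taking $s\to 1^+$ along the line $1+2\pi i\tau/L$; shifting the contour / taking the limit picks up half the residue times $g(0)$, namely $+g(0)/2$, from the pole of $-\zeta'/\zeta$ at $s=1$ (the $L_E(\mathrm{sym}^2,\cdot)$ factor is holomorphic and nonzero there under GRH, so it contributes nothing). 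I expect the main obstacle to be precisely this pole-handling step done rigorously: justifying the interchange of the $p$-sum, $k$-sum, and $\tau$-integral, controlling convergence on the line $\mathrm{Re}(s)=1$, and correctly accounting for the $p=M$ Euler factor discrepancy so that the bad-prime correction comes out with the stated shape $\sum_{\ell\ge1}(M^\ell-1)(\log M)M^{-(2+2\pi i\tau/L)\ell}$ rather than a slightly different normalization. The rest is bookkeeping of Euler products.
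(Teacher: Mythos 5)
Your proposal is correct and follows essentially the same route as the paper: unwind the Fourier transform into a contour integral, identify $\sum_n \Lambda_E(n)n^{-s}$ with $\frac{\zeta'}{\zeta}(s)-\frac{L_E'}{L_E}(\mathrm{sym}^2,s)+\sum_{\ell\geq 1}\frac{(M^\ell-1)\log M}{M^{(s+1)\ell}}$, and extract $+\frac{g(0)}{2}$ as the half-residue from the pole of $-\zeta'/\zeta$ at $s=1$ (the paper makes this rigorous via Perron's formula and a semicircular detour $C_\delta$ around $z=1$). One minor remark: you invoke GRH to guarantee that $L_E(\mathrm{sym}^2,\cdot)$ contributes no pole at $s=1$, but this is overkill and unnecessary --- the paper simply cites Shimura's theorem that $L_E(\mathrm{sym}^2,s)$ is entire, and nonvanishing at $s=1$ is known unconditionally.
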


\begin{proof}[Proof of Lemma \ref{lem:ntsumkevenEC1}] Let \be \twocase{\Lambda_E(n)\ =\ }{(\alpha_p^{2\ell} + \overline{\alpha}_p^{2\ell}) \log p}{if $n=p^\ell$, $p\nmid M$}{0}{otherwise.} \ee

We have \be S_{{\rm even};1,2} \ = \ -\frac{1}{L}\sum_{n=1}^\infty \frac{\Lambda_E(n)}{n} \ \hg\left(\frac{\log
n}{L}\right). \ee We use Perron's formula to re-write $S_{{\rm
even};1}$ as a contour integral. For any $\gep > 0$ set \bea\label{eq:integraldefI1} I_1 \ =
\ \ci \int_{\Re(z)=1+\gep} g\left(\frac{(2z-2)\log A}{4\pi i}\right)
\sum_{n=1}^\infty \frac{\Lambda_E(n)}{n^z}\ dz; \eea we will later
take $A =\sqrt{M}X/2\pi$, so that $\log A=L$. We write $z = 1+\gep+iy$ and use
\eqref{eq:extendingphi} (replacing $\phi$ with $g$) to write
$g(x+iy)$ in terms of the integral of $\hg(u)$. We have \bea I_1 & \
= \ & \sum_{n=1}^\infty \frac{\Lambda_E(n)}{n^{1+\gep}} \ci \intii
g\left(\frac{y\log A}{2\pi}-\frac{i\gep\log
A}{2\pi}\right)e^{-iy\log n} idy \nonumber\\ &=& \sum_{n=1}^\infty
\frac{\Lambda_E(n)}{n^{1+\gep}} \frac1{2\pi} \intii \left[\intii
\left[\hg(u)e^{\gep u \log A}\right] e^{-2\pi i \frac{-y\log A}{2\pi
}u} du \right]e^{-iy\log n}dy.\ \ \ \ \ \ \eea We let $h_\gep(u) =
\hg(u) e^{\gep u \log A}$. Note that $h_\gep$ is a smooth, compactly
supported function and $\widehat{\widehat{h_\gep}}(w) = h_\gep(-w)$.
Thus \bea I_1 & \ = \ & \sum_{n=1}^\infty
\frac{\Lambda_E(n)}{n^{1+\gep}}  \frac1{2\pi} \intii
\widehat{h_\gep}\left(-\frac{y\log A}{2\pi}\right) e^{-iy\log n} dy
\nonumber\\ &=& \sum_{n=1}^\infty \frac{\Lambda_E(n)}{n^{1+\gep}}
\frac1{2\pi}\intii \widehat{h_\gep}(y) e^{-2\pi i \frac{-y \log
n}{\log A}}\ \frac{2\pi dy}{\log A} \nonumber\\ &=&
\sum_{n=1}^\infty \frac{\Lambda_E(n)}{n^{1+\gep}} \frac1{\log A}\
\widehat{\widehat{h_\gep}}\left(-\frac{\log n}{\log A}\right)
\nonumber\\
&=& \sum_{n=1}^\infty \frac{\Lambda_E(n)}{n^{1+\gep}}  \frac1{\log A}\
\hg\left(\frac{\log n}{\log A}\right) e^{\gep \log n} \nonumber\\
&=& \frac1{\log A}\sum_{n=1}^\infty \frac{\Lambda_E(n)}{n}\
\hg\left(\frac{\log n}{\log A}\right). \eea By taking $A = \sqrt{M}X/2\pi$
we find \be S_{{\rm even};1,2} \ = \ -\frac{1}{L}\sum_{n=1}^\infty \frac{\Lambda_E(n)}{n} \ \hg\left(\frac{\log
n}{L}\right) \ = \ -I_1.\ee

We now re-write $I_1$ by shifting contours; we will not pass any
poles as we shift. For each $\delta > 0$ we consider the contour
made up of three pieces: $(1-i\infty,1-i\delta]$, $C_\delta$, and
$[1-i\delta,1+i\infty)$, where $C_\delta = \{z: z-1 = \delta
e^{i\theta}, \theta \in [-\pi/2,\pi/2]\}$ is the semi-circle going
counter-clockwise from $1-i\delta$ to $1+i\delta$. By Cauchy's
residue theorem, we may shift the contour in $I_1$ from $\Re(z) =
1+\gep$ to the three curves above.

Before analyzing this integral, we rewrite $\sum_n \Lambda_E(n)n^{-z}$ as the sum of logarithmic derivatives of $L$-functions.
From (3.15) and (3.16) of \cite{ILS}, we have
 \be L_E({\rm sym}^2,s) \ = \ \prod_{p\nmid M} \left(1 - \frac{\alpha_p^2}{p^s}\right)^{-1} \left(1 - \frac{1}{p^s}\right)^{-1} \left(1 - \frac{{\beta}^2_p}{p^s}\right)^{-1}\prod_{p|M}\left(1-\frac{1}{p^{s+1}}\right)^{-1}, \ee
  as $\alpha_p \beta_p = 1$ for $p\nmid M$.  Taking the logarithmic derivative yields
  \bea \frac{L_E'}{L_E}({\rm sym}^2,s) & \ = \ & -\sum_{p\nmid M}\sum_{\ell=1}^\infty \frac{(\alpha_p^{2\ell} + 1+\beta_p^{2\ell})\log p}{p^{s\ell}}-\sum_{p|M}\sum_{\ell=1}^\infty\frac{\log p}{p^{(s+1)\ell}}\nonumber
  \\ & \ = \ & -\sum_{p\nmid M}\sum_{\ell=1}^\infty \frac{(\alpha_p^{2\ell} +\beta_p^{2\ell})\log p}{p^{s\ell}}-\sum_{p\nmid M}\sum_{\ell=1}^\infty\frac{\log p}{p^{s\ell}}-\sum_{p|M}\sum_{\ell=1}^\infty\frac{\log p}{p^{(s+1)\ell}}\label{logderivativesym}, \eea
  so
\bea \sum_{n=1}^\infty\Lambda(n)n^{-s} & \ = \ & \sum_{p\nmid M}\sum_{\ell=1}^\infty \frac{(\alpha_p^{2\ell} + \beta_p^{2\ell})\log p}{p^{s\ell}}\nonumber
\\ & \ = \ &-\sum_{p\nmid M}\sum_{\ell=1}^\infty\frac{\log p}{p^{s\ell}}-\sum_{p|M}\sum_{\ell=1}^\infty\frac{\log p}{p^{(s+1)\ell}} -\frac{L_E'}{L_E}({\rm sym}^2,s)\nonumber
\\ & \ = \ &\frac{\zeta'}{\zeta}(s)-\frac{L_E'}{L_E}({\rm sym}^2,s)+\sum_{p|M}\sum_{\ell=1}^\infty\frac{\log p}{p^{s\ell}}-\sum_{p|M}\sum_{\ell=1}^\infty\frac{\log p}{p^{(s+1)\ell}} \nonumber\
\\ & \ = \ &\frac{\zeta'}{\zeta}(s)-\frac{L_E'}{L_E}({\rm sym}^2,s)+\sum_{\ell=1}^\infty\frac{(M^\ell-1)\log M}{M^{(s+1)\ell}}
\label{sumoflogderivs}.
\eea

We use this in replacing $\sum_n \Lambda_E(n)n^{-z}$ in the integral definition of $I_1$ in \eqref{eq:integraldefI1}. We find \bea I_1 & \ = \ &
\ci\left[\int_{1-i\infty}^{1-i\delta} + \int_{C_\delta} +
\int_{1+i\delta}^{1+i\infty} g\left(\frac{(2z-2)\log A}{4\pi
i}\right) \sum_n\frac{\Lambda_E(n)}{n^z}\ dz\right]  \nonumber\\ &=&
 \ci\left[\int_{1-i\infty}^{1-i\delta} + \int_{C_\delta} +
\int_{1+i\delta}^{1+i\infty} g\left(\frac{(2z-2)\log A}{4\pi
i}\right) \right. \nonumber\\ & & \ \ \ \ \ \ \ \ \cdot \left. \left(\frac{\zeta'}{\zeta}(z)-\frac{L_E'}{L_E}({\rm sym}^2,z) +\sum_{\ell=1}^\infty\frac{(M^\ell-1)\log M}{M^{(z+1)\ell}} \right) \ dz\right].  \eea

The integral over $C_\delta$ is easily evaluated. Shimura \cite{Sh} proved that $L_E({\rm sym}^2 ,s)$ is entire, and thus so too is its logarithmic derivative. Thus there is no contribution from the symmetric square piece in the limit as $\delta \to 0$. As $\zeta(s)$ has a pole at $s=1$, $\zeta'(s)/\zeta(s) = -1/(s-1) + \cdots$, and we must multiply the contribution from the residue by $-1$ because of the pole. We get just minus half the residue of $g\left(\frac{(2z-2)\log
A}{4\pi i}\right)$, which yields the contribution from the $C_\delta$ piece is
$-g(0)/2$.

We now take the limit as $\delta \to 0$: \bea I_1 & \ = \ &
-\frac{g(0)}2 - \lim_{\delta \to 0} \frac1{2\pi}
\left[\int_{-\infty}^{-\delta} + \int_{\delta}^\infty
g\left(\frac{y\log A}{2\pi}\right) \right. \nonumber\\ & & \ \ \ \ \ \ \ \ \ \ \cdot \left.
\left( - \frac{\zeta'}{\zeta}(z)+\frac{L_E'}{L_E}({\rm sym}^2,z)-\sum_{\ell=1}^\infty\frac{(M^\ell-1)\log M}{M^{(z+1)\ell}} \right) dy\right].\eea As $g$ is an even
Schwartz function, the limit of the integral above is well-defined
(for large $y$ this follows from the decay of $g$, while for small
$y$ it follows from the fact that $\zeta'(1+iy)/\zeta(1+iy)$ has a
simple pole at $y=0$ and $g$ is even). We again take $A=\sqrt{M}X/2\pi$,
and change variables to $\tau = yL/2\pi$. Thus \bea I_1 & \ = \ & -\frac{g(0)}2 - \frac{1}{L}\intii
g(\tau) \left( -\frac{\zeta'}{\zeta}\left(1+\frac{2\pi i \tau}{L}\right)+ \frac{L_E'}{L_E}\left({\rm sym}^2,1+\frac{2\pi i \tau}{L}\right)\right.\nonumber\\ & & \left. \ \ \ \ \ \ \ \ \ \ \ \ \ \ \ \ \ -\sum_{\ell=1}^\infty\frac{(M^\ell-1)\log M}{M^{\left(2+\frac{2\pi i\tau}{L}\right)\ell}} \right) d\tau \nonumber\\ &=& -S_{{\rm even},1,2}, \eea which completes the proof of Lemma
\ref{lem:ntsumkevenEC1}. \end{proof}

\subsection{Analysis of $S_{\rm even,2}$}

\begin{lem} We have
\begin{align} \nonumber
S_{\rm even,2} \ = \  \frac{1}{L} \int_{-\infty}^\infty g(\tau) \sum_{p\nmid M} \frac{\log p}{(p + 1)} \sum_{k = 0}^\infty \frac{\lambda(p^{2k + 2}) - \lambda(p^{2k}) }{p^{(k + 1)(1 + \frac{2 \pi i \tau}{L})}} d\tau + O(X^{1/2}\log\log X).
\end{align}
\end{lem}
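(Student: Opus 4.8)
Recall $S_{\rm even,2} = \frac{1}{LX^*} \sum_{d\in\mathcal{F}(X)}\sum_{k=1}^\infty \sum_{p\mid d} \frac{(\alpha_p^{2k}+\beta_p^{2k})\log p}{p^k}\,\widehat{g}\!\left(\frac{\log p^k}{L}\right)$. The plan is to swap the order of summation so that the outer sum is over primes $p$ (with $p\nmid M$ automatically, since $d$ ranges over fundamental discriminants and any prime dividing $d$ other than $M$... actually $M$ could divide $d$, but note $\chi_d(-M)$ appears in the family condition; in any case, the contribution of the single prime $M$ is negligible and can be absorbed into the error term), and the inner object becomes $\frac{1}{X^*}\#\{d\in\mathcal{F}(X): p\mid d\}$. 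The first key step is therefore to count, for each fixed prime $p$, how many even fundamental discriminants $d\le X$ in the family are divisible by $p$; this proportion should be $\sim \frac{1}{p+1}$ times $X^*$ (this is the standard density: among fundamental discriminants, the ``probability'' that $p\mid d$ is $\frac{p}{p^2-1} = \frac{1}{p+1}\cdot\frac{p}{p-1}$... one must be careful with the exact local factor, but the factor $\frac{1}{p+1}$ that appears in the statement is what drops out after accounting for the squarefree condition at $p$). I would establish this count with an error term by writing the indicator of ``$d$ a fundamental discriminant divisible by $p$'' via a sum over squares (Möbius-type sieving for squarefreeness) and executing the resulting $d$-sum, picking up a main term proportional to $X/(p+1)$ and an error of size $O(\sqrt{X}\,\text{polylog})$ or smaller from the squarefree sieve.

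The second step is to convert the $\widehat{g}$ back to an integral of $g$: using that $\widehat{g}$ is even and $g$ is its inverse Fourier transform, $\widehat{g}\!\left(\frac{\log p^k}{L}\right) = \int_{-\infty}^\infty g(\tau)\, p^{-k\pi i\tau/L}\,d\tau$ up to the usual normalization, which after combining with $p^{-k}$ and using $\alpha_p^{2k}+\beta_p^{2k} = \lambda(p^{2k}) - \lambda(p^{2k-2})$ (the Hecke relation, writing $\alpha_p^{2k}+\beta_p^{2k}$ in terms of the coefficients $\lambda(p^j)$ — one checks $\alpha^{2k}+\beta^{2k} = \lambda_{p^{2k}} - \lambda_{p^{2k-2}}$ when $\alpha\beta=1$) and re-indexing $k\mapsto k+1$ gives exactly the summand $\frac{\log p}{p+1}\cdot\frac{\lambda(p^{2k+2})-\lambda(p^{2k})}{p^{(k+1)(1+2\pi i\tau/L)}}$ appearing in the claim. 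The third step is to control the tail/error: the difference between $\frac{1}{X^*}\#\{d\le X: p\mid d, d\in\mathcal{F}(X)\}$ and $\frac{1}{p+1}$ contributes an error which, summed against $\frac{\log p}{p^k}$ over $p$ and $k$, must be shown to be $O(X^{1/2}\log\log X)$; since $\widehat{g}$ has compact support in $(-\sigma,\sigma)$, the prime sum is effectively restricted to $p^k \ll X^\sigma$ (in fact $p^k\le X$ since only $k=1$ survives the support condition once $\sigma<2$... more precisely $p^{k}< $ something like $X^{\sigma}$), so there are $O(X)$ primes contributing and each carries an error $O(\sqrt{X})$ from the count — this is where the $X^{1/2}$ comes from, with the $\log\log X$ from summing $\log p / p$ over the relevant range combined with the divisor-type bookkeeping.

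The main obstacle I anticipate is getting the local density at $p$ exactly right — that is, verifying that the proportion of $d\in\mathcal{F}(X)$ with $p\mid d$ really produces the clean factor $\frac{1}{p+1}$ rather than something like $\frac{1}{p}$ or $\frac{1}{p-1}$, once one correctly accounts for (i) the restriction to \emph{fundamental} (hence squarefree-away-from-$4$) discriminants, (ii) the parity/congruence conditions defining $\mathcal{F}(X)$ (evenness of $d$ and the sign condition $\chi_d(-M)\omega_E=1$), and (iii) the interaction of the congruence $p\mid d$ with those conditions. I would handle this by expressing $\#\{d\in\mathcal{F}(X): p\mid d\}$ and $X^* = \#\mathcal{F}(X)$ through the same sieve identity and taking the ratio, so that the global normalization constants cancel and only the local factor at $p$ remains; the bad prime $M$ and the prime $2$ need separate, easy treatment. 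A secondary, more routine obstacle is justifying the interchange of the (infinite) $k$-sum, $p$-sum, and $d$-sum and the passage to the integral, but the compact support of $\widehat{g}$ makes all these sums effectively finite, so absolute convergence is not an issue and Fubini applies directly.
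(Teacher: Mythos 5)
Your proposal follows essentially the same route as the paper: swap the order of summation, invoke a lemma (proved by a M\"obius-sieve count, exactly as you describe) giving $\#\{d\in\mathcal{F}(X):p\mid d\} = \frac{X^*}{p+1}+O(X^{1/2})$ for $p\nmid M$ and $0$ for $p\mid M$, then unwind $\widehat{g}$, apply the Hecke relation $\alpha_p^{2k}+\beta_p^{2k}=\lambda(p^{2k})-\lambda(p^{2k-2})$, and reindex. One minor correction: you say ``$M$ could divide $d$,'' but the defining condition $\chi_d(-M)\omega_E=1$ forces $(d,M)=1$, so the $p\mid M$ count is exactly zero rather than merely negligible.
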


\begin{proof} Recall $S_{\rm even, 2}$ is
\be
S_{\rm even,2} \ = \  \frac{1}{L X^*} \sum_{d\in\mathcal{F}( X)}\sum_{k = 1}^\infty \sum_{p|d} \frac{(\alpha_p^{2k} + \beta_p^{2k}) \log p}{p^k} \widehat{g}\left(\frac{\log p^k}{L} \right),
\ee
and a change of order of summation gives
\be \label{sixtythree}
S_{\rm even,2} \ = \  \frac{1}{L X^*} \sum_{p}\sum_{k = 1}^\infty \frac{(\alpha_p^{2k} + \beta_p^{2k}) \log p}{p^k} \widehat{g}\left(\frac{\log p^k}{L} \right) \sum_{\substack{d\in\mathcal{F}( X) \\ p|d}} 1.
\ee
From Lemma \ref{lem:numdwithpdivided} we find that
\be \label{NumberFundamental}
\twocase{\sum_{\substack{d\in\mathcal{F}(X) \\ p|d}} 1 \ = \
}{\frac{X^*}{p + 1} + O(X^{1/2})}{if $p \nmid  M$}{0}{if $p | M$.}
\ee

Using \eqref{NumberFundamental} in \eqref{sixtythree} yields
\be \label{Seventwohelpone}
S_{\rm even,2} \ = \  \frac{1}{L} \sum_{p\nmid M}\sum_{k = 1}^\infty \frac{(\alpha_p^{2k} + \beta_p^{2k}) \log p}{p^k(p + 1)} \widehat{g}\left(\frac{\log p^k}{L} \right)+ O(X^{1/2}\log\log X).
\ee

Substituting \be
\widehat{g} \left(\frac{\log p^k}{L} \right)\ =\ \int_{-\infty}^\infty g(\tau)e^{-2\pi i\tau \frac{\log p^k}{L}}d\tau
\ = \  \int_{-\infty}^\infty g(\tau)p^{-\frac{2\pi i \tau}{L}k}d\tau
\ee
into \eqref{Seventwohelpone} yields
\begin{align} \nonumber
S_{\rm even,2} & \ = \  \frac{1}{L} \sum_{p\nmid M}\sum_{k = 1}^\infty \frac{(\alpha_p^{2k} + \beta_p^{2k}) \log p}{p^k(p + 1)} \int_{-\infty}^\infty g(\tau)p^{-\frac{2\pi i \tau}{L}k}d\tau + O(X^{1/2}\log\log X) \\ \nonumber
& \ = \  \frac{1}{L} \sum_{p\nmid M}\sum_{k = 1}^\infty \frac{(\alpha_p^{2k} + \beta_p^{2k}) \log p}{p^k(p + 1)} \int_{-\infty}^\infty g(\tau)p^{-\frac{2\pi i \tau}{L}k}d\tau + O(X^{1/2}\log\log X) \\
& \ = \  \frac{1}{L} \int_{-\infty}^\infty g(\tau) \sum_{p\nmid M} \frac{\log p}{(p + 1)} \sum_{k = 1}^\infty \frac{(\alpha_p^{2k} + \beta_p^{2k}) }{p^{k(1 + \frac{2 \pi i \tau}{L})}} d\tau + O(X^{1/2}\log\log X).
\end{align}

For $p \nmid M$ we have
\be
\alpha_p^{2k} + \beta_p^{2k} \ = \  \lambda(p^{2k}) - \lambda(p^{2k - 2}),
\ee
thus
\begin{align} \nonumber
S_{\rm even,2} & \ = \  \frac{1}{L} \int_{-\infty}^\infty g(\tau) \sum_{p\nmid M} \frac{\log p}{(p + 1)} \sum_{k = 1}^\infty \frac{\lambda(p^{2k}) - \lambda(p^{2k - 2}) }{p^{k(1 + \frac{2 \pi i \tau}{L})}} d\tau + O(X^{1/2}\log\log X) \\
& \ = \  \frac{1}{L} \int_{-\infty}^\infty g(\tau) \sum_{p\nmid M} \frac{\log p}{(p + 1)} \sum_{k = 0}^\infty \frac{\lambda(p^{2k + 2}) - \lambda(p^{2k}) }{p^{(k + 1)(1 + \frac{2 \pi i \tau}{L})}} d\tau + O(X^{1/2}\log\log X).
\end{align}
\end{proof}

%%%%%%%%%%%%%%%%%%%%%%%%%%%%%%%%%%%%%%%%%%%%%%%%%%%%%%%%%%%%%%%%%%%%%
%%%%%%%%%%%%%%%%%%%%%%%%%% S_ODD %%%%%%%%%%%%%%%%%%%%%%%%%%%%%%%%%%%%
%%%%%%%%%%%%%%%%%%%%%%%%%%%%%%%%%%%%%%%%%%%%%%%%%%%%%%%%%%%%%%%%%%%%%
%%%%%%%%%%%%%%%%%%%%%%%%%%%%%%%%%%%%%%%%%%%%%%%%%%%%%%%%%%%%%%%%%%%%%

\subsection{Analysis of $S_{\rm odd}$}\label{sec:analysisSoddJutila}

We now analyze $S_{\rm odd}$ by applying Theorem \ref{thm:generalizedJutila}, which generalizes Jutila's bound. In the sums below, $M$ is an odd prime and $d$ is an even fundamental discriminant congruent to a non-zero square modulo $M$. We modify the analysis of $S_{\rm odd}$ from \cite{Mil4}, where the $S_{{\rm odd}}$ term is now \be S_{\rm odd}\ =\ -\frac{1}{X^*} \sum_{d\in\mathcal{F}( X)} \sum_{k = 0}^\infty \sum_p \frac{(\alpha_p^{2k + 1} + \beta_p^{2k +1})\chi_d(p) \log p}{p^{(2k+1)/2} L} \widehat{g}\left(\frac{\log p^{2k+1}}{2L} \right),
\ee with the $d$-sum over fundamental discriminants such that $d$ equals a non-zero square modulo $M$. If $p \nmid M$ then $\alpha_p^{2k+1} + \beta_p^{2k+1} = \lambda_E(p^{2k+1}) - \lambda_E(p^{2k-1})$, provided we set $\lambda_E(p^{-1}) = 0$; if $p|M$ then $\beta_p = 0$, $\alpha_p = \lambda_E(p)$ and therefore $\alpha_p^{2k+1} = \lambda_E(p)^{2k+1}$. Thus we may re-write our sum as \bea S_{\rm odd}& \ =\ & -\frac{1}{X^*}  \sum_{k = 0}^\infty \sum_{p\nmid M} \frac{(\lambda_E(p^{2k+1}) - \lambda_E(p^{2k-1})) \log p}{p^{(2k+1)/2} L} \widehat{g}\left(\frac{\log p^{2k+1}}{2L} \right) \sum_{d\in\mathcal{F}( X) \atop d \equiv \Box \neq 0 \bmod M} \chi_d(p) \nonumber\\ & & \ -\frac{1}{X^*}  \sum_{k = 0}^\infty \sum_{p|M} \frac{\lambda_E(p)^{2k+1} \log p}{p^{(2k+1)/2} L} \widehat{g}\left(\frac{\log p^{2k+1}}{2L} \right) \sum_{d\in\mathcal{F}( X) \atop d \equiv \Box \neq 0 \bmod M} \chi_d(p).
\eea

\begin{lem}  We have \bea\label{eq:Soddwithlowerorderterm}\label{Soddfinal}  S_{\rm odd} = -\frac1{L} \int_{-\infty}^\infty g(\tau)
\Bigg[
\sum_{k=0}^\infty  \frac{\log M}{M^{\frac{2k+1}{2}(2 + 2\frac{\pi i \tau}{L})}}
\Bigg] d \tau + \ O_M\left(X^{-\frac{1-\sigma}2}  \log^6 X\right). \eea
\end{lem}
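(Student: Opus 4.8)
The plan is to split $S_{\rm odd}$ into the contribution from primes $p \mid M$ (a single prime, $p = M$) and the contribution from primes $p \nmid M$, and to show that the former produces the explicit main term in \eqref{Soddfinal} while the latter is absorbed into the error term via Theorem \ref{thm:generalizedJutila}. For the $p = M$ piece, I would first observe that the inner character sum $\sum_{d \in \mathcal{F}(X),\, d \equiv \Box \neq 0 \bmod M} \chi_d(M)$ is \emph{not} small: since $d$ is a nonzero square mod $M$, $\chi_d(M) = \left(\frac{d}{M}\right)$ is essentially constant (equal to $+1$ after accounting for the sign conventions on the Kronecker symbol and the even-discriminant and root-number constraints defining $\mathcal{F}(X)$), so this sum is $\sim X^*$. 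Hence the $p = M$ term contributes $-\frac{1}{L}\sum_{k=0}^\infty \frac{\lambda_E(M)^{2k+1}\log M}{M^{(2k+1)/2}} \widehat g\!\left(\frac{\log M^{2k+1}}{2L}\right)$ plus a lower-order error; then I would use $\lambda_E(M) = \alpha_M = \omega_E/M^{1/2}$ (the bad-prime Hecke eigenvalue), so $\lambda_E(M)^{2k+1}/M^{(2k+1)/2} = \omega_E^{2k+1}/M^{2k+1} = \pm M^{-(2k+1)}$, and unwind the Fourier transform exactly as in the proof of the earlier lemma on $S_{\rm even,1,1}$ to get $-\frac{1}{L}\int g(\tau)\sum_{k\ge0}\frac{\log M}{M^{\frac{2k+1}{2}(2 + 2\pi i\tau/L)}}\,d\tau$, matching \eqref{Soddfinal}.

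For the $p \nmid M$ piece, the strategy is to interchange the order of summation so that the inner sum is the character sum $T(p) := \sum_{d \in \mathcal{F}(X),\, d \equiv \Box \neq 0 \bmod M} \chi_d(p)$, and to bound it using the generalized Jutila bound (Theorem \ref{thm:generalizedJutila}), which controls $\sum_{d \le X} \chi_d(p)$ over fundamental discriminants restricted to a fixed nonzero square class mod the squarefree modulus $M$. The restriction $\widehat g \subset (-\sigma,\sigma)$ means the $p$-sum is truncated at $p^{2k+1} \ll X^{2\sigma}$ (more precisely $\log p^{2k+1} < 2\sigma L$, i.e.\ $p^{2k+1} < (\sqrt M X/2\pi)^{2\sigma}$). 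The $k=0$ term, where $p$ ranges up to roughly $X^{2\sigma}$ with weight $\log p/\sqrt p$, is the dominant one; for $k \ge 1$ the range of $p$ shrinks and the extra powers of $p^{-1/2}$ in the denominator give geometric savings, so the whole $k\ge1$ tail is lower order. I would therefore apply Cauchy--Schwarz (or directly the Jutila-type bound summed over $p$) to $\frac{1}{X^*}\sum_{p\nmid M,\, p^{2k+1}\ll X^{2\sigma}} \frac{\log p}{p^{(2k+1)/2}}\, |T(p)|$, using $\lambda_E(p^{2k+1}) - \lambda_E(p^{2k-1}) = O_\epsilon(p^{(2k+1)(1/2+\epsilon)})$ (Deligne's bound) to control the Hecke weights, and conclude that this piece is $O_M(X^{-(1-\sigma)/2}\log^6 X)$, where the $\log^6 X$ collects the logarithmic losses from the Jutila bound and from $L \asymp \log X$ appearing in the denominator.

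The main obstacle I anticipate is the estimation of the $p \nmid M$ sum: getting the clean power saving $X^{-(1-\sigma)/2}$ (rather than something weaker) requires applying Theorem \ref{thm:generalizedJutila} carefully, including its dependence on the modulus $M$ and on $p$, and then summing the resulting bound against the weights $\log p/p^{(2k+1)/2}$ over the truncated range $p^{2k+1} < (\sqrt M X/2\pi)^{2\sigma}$ in a way that the length of the $d$-sum ($\sim X$) and the normalization by $X^*$ combine correctly — this is exactly the step where the Jutila-type large-sieve/mean-value input is essential and where the restriction to $\sigma < 1$ enters. A secondary, more bookkeeping, obstacle is pinning down the exact value (sign) of the near-constant character sum $\sum_{d \equiv \Box \neq 0 \bmod M}\chi_d(M)$ given the parity and root-number constraints built into $\mathcal{F}(X)$, so that the $p=M$ term reproduces \eqref{Soddfinal} with the correct sign; I expect this to follow from quadratic reciprocity together with the definition \eqref{eq:familydefinition} of the family, with any error in counting being $O(X^{1/2})$ and hence harmless after division by $X^*$.
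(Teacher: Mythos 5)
Your proposal is correct and follows essentially the same route as the paper: split off the $p=M$ term (where $\chi_d(M)=1$ identically on $\mathcal{F}(X)$, producing the main term once you substitute $\lambda_E(M)=\omega_E/M^{1/2}$ with $\omega_E=1$ and unwind $\widehat g$ exactly as for $S_{\rm even,1,1}$), and kill the $p\nmid M$ piece by Cauchy--Schwarz plus Theorem~\ref{thm:generalizedJutila}. The one small simplification over what you anticipate is that no quadratic reciprocity or bookkeeping is needed for the sign: since $M$ is prime, $\chi_d(M)=\bigl(\tfrac{d}{M}\bigr)$ is the Legendre symbol and equals $1$ exactly on the residue classes defining the family, and $\omega_E^{2k+1}=\omega_E=1$ by the even functional equation assumption.
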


\begin{proof} We write $S_{\rm odd}$ as $S_{\rm odd}(p\nmid M) + S_{\rm odd}(p|M)$. We first analyze $S_{\rm odd}(p|M)$, the contribution from $M$. As $d = \Box \not\equiv 0 \bmod M$, $\chi_d(M) = \lag{d}{M} = 1$. The $d$-sum is just $X^\ast$, and hence these terms contribute \be -\sum_{k=0}^\infty  \frac{\lambda_E(M)^{2k+1} \log M}{M^{(2k+1)/2} L} \widehat{g}\left(\frac{\log M^{2k+1}}{2L} \right). \ee

We apply Cauchy-Schwartz to $S_{\rm odd}(p \nmid M)$, and from Theorem \ref{thm:generalizedJutila} (our generalization of Jutila's bound) find \bea\label{eq:soddapplyJ} \left|S_{{\rm odd}}(p\nmid M)\right| &\ \le \ &
\frac1{X^\ast}\left(\sum_{\ell=0}^\infty \sum_{p^{2\ell+1} \le
X^\sigma \atop p \nmid M} \left|\frac{\log p}{p^{(2\ell+1)/2} \log X}\
\hg\left(\frac{\log p^{2\ell+1}}{\log
X}\right)\right|^2\right)^{1/2} \nonumber\\ & & \ \ \cdot \
\left(\sum_{\ell=0}^\infty \sum_{p^{2\ell+1} \le X^\sigma \atop (p,M) = 1}
\left|\sum_{d\le X \atop d \equiv \Box \neq 0 \bmod M} \chi_d(p)\right|^2\right)^{1/2} \nonumber\\ &\ll
& \frac1{X^\ast}\left(\sum_{n \le X^\sigma} \frac1{n}\right)^{1/2}
\cdot X^{\frac{1+\sigma}2} \log^5 X \nonumber\\ &\ll &
X^{-\frac{1-\sigma}2}  \log^6 X; \eea thus there is a power savings
if $\sigma < 1$.

We substitute for $\widehat{g}((\log M^{2k+1})/2L)$ its expansion as an integral, and find\begin{equation}
S_{\rm odd} = -\frac1{L} \int_{-\infty}^\infty g(\tau)
\Bigg[
\sum_{k=0}^\infty  \frac{\lambda_E(M)^{2k+1} \log M}{M^{\frac{2k+1}{2}(1 + 2\frac{\pi i \tau}{L})}}
\Bigg] d \tau + \ O_M\left(X^{-\frac{1-\sigma}2}  \log^6 X\right).
\end{equation}

For $p | M$ we have
\begin{equation}
\lambda_E(p)\ =\ \omega_E/p^{1/2} \Rightarrow \lambda_E(M)^{2k+1}\ =\ \frac{\omega_E}{M^{\frac{2k+1}2}} \ = \ \frac{1}{M^{\frac{2k+1}2}}
\end{equation}
since our elliptic curve $E$ has even functional equation. Thus
\begin{equation}
S_{\rm odd} = -\frac1{L} \int_{-\infty}^\infty g(\tau)
\Bigg[
\sum_{k=0}^\infty  \frac{\log M}{M^{\frac{2k+1}{2}(2 + 2\frac{\pi i \tau}{L})}}
\Bigg] d \tau + \ O_M\left(X^{-\frac{1-\sigma}2}  \log^6 X\right).
\end{equation} \end{proof}

%%%%%%%%%%%%%%%%%%%%%%%%%%%%%%%%%%%%%%%%%%%%%%%%%%%%%%%%%%%%%%%
%%%%%%%%%%%%%%%%%%%%% S TOGETHER %%%%%%%%%%%%%%%%%%%%%%%%%%%%%%
%%%%%%%%%%%%%%%%%%%%%%%%%%%%%%%%%%%%%%%%%%%%%%%%%%%%%%%%%%%%%%%
\subsection{Proof of Theorem \ref{thm:numbertheoryresult}}

\begin{proof}[Proof of Theorem \ref{thm:numbertheoryresult}] The proof of \eqref{eq:numbtheoryresulteq} follows by collecting the above lemmata and
noticing that from equation (\ref{Sevenoneonefinal}) for $S_{\rm even,1,1}$ and
equation (\ref{Soddfinal}) for $S_{\rm odd}$ we have
\begin{align}
S_{\rm even,1,1} + S_{\rm odd} & = -\frac{1}{L} \sum_{k=1}^\infty \int_{-\infty}^\infty g(\tau) \frac{\log M}{M^{2k(1 + \frac{\pi i \tau}{L})}} d \tau \nonumber \\ \nonumber
&\ \ \ \ \  -\ \frac1{L} \int_{-\infty}^\infty g(\tau)
\Bigg[
\sum_{k=0}^\infty  \frac{\log M}{M^{\frac{2k+1}{2}(2 + 2\frac{\pi i \tau}{L})}}
\Bigg] d \tau + O_M\left(X^{-\frac{1-\sigma}2}  \log^6 X\right)\\
& = -\frac{1}{L} \sum_{k=0}^\infty \int_{-\infty}^\infty g(\tau) \frac{\log M}{M^{(k+1)(1 + \frac{\pi i \tau}{L})}} d \tau
+ O_M\left(X^{-\frac{1-\sigma}2}  \log^6 X\right).
\end{align}
\end{proof}

%%%%%%%%%%%%%%%%%%%%%%%%%%%%%%%%%%%%%%%%%%%%%%%%%%%%%%%%%%%%%%%%%%%%%%%%%%%%%%%%%%%%%%%%%%%%%%%%%%%%%%%%%%%%%%%%
%%%%%%%%%%%%%%%%% A_DERIVATIVE %%%%%%%%%%%%%%%%%%%%%%%%%%%%%%%%%%%%%%%%%%%%%%%%%%%%%%%%%%%%%%%%%%%%%%%%%%%%%%%%%
%%%%%%%%%%%%%%%%%%%%%%%%%%%%%%%%%%%%%%%%%%%%%%%%%%%%%%%%%%%%%%%%%%%%%%%%%%%%%%%%%%%%%%%%%%%%%%%%%%%%%%%%%%%%%%%%

\section{The Ratios Conjecture's Prediction} \label{derivativeofAE}

The purpose of this section is to prove Theorem \ref{thm:ntequalsratios}, specifically that if $\supp(\hphi) \subset (-\sigma, \sigma)$ then the Ratios' prediction agrees with number theory up to errors of size $O(X^{-(1-\sigma)/2})$. The starting point in the analysis is the following expansion for the Ratios Conjecture's prediction:

\begin{thm}[Theorem 2.3 and equation (3.11) in \cite{HKS}]
With notation as in Theorem \ref{thm:numbertheoryresult}, the prediction from the Ratios Conjecture for the
one-level density of the family ${\mathcal F}(X)$ of even quadratic twists of an elliptic curve L-function $L_E(s)$
of even functional equation by even fundamental discriminants at most $X$ is
\begin{eqnarray}
&& \frac{1}{X^*}\sum_{d \in {\mathcal F}(X)} \sum_{\gamma_d}g\Big(\frac{\gamma_d
L}{\pi}\Big)\nonumber
\\ \nonumber && =~ \frac{1}{2LX^*} \int_{-\infty}^\infty g(\tau) \sum_{d \in {\mathcal F}(X)} \Bigg[ 2\log
\left(\frac{\sqrt{M}|d|}{2\pi} \right) +
\frac{\Gamma'}{\Gamma}\Big(1 + \frac{i\pi \tau}{L}\Big)
\\ \nonumber &&~~~ + \frac{\Gamma'}{\Gamma}\Big(1 - \frac{i \pi \tau}{L}\Big) +
2\Big[-\frac{\zeta'(1 + \frac{2 i \pi \tau}{L})}{\zeta(1 + \frac{2
i \pi \tau}{L})} + \frac{L_E'(\sym^2, 1 + \frac{2 i \pi
\tau}{L})}{L_E(\sym^2, 1+\frac{2 i \pi \tau}{L})} +
A_E^1\Big(\frac{i \pi \tau}{L}, \frac{i \pi \tau}{L}\Big)
\\ \nonumber &&~~~ -\bigg(\frac{\sqrt{M}|d|}{2\pi}\bigg)^{-2i \pi \tau / L}
\frac{\Gamma(1 - \frac{i \pi \tau}{L})}{\Gamma(1 + \frac{i \pi
\tau}{L})} \frac{\zeta(1 + \frac{2 i \pi \tau}{L})L_E(\sym^2, 1 -
\frac{2 i \pi \tau}{L})}{L_E(\sym^2,1)}
%\nonumber&&~~~\qquad\qquad\qquad\qquad\qquad\qquad\qquad\qquad\qquad\qquad
 \times A_E\Big(-\frac{i \pi
\tau}{L}, \frac{i \pi \tau}{L}\Big)\Big] \Bigg]d\tau \\ &&~~~ + O(X^{-1/2+\varepsilon}).
\end{eqnarray}
where $A_E$ is defined in \eqref{definitionofAE} and $\dalpha A_E(\alpha, \gamma)|_{\alpha = \gamma = r} = A_E^1(r,r)$.
\end{thm}

Much of the expansion above is already found in our number theory result, Theorem \ref{thm:numbertheoryresult}. The proof of Theorem \ref{thm:ntequalsratios} is thus reduced to determining the contribution from the $A_E$ and $A_E^1$ terms, which we now proceed to do in the lemmata below. We first derive useful expressions for these pieces and the related quantities that arise in the analysis. Similar to \cite{Mil4}, the proof is completed by bounding the contribution of the resulting Euler product by shifting contours.

\subsection{Analysis of $A_E^1$}\label{sec:analysisAE1}

Before determining the contribution of $A_E^1$ we first obtain a useful expansion for it. The Euler product $A_E(\alpha, \gamma)$ is given by
\begin{eqnarray}\label{definitionofAE}
& & A_E(\alpha, \gamma) \nonumber\\
& = & Y_E^{-1}(\alpha, \gamma)\times
\prod_{p|M}\Bigg(\sum_{m= 0}^{\infty}\bigg( \frac{\lambda(p^m)\omega_E^{m}}
{p^{m(1/2 + \alpha)
}}-\frac{\lambda(p)}{p^{1/2+\gamma}}\frac{\lambda(p^m)\omega_E^{m+1}}{p^{m(1/2+\alpha)}}\bigg)\Bigg) \times
\nonumber\\ & &
\prod_{p\nmid M} \left(1 + \frac{p}{p + 1}\left(\sum_{m =
1}^\infty \frac{\lambda(p^{2m})}{p^{m(1 + 2\alpha)}}
-\frac{\lambda(p)}{p^{1 + \alpha + \gamma}}\sum_{m =
0}^\infty \frac{\lambda(p^{2m+1})}{p^{m(1 + 2\alpha)}} +
\frac{1}{p^{1 + 2\gamma}}\sum_{m = 0}^\infty
\frac{\lambda(p^{2m})}{p^{m(1 + 2\alpha)}} \right) \right)
\end{eqnarray}
where \begin{equation}\label{eq:YE}
 Y_E(\alpha, \gamma) = \frac{\zeta(1 +
2\gamma) L_E(\sym^2, 1 + 2\alpha)}{\zeta(1 + \alpha + \gamma)
L_E(\sym^2, 1+ \alpha+\gamma)}.
\end{equation}
Note that
\be
A_E(r,r) = 1.
\ee
Rewriting $A_E(\alpha, \gamma)$ gives
\begin{align*}
A_E(\alpha, \gamma) = & \prod_{p | M}
\left(1 - \frac{1}{p^{1 + 2\gamma}}\right)
\left(1 - \frac{\lambda(p)^2}{p^{1 + 2 \alpha}} \right)
\left(1 - \frac{1}{p^{1 + \alpha + \gamma}} \right)^{-1}
\left(1 - \frac{\lambda(p)^2}{p^{1 + \alpha + \gamma}}\right)^{-1}\\
& \times \Bigg(\sum_{m= 0}^{\infty}\bigg( \frac{\lambda(p^m)\omega_E^{m}}
{p^{m(1/2 + \alpha)
}}-\frac{\lambda(p)}{p^{1/2+\gamma}}\frac{\lambda(p^m)\omega_E^{m+1}}{p^{m(1/2+\alpha)}}\bigg)\Bigg)\\
& \prod_{p \nmid M}
\left(1 - \frac{1}{p^{1 + 2\gamma}}\right)
\left(1 - \frac{\lambda(p^2)}{p^{1 + 2\alpha}} + \frac{\lambda(p^2)}{p^{2(1+2\alpha)}} -\frac{1}{p^{3(1+ 2\alpha)}} \right)
\left(1 - \frac{1}{p^{1 + \alpha + \gamma}} \right)^{-1}\\
& \times \left(1 - \frac{\lambda(p^2)}{p^{1 + \alpha + \gamma}} + \frac{\lambda(p^2)}{p^{2(1 + \alpha + \gamma)}} - \frac{1}{p^{3(1 + \alpha + \gamma)}} \right)^{-1}\\
& \times
\left(1 + \frac{p}{p + 1}
\left(\sum_{m = 1}^\infty \frac{\lambda(p^{2m})}{p^{m(1 + 2\alpha)}}
- \frac{\lambda(p)}{p^{1 + \alpha + \gamma}}\sum_{m = 0}^\infty \frac{\lambda(p^{2m+1})}{p^{m(1 + 2\alpha)}}
 + \frac{1}{p^{1 + 2\gamma}}\sum_{m = 0}^\infty \frac{\lambda(p^{2m})}{p^{m(1 + 2\alpha)}}
\right)
\right).\nonumber\\
\end{align*}

We find
\begin{align*}
& \dalpha A_E(\alpha, \gamma)  \\= & A_E(\alpha, \gamma)
\Bigg(
\sum_{p | M} \log p
\Bigg[
\frac{\frac{2 \lambda(p)^2}{p^{1 + 2\alpha}}}{1 - \frac{\lambda(p)^2}{p^{1+2\alpha}}}
%%%%%%
- \frac{\frac{1}{p^{1 + \alpha + \gamma}}}{1 - \frac{1}{p^{1 + \alpha + \gamma}} }
- \frac{\frac{\lambda(p)^2}{p^{1+ \alpha + \gamma}}}{1 - \frac{\lambda(p)^2}{p^{1 + \alpha + \gamma}}}\\
&\ \
+ \frac{- \sum_{m= 0}^{\infty}\bigg( \frac{m\lambda(p^m)\omega_E^{m}}
{p^{m(1/2 + \alpha)
}}-\frac{m\lambda(p)}{p^{1/2+\gamma}}\frac{\lambda(p^m)\omega_E^{m+1}}{p^{m(1/2+\alpha)}}\bigg)}
{\sum_{m= 0}^{\infty}\bigg( \frac{\lambda(p^m)\omega_E^{m}}
{p^{m(1/2 + \alpha)
}}-\frac{\lambda(p)}{p^{1/2+\gamma}}\frac{\lambda(p^m)\omega_E^{m+1}}{p^{m(1/2+\alpha)}}\bigg)}
\Bigg]\\
&\ \ \sum_{p\nmid M}
\log p
\Bigg[
%%%%%%
\frac{\frac{2\lambda(p^2)}{p^{1 + 2\alpha}} - \frac{4\lambda(p^2)}{p^{2(1+2\alpha)}} + \frac{6}{p^{3(1 + 2\alpha)}}}
{1 - \frac{\lambda(p^2)}{p^{1 + 2\alpha}} + \frac{\lambda(p^2)}{p^{2(1+2\alpha)}} - \frac{1}{p^{3(1 + 2\alpha)}}}
- \frac{\frac{1}{p^{1 + \alpha + \gamma}}}{1 - \frac{1}{p^{1 + \alpha + \gamma}} }\\
&\ \ +
\frac{-\frac{\lambda(p^2)}{p^{1 + \alpha + \gamma}} + \frac{2\lambda(p^2)}{p^{2(1+\alpha + \gamma)}} - \frac{3}{p^{3(1 + \alpha + \gamma)}}}
{
1 - \frac{\lambda(p^2)}{p^{1 + \alpha + \gamma}} + \frac{\lambda(p^2)}{p^{2(1+\alpha + \gamma)}} - \frac{1}{p^{3(1 + \alpha + \gamma)}}
}\\
&\ \ + \frac{
\frac{p}{p + 1}\left(- \sum_{m = 1}^\infty \frac{2m\lambda(p^{2m})}{p^{m(1 + 2\alpha)}} + \frac{\lambda(p)}{p^{1 + \alpha + \gamma}}\sum_{m = 0}^\infty \frac{(2m+1)\lambda(p^{2m+1})}{p^{m(1 + 2\alpha)}} - \frac{1}{p^{1 + 2\gamma}}\sum_{m = 0}^\infty \frac{2m\lambda(p^{2m})}{p^{m(1 + 2\alpha)}}
\right)
}
{
\left(1 + \frac{p}{p + 1}\left(\sum_{m = 1}^\infty \frac{\lambda(p^{2m})}{p^{m(1 + 2\alpha)}}
- \frac{\lambda(p)}{p^{1 + \alpha + \gamma}}\sum_{m = 0}^\infty \frac{\lambda(p^{2m+1})}{p^{m(1 + 2\alpha)}} + \frac{1}{p^{1 + 2\gamma}}\sum_{m = 0}^\infty \frac{\lambda(p^{2m})}{p^{m(1 + 2\alpha)}}
\right) \right)
}
\Bigg]
\Bigg).
\end{align*}

Specializing to $\alpha = \gamma = r$ we find that
\begin{align} \nonumber
& \dalpha A_E(\alpha, \gamma)|_{\alpha = \gamma = r} = A_E^1(r,r) \\ \nonumber
& = \sum_{p | M} \log p
\Bigg[
\frac{\frac{2\lambda(p)^2}{p^{1 + 2r}}}{1 - \frac{\lambda(p)^2}{p^{1 + 2r}}} - \frac{\frac{\lambda(p)^2}{p^{1 + 2r}}}{1 - \frac{\lambda(p)^2}{p^{1 + 2r}}}
- \frac{\frac{1}{p^{1 + 2r}}}{1 - \frac{1}{p^{1 + 2r}}}
- \sum_{m = 0}^\infty \frac{\lambda(p^{m+1})\omega_E^{m + 1}}{p^{(m + 1)(1/2 + r)}}
\Bigg] \\ \nonumber
&\ \
+ \sum_{p\nmid M} \log p
\Bigg[
\frac{\frac{2\lambda(p^2)}{p^{1+2r}} - \frac{4\lambda(p^2)}{p^{2(1+2r)}} + \frac{6}{p^{3(1+2r)}}}{1 - \frac{\lambda(p^2)}{p^{1+2r}} + \frac{\lambda(p^2)}{p^{2(1+2r)}} - \frac{1}{p^{3(1+2r)}}}
+ \frac{-\frac{\lambda(p^2)}{p^{1+2r}} + \frac{2\lambda(p^2)}{p^{2(1+2r)}} - \frac{3}{p^{3(1+2r)}}}{1 - \frac{\lambda(p^2)}{p^{1+2r}} + \frac{\lambda(p^2)}{p^{2(1+2r)}} - \frac{1}{p^{3(1+2r)}}}
- \frac{\frac{1}{p^{1 + 2r}}}{1 - \frac{1}{p^{1 + 2r}}}\\ \label{Aonederivative}
&\ \
-\sum_{m = 0}^\infty \frac{\lambda(p^{2m + 2}) - \lambda(p^{2m})}{p^{(m + 1)(1+2r)}}
+\frac{1}{p + 1}
\sum_{m = 0}^\infty \frac{\lambda(p^{2m + 2}) - \lambda(p^{2m})}{p^{(m + 1)(1+2r)}}
\Bigg].
\end{align}
Next, we identity terms in (\ref{Aonederivative}) involving the logarithmic derivatives of $\zeta(s)$ and $L_E(\sym^2,s)$. Simple calculations show
\be \label{derivativelogzeta}
\frac{\zeta'(1 + 2r)}{\zeta(1 + 2r)} = - \sum_p \log p \frac{\frac{1}{p^{1 + 2r}}}{1 - \frac{1}{p^{1 + 2r}}}
\ee
and
\be \label{derivativelogLsym}
\frac{L_E'(\sym^2, 1 + 2r)}{L_E(\sym^2, 1 + 2r)}
= -\sum_{p | M} \log p \frac{\frac{\lambda(p)^2}{p^{1 + 2r}}}{1 - \frac{\lambda(p)^2}{p^{1 + 2r}}}
- \sum_{p\nmid M} \log p \frac{\frac{\lambda(p^2)}{p^{1 + 2r}} - \frac{2 \lambda(p^2)}{p^{2(1+2r)}} + \frac{3}{p^{3(1+2r)}}}{1 - \frac{\lambda(p^2)}{p^{1 + 2r}} + \frac{\lambda(p^2)}{p^{2(1 + 2r)}} - \frac{1}{p^{3(1 + 2r)}}}.
\ee

Also note that
\be
\frac{\zeta'(1 + 2r)}{\zeta(1 + 2r)}\ =\ -\frac{\tilde{\zeta}'(1 + 2r)}{\tilde{\zeta}(1 + 2r)}
\ee
where
\be
\tilde{\zeta}(s)\ =\ \zeta^{-1}(s);
\ee
similarly we have
\be
\frac{L_E'(\sym^2, 1 + 2r)}{L_E(\sym^2, 1 + 2r)} = -\frac{\tilde{L}_E'(\sym^2, 1 + 2r)}{\tilde{L}_E(\sym^2, 1 + 2r)}
\ee
where
\be
\tilde{L}_E(\sym^2, 1 + 2r)\ =\ L_E^{-1}(\sym^2, 1 + 2r).
\ee

Using \eqref{derivativelogzeta} and \eqref{derivativelogLsym} in \eqref{Aonederivative} yields
\begin{eqnarray}
A_E^1(r,r) & = & -2\frac{L_E'(\sym^2, 1 + 2r)}{L_E(\sym^2, 1 + 2r)} + \frac{L_E'(\sym^2, 1 + 2r)}{L_E(\sym^2, 1 + 2r)}  + \frac{\zeta'(1 + 2r)}{\zeta(1 + 2r)} \nonumber\\
& & \ \ \ \ -\sum_{p | M} \log p \sum_{m = 0}^\infty \frac{\lambda(p^{m+1})\omega_E^{m + 1}}{p^{(m + 1)(1/2 + r)}} \nonumber\\
& &\ \ \ \ +\sum_{p\nmid M} \log p
\Big[
-\sum_{m = 0}^\infty \frac{\lambda(p^{2m + 2}) - \lambda(p^{2m})}{p^{(m + 1)(1+2r)}}
+\frac{1}{p + 1}
\sum_{m = 0}^\infty \frac{\lambda(p^{2m + 2}) - \lambda(p^{2m})}{p^{(m + 1)(1+2r)}}
\Big].\nonumber
\end{eqnarray}
Hence
\begin{align} \nonumber
A_E^1(r,r) & = -\frac{L_E'(\sym^2, 1 + 2r)}{L_E(\sym^2, 1 + 2r)}  + \frac{\zeta'(1 + 2r)}{\zeta(1 + 2r)}
 -\sum_{p | M} \log p \sum_{m = 0}^\infty \frac{\lambda(p^{m+1})\omega_E^{m + 1}}{p^{(m + 1)(1/2 + r)}} \\
 \label{AoneTermUmgeschrieben}
&\ \ +\sum_{p\nmid M} \log p
\Big[
-\sum_{m = 0}^\infty \frac{\lambda(p^{2m + 2}) - \lambda(p^{2m})}{p^{(m + 1)(1+2r)}}
+\frac{1}{p + 1}
\sum_{m = 0}^\infty \frac{\lambda(p^{2m + 2}) - \lambda(p^{2m})}{p^{(m + 1)(1+2r)}}
\Big].
\end{align}

\kommentar
{
Notice that for $p\nmid M$ we have
\be \label{helpone}
\alpha_p^{2k + 2} + \beta_p^{2k + 2} = \lambda(p^{2k+ 2}) - \lambda(p^{2k}),
\ee
for $p|M$ we have
\be \label{helptwo}
\lambda(p) = p^{-1/2},
\ee
and
\be \label{helpthree}
\frac{L_E'(\sym^2, s)}{L_E(\sym^2, s)} = -\sum_{p\nmid M} \log p \sum_{k = 1}^\infty \frac{\alpha_p^{2k} + \beta_p^{2k} + 1}{p^{sk}} - \sum_{p | M} \log p \sum_{k = 1}^\infty \frac{\alpha_p^{2k} + \beta_p^{2k}}{p^{sk}}.
\ee
}

\begin{lem}[Contribution of $A_E^1$]\label{lem:contrAE1} We have
\begin{align} \nonumber
& \frac{1}{LX^*} \int_{-\infty}^\infty g(\tau) \sum_{d \in \mathcal{F}(X)} A_E^1\left(\frac{i \pi \tau}{L},\frac{i \pi \tau}{L}\right) d\tau \\
& = \frac{1}{L} \int_{-\infty}^\infty g(\tau)\left( -\sum_{p | M} \log p \sum_{m = 0}^\infty \frac{1}{p^{(m + 1)(1 + r)}}\right.\nonumber\\
& \,\,\,\,\,\,\,\,\,\,\,\,\,\,\,\,\,\,\,\,\,\,\,\,\,\,\,\,\,\,\,\,\,\,\,\,\,\,\,\,\left.+ \sum_{p\nmid M}  \frac{\log p}{p + 1}
\sum_{k = 0}^\infty \frac{\lambda(p^{2k + 2}) - \lambda(p^{2k})}{p^{(k + 1)(1+\frac{2i \pi \tau}{L})}} -\sum_{\ell=1}^\infty\frac{(M^\ell-1)\log M}{M^{(2r+2)\ell}}\right)d\tau.\label{finalAEoneInPrediction}
\end{align}
\end{lem}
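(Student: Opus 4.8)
The plan is to exploit the fact that the expression \eqref{AoneTermUmgeschrieben} for $A_E^1(r,r)$ is independent of the discriminant $d$. Hence the inner sum $\sum_{d \in \mathcal{F}(X)} A_E^1(i\pi\tau/L, i\pi\tau/L)$ is simply $X^{*}\cdot A_E^1(i\pi\tau/L, i\pi\tau/L)$, the prefactor $1/X^{*}$ cancels, and the left-hand side of \eqref{finalAEoneInPrediction} collapses to $\frac{1}{L}\int_{-\infty}^\infty g(\tau)\,A_E^1(i\pi\tau/L, i\pi\tau/L)\,d\tau$. It therefore suffices to show that, writing $r = i\pi\tau/L$, the expression \eqref{AoneTermUmgeschrieben} for $A_E^1(r,r)$ equals the parenthetical integrand on the right of \eqref{finalAEoneInPrediction}, and I would do this by matching the three groups of terms.

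First, for the bad-prime piece I would use that $E$ has multiplicative reduction at each $p\mid M$ (here $M$ is prime, so this is $p=M$), so that $\lambda(p^{m+1}) = \lambda(p)^{m+1}$, together with $\lambda(p) = \omega_E p^{-1/2}$ and $\omega_E^2 = 1$. Then $\lambda(p^{m+1})\omega_E^{m+1} = (\omega_E p^{-1/2})^{m+1}\omega_E^{m+1} = \omega_E^{2(m+1)}p^{-(m+1)/2} = p^{-(m+1)/2}$, so the bad-prime sum in \eqref{AoneTermUmgeschrieben} becomes $-\sum_{p\mid M}\log p\sum_{m\ge 0}p^{-(m+1)(1+r)}$, which is the first term on the right of \eqref{finalAEoneInPrediction}. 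The good-prime term carrying the weight $1/(p+1)$ in \eqref{AoneTermUmgeschrieben} is literally the second term on the right of \eqref{finalAEoneInPrediction}, so nothing is required there.

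The only term needing an identity is the combination $\frac{\zeta'}{\zeta}(1+2r) - \frac{L_E'}{L_E}(\sym^2,1+2r) - \sum_{p\nmid M}\log p\sum_{m\ge 0}\frac{\lambda(p^{2m+2})-\lambda(p^{2m})}{p^{(m+1)(1+2r)}}$. Here I would invoke the telescoping relation $\alpha_p^{2\ell}+\beta_p^{2\ell} = \lambda(p^{2\ell})-\lambda(p^{2\ell-2})$ for $p\nmid M$ (valid since $\alpha_p\beta_p = 1$; already used in the analysis of $S_{\rm even,2}$), which after reindexing $m = \ell-1$ shows that $\sum_{p\nmid M}\log p\sum_{m\ge 0}\frac{\lambda(p^{2m+2})-\lambda(p^{2m})}{p^{(m+1)s}}$ is exactly the Dirichlet series $\sum_{n\ge 1}\Lambda_E(n)n^{-s}$ appearing in \eqref{sumoflogderivs}; evaluating that formula at $s = 1+2r$ shows it equals $\frac{\zeta'}{\zeta}(1+2r)-\frac{L_E'}{L_E}(\sym^2,1+2r)+\sum_{\ell\ge 1}\frac{(M^\ell-1)\log M}{M^{(2+2r)\ell}}$. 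Substituting this, the $\zeta'/\zeta$ and $L_E'/L_E(\sym^2,\cdot)$ contributions cancel and what remains is $-\sum_{\ell\ge 1}\frac{(M^\ell-1)\log M}{M^{(2+2r)\ell}}$, the third term on the right of \eqref{finalAEoneInPrediction}. Adding the three pieces and inserting $r = i\pi\tau/L$ under $\frac{1}{L}\int g(\tau)\,(\cdots)\,d\tau$ completes the proof.

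There is no substantive obstacle; the one point demanding care is that \eqref{sumoflogderivs} is an identity of Dirichlet series for $\Re(s)>1$, while we are using it on the line $\Re(s)=1$ (with $s = 1+2i\pi\tau/L$), so $\zeta'/\zeta$ and $L_E'/L_E(\sym^2,\cdot)$ must be read via analytic continuation: the latter entire by Shimura's theorem, the former with only a simple pole at $s=1$, that is, at $\tau = 0$. The $\tau$-integral is then still well defined, exactly as in the proof of Lemma \ref{lem:ntsumkevenEC1}, because $g$ is even and the pole at $\tau = 0$ is odd; on the right of \eqref{finalAEoneInPrediction} the sum $\sum_{\ell\ge 1}(M^\ell-1)\log M\,M^{-(2+2r)\ell}$ converges absolutely on this line since $|M^{(2+2r)\ell}| = M^{2\ell}$, and the $m\ge 1$ tails of the good-prime sums converge absolutely by the bound $|\alpha_p| = |\beta_p| = 1$, the conditionally convergent $m=0$ part being precisely what is absorbed into $L_E'/L_E(\sym^2,\cdot)$. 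Everything else is routine bookkeeping.
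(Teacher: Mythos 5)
Your proof is correct and follows essentially the same route as the paper: you reduce to the $d$-free identity $A_E^1(r,r) = -\sum_{p\mid M}\log p\sum_{m\ge 0}p^{-(m+1)(1+r)} + \sum_{p\nmid M}\frac{\log p}{p+1}\sum_{m\ge 0}\frac{\lambda(p^{2m+2})-\lambda(p^{2m})}{p^{(m+1)(1+2r)}} - \sum_{\ell\ge 1}\frac{(M^\ell-1)\log M}{M^{(2r+2)\ell}}$, handle the bad prime by $\lambda(M)=\omega_E/\sqrt{M}$ and $\omega_E^2=1$, and cancel $\zeta'/\zeta$ against $L_E'/L_E(\sym^2,\cdot)$ using the expansion of $\sum_n\Lambda_E(n)n^{-s}$ from \eqref{sumoflogderivs}. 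The only cosmetic difference is that you quote \eqref{sumoflogderivs} wholesale rather than re-expanding the logarithmic derivatives prime by prime as the paper does, which is a slightly tidier presentation of the same cancellation.
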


\begin{proof} The sign $\varepsilon_f$ of a modular form $f$ of weight $k$ and level $M$ is (see equation (3.5) of \cite{ILS})
\be
\varepsilon_f = i^k \mu(M) \lambda(M) \sqrt{M}.
\ee
In our case we denote $\varepsilon_f$ with $\omega_E$. As $k$ is 2 and $M$ is a prime, $i^k = i^2 = -1$ and $\mu(M) = -1$, so
\be
\omega_E = (-1)(-1) \lambda(M) \sqrt{M} \Rightarrow \lambda(M) = \frac{\omega_E}{\sqrt{M}}.
\ee
In particular we obtain for $p|M$ that
\be \label{sign_functionalequation}
\lambda(p^{m + 1}) \omega_E^{m + 1}\ =\ \left(\frac{\omega_E}{p^{1/2}}\right)^{m + 1} \omega_E^{m +1 }\ =\ p^{-(m+1)/2},
\ee
and for $p|M$ we have
\be
\lambda(p) = \frac{\omega_E}{p^{1/2}}.
\ee Hence in \eqref{AoneTermUmgeschrieben} we have
\be
-\sum_{p | M} \log p \sum_{m = 0}^\infty \frac{\lambda(p^{m+1})\omega_E^{m + 1}}{p^{(m + 1)(1/2 + r)}}
= -\sum_{p | M} \log p \sum_{m = 0}^\infty \frac{1}{p^{(m + 1)(1 + r)}}.
\ee

Collecting terms, we find
\begin{eqnarray}\label{almostfinalAEone}
A_E^1(r,r) & = & -\frac{L_E'(\sym^2, 1 + 2r)}{L_E(\sym^2, 1 + 2r)}  + \frac{\zeta'(1 + 2r)}{\zeta(1 + 2r)} \nonumber\\
& & \ \ \ \ -\sum_{p | M} \log p \sum_{m = 0}^\infty \frac{1}{p^{(m + 1)(1 + r)}}
- \sum_{p\nmid M} \log p \sum_{m = 0}^\infty \frac{\lambda(p^{2m + 2}) - \lambda(p^{2m})}{p^{(m + 1)(1+2r)}}\nonumber\\
& &\ \ \ \  + \sum_{p\nmid M}  \frac{\log p}{p + 1}
\sum_{m = 0}^\infty \frac{\lambda(p^{2m + 2}) - \lambda(p^{2m})}{p^{(m + 1)(1+2r)}}
\nonumber\\&=&-\sum_{p | M} \log p \sum_{m = 0}^\infty \frac{1}{p^{(m + 1)(1 + r)}}+ \sum_{p\nmid M}  \frac{\log p}{p + 1}
\sum_{m = 0}^\infty \frac{\lambda(p^{2m + 2}) - \lambda(p^{2m})}{p^{(m + 1)(1+2r)}}+B(r,r),\nonumber\\
\end{eqnarray}
where $B(r,r)$ is the sum of the first pair of terms and the fourth term.  Expanding the logarithmic derivatives\footnote{If $\RRe(r) > 0$ the series converge and the cancelation is justified; the result holds for all $r$ by analytic continuation.} (see Equation \eqref{logderivativesym}, etc.) and using the identity $\lambda(p^{2m})-\lambda(p^{2m-2})=\alpha_p^{2m}+\beta_p^{2m}$, we have
\begin{align}\nonumber
B(r,r) & =  -\frac{L_E'(\sym^2, 1 + 2r)}{L_E(\sym^2, 1 + 2r)}
+ \frac{\zeta'(1 + 2r)}{\zeta(1 + 2r)}
- \sum_{p\nmid M} \log p \sum_{m = 0}^\infty \frac{\lambda(p^{2m + 2}) - \lambda(p^{2m})}{p^{(m + 1)(1+2r)}}\nonumber\\
& = \sum_{p\nmid M}\sum_{\ell=1}^\infty \frac{(\alpha_p^{2\ell} +\beta_p^{2\ell})\log p}{p^{(1+2r)\ell}}+\sum_{p\nmid M}\sum_{\ell=1}^\infty\frac{\log p}{p^{(1+2r)\ell}}+\sum_{p|M}\sum_{\ell=1}^\infty\frac{\log p}{p^{((1+2r)+1)\ell}}\nonumber\\
& \,\,\,\,\,\,\,\,\,\,-\sum_{p}\sum_{\ell=1}^\infty\frac{\log p}{p^{(1+2r)\ell}}- \sum_{p\nmid M} \log p \sum_{m = 1}^\infty \frac{\alpha_p^{2k}+\beta_p^{2k}}{p^{m(1+2r)}}\nonumber\\
& = \sum_{p\nmid M}\log p\sum_{\ell=1}^\infty\frac{\alpha_p^{2\ell} +\beta_p^{2\ell}-\alpha_p^{2\ell} -\beta_p^{2\ell}+1-1}{p^{(1+2r)\ell}}\nonumber\\
& \,\,\,\,\,\,\,\,\,\,-\sum_{p\nmid M}\sum_{\ell=1}^\infty\frac{\log p}{p^{(1+2r)\ell}}+\sum_{p|M}\sum_{\ell=1}^\infty\frac{\log p}{p^{((1+2r)+1)\ell}}\nonumber
\\=&-\sum_{\ell=1}^\infty\frac{(M^\ell-1)\log M}{M^{(2r+2)\ell}}.
\end{align}
This calculation implies that
\begin{align}
A_E^1(r,r) & =  -\sum_{p | M} \log p \sum_{m = 0}^\infty \frac{1}{p^{(m + 1)(1 + r)}}+ \sum_{p\nmid M}  \frac{\log p}{p + 1}
\sum_{m = 0}^\infty \frac{\lambda(p^{2m + 2}) - \lambda(p^{2m})}{p^{(m + 1)(1+2r)}}\nonumber\\
& \,\,\,\,\,\,\,\,\,\,-\sum_{\ell=1}^\infty\frac{(M^\ell-1)\log M}{M^{(2r+2)\ell}} \label{finalAEone}.
\end{align}

We are concerned with the term
\be
\frac{1}{LX^*} \int_{-\infty}^\infty g(\tau) \sum_{d \in \mathcal{F}(X)} A_E^1\left(\frac{i \pi \tau}{L},\frac{i \pi \tau}{L}\right) d\tau \ee from the Ratios' prediction. Using \eqref{finalAEone} yields \eqref{finalAEoneInPrediction}, completing the proof.
\end{proof}

\subsection{Analysis of $A_E$}

Recapping our analysis to date, we have shown the Ratios' prediction is
\begin{eqnarray} \nonumber
&& \frac{1}{X^*}\sum_{d \in \mathcal{F}(X)} \sum_{\gamma_d}g\Big(\frac{\gamma_d
L}{\pi}\Big)
\\ \nonumber
&& =~ \frac{1}{2LX^*} \int_{-\infty}^\infty g(\tau) \sum_{d \in \mathcal{F}(X)}
\Bigg[
2\log
\left(\frac{\sqrt{M}|d|}{2\pi} \right) +
\frac{\Gamma'}{\Gamma}\Big(1 + \frac{i\pi \tau}{L}
\Big) + \frac{\Gamma'}{\Gamma}\Big(1 - \frac{i \pi \tau}{L}\Big)
\Bigg]
d\tau
\\ \nonumber
&& + \frac{1}{L}\intii
g(\tau) \left(-\frac{\zeta'}{\zeta}\left(1+\frac{2\pi i \tau}{L}\right)+ \frac{L_E'}{L_E}\left({\rm sym}^2,1+\frac{2\pi i \tau}{L}\right)-\sum_{\ell=1}^\infty\frac{(M^\ell-1)\log M}{M^{\left(2+\frac{2i\pi\tau}{L}\right)\ell}}\right) d\tau\\ \label{formularfinal}
&& ~~~-\frac{1}{L} \sum_{k=0}^\infty \int_{-\infty}^\infty g(\tau) \frac{\log M}{M^{(k+1)(1 + \frac{\pi i \tau}{L})}} d \tau+\frac{1}{L} \int_{-\infty}^\infty g(\tau) \sum_{p\nmid M} \frac{\log p}{(p + 1)} \sum_{k = 0}^\infty \frac{\lambda(p^{2k + 2}) - \lambda(p^{2k}) }{p^{(k + 1)(1 + \frac{2 \pi i \tau}{L})}} d\tau\nonumber\\
&& -\frac{1}{LX^*} \int_{-\infty}^\infty g(\tau) \sum_{d \in \mathcal{F}(X)}
\Bigg[
\bigg(\frac{\sqrt{M}|d|}{2\pi}\bigg)^{-2i \pi \tau / L}
\frac{\Gamma(1 - \frac{i \pi \tau}{L})}{\Gamma(1 + \frac{i \pi
\tau}{L})} \frac{\zeta(1 + \frac{2 i \pi \tau}{L})L_E(\sym^2, 1 -
\frac{2 i \pi \tau}{L})}{L_E(\sym^2,1)}\nonumber\\
&& \times A_E\Big(-\frac{i \pi
\tau}{L}, \frac{i \pi \tau}{L}\Big)
\Bigg]d\tau  \label{predictionfinal}
 + O(X^{-1/2+\varepsilon}).
\end{eqnarray}
Comparing \eqref{predictionfinal} and the one-level density from number theory (Theorem \ref{thm:numbertheoryresult}),
we see that we have agreement in all but two terms -- first, the constant $g(0)/2$; second, a term from (\ref{predictionfinal}) requiring analysis, namely
\begin{align} \label{AEtermtoanalyse}
&-\frac{1}{LX^*} \int_{-\infty}^\infty g(\tau)
\sum_{d \in \mathcal{F}(X)}
\Bigg[
\bigg(\frac{\sqrt{M}|d|}{2\pi}\bigg)^{-2i \pi \tau / L}
\frac{\Gamma(1 - \frac{i \pi \tau}{L})}{\Gamma(1 + \frac{i \pi
\tau}{L})} \frac{\zeta(1 + \frac{2 i \pi \tau}{L})L_E(\sym^2, 1 -
\frac{2 i \pi \tau}{L})}{L_E(\sym^2,1)}\\ \nonumber
& \times A_E\Big(-\frac{i \pi
\tau}{L}, \frac{i \pi \tau}{L}\Big)
\Bigg]d\tau.
\end{align}

The proof of Theorem \ref{thm:ntequalsratios} is thus reduced to proving

\begin{lem}\label{lem:expansionAEterm} The contribution from the $A_E$ term to the Ratios' prediction, given by \eqref{AEtermtoanalyse}, equals $g(0)/2$ plus an error term bounded by $O(X^{-\frac{1-\sigma}2})$. \end{lem}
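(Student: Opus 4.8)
The plan is to extract the polar contribution from the integrand in \eqref{AEtermtoanalyse} and show that everything else is a genuine power-saving error. First, I would isolate the source of the constant $g(0)/2$: the factor $\zeta(1 + \frac{2i\pi\tau}{L})$ has a simple pole at $\tau = 0$, while $A_E(-\frac{i\pi\tau}{L},\frac{i\pi\tau}{L})$, $\Gamma(1-\frac{i\pi\tau}{L})/\Gamma(1+\frac{i\pi\tau}{L})$, $L_E(\sym^2,1-\frac{2i\pi\tau}{L})/L_E(\sym^2,1)$, and $(\sqrt{M}|d|/2\pi)^{-2i\pi\tau/L}$ all equal $1$ at $\tau = 0$ (recall $A_E(r,r)=1$ and $L_E(\sym^2,s)$ is entire and nonvanishing at $s=1$ by Shimura \cite{Sh}, needed in the denominator). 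Thus, as in the treatment of $S_{{\rm even},1,2}$, shifting the $\tau$-contour (equivalently, shifting the $z$-contour in a Perron-type representation with $z = 1 + \frac{2i\pi\tau}{L}$) past the pole at $\tau = 0$ picks up a residue. Because $g$ is even and Schwartz, the principal-value integral of the remaining analytic part over the real axis is well-defined, and the half-residue from the small semicircle $C_\delta$ contributes exactly $+g(0)/2$ (the sign is opposite to the $S_{{\rm even},1,2}$ case because here $\zeta$ appears in the numerator rather than as $-\zeta'/\zeta$). This accounts for the predicted $g(0)/2$.

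Next, I would bound the shifted integral. After moving the contour to $\Re(z) = 1 + \sigma + \varepsilon$ for a suitable small $\varepsilon$ (or, in $\tau$-variables, a horizontal line a bounded distance into the region of absolute convergence), the Euler product $A_E(\alpha,\gamma)$ must be shown to converge absolutely and be bounded there. Here I would use the explicit rewriting of $A_E(\alpha,\gamma)$ given before \eqref{Aonederivative}: for $p \nmid M$ the local factor is $1 + O(p^{-1-2\Re\alpha} + p^{-1-\Re(\alpha+\gamma)} + p^{-1-2\Re\gamma})$ after the obvious cancellations (the Deligne bound $|\lambda(p^m)| \le m+1$ controls the tails of the inner $m$-sums), while for $p|M$ there are only finitely many (in fact one) bad factors, contributing a bounded constant $O_M(1)$. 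Combined with the polynomial bounds for $\zeta$, $1/\zeta$, $L_E(\sym^2,\cdot)$ on vertical lines just right of $\Re(s)=1$ (these give the $\log$ powers) and the rapid decay of $g(\tau)$, the shifted integral is $O_M(X^{-(1-\sigma)/2}\log^A X)$ for some fixed $A$; absorbing the log powers, it is $O(X^{-(1-\sigma)/2})$. The factor $(\sqrt{M}|d|/2\pi)^{-2i\pi\tau/L}$ has modulus $1$ on the real $\tau$-line and modulus $\ll X^{\text{const}\cdot(\text{shift})}$ after the contour move; choosing the shift proportional to $1-\sigma$ and using $L \asymp \log X$ converts this into the stated $X$-power, exactly as in the parallel argument in \cite{Mil4} and \cite{MilMor}.

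Finally, I would observe that the $d$-average is essentially trivial for this term once the contour is shifted, since the only $d$-dependence is through $(\sqrt{M}|d|/2\pi)^{-2i\pi\tau/L}$, which is uniformly controlled, so $\frac{1}{X^*}\sum_{d}$ just contributes an $O(1)$ factor; no character-sum input (and hence no appeal to the generalized Jutila bound) is needed here, in contrast to $S_{\rm odd}$. The main obstacle is the second step: verifying that the Euler product $A_E(\alpha,\gamma)$ genuinely continues to a holomorphic, polynomially bounded function slightly to the right of the line where the naive series diverges, with the bad Euler factors at $p \mid M$ handled separately — this is precisely where the level $M$ of the elliptic curve complicates the analysis relative to \cite{Mil5}, and where the careful bookkeeping in the rewritten form of $A_E$ (and of $\partial_\alpha A_E$, already recorded above) pays off. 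Everything else is routine contour-shifting and the standard convexity/zero-free-region estimates for $\zeta$ and $L_E(\sym^2,\cdot)$.
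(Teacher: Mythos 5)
Your overall plan---extract the $g(0)/2$ residue from the pole of $\zeta(1 + 2i\pi\tau/L)$ at $\tau = 0$ and bound the remainder by shifting the contour---matches the paper's strategy, and you correctly identify the convergence of the rewritten Euler product (the paper packages it as $T(\tau) = K(\tau) \cdot L_E(\sym^2,1-2i\pi\tau/L)/L_E(\sym^2,1)\cdot\zeta(1+2i\pi\tau/L)$ with $K$ uniformly convergent and $K(0)=1$) as a necessary technical input. However, your treatment of the $d$-sum is a genuine gap. You assert that the $d$-average ``just contributes an $O(1)$ factor'' and that the factor $\left(\sqrt{M}|d|/2\pi\right)^{-2i\pi\tau/L}$ has modulus $\ll X^{\mathrm{const}\cdot(\mathrm{shift})}$ after the contour move. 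But pointwise, after $\tau \mapsto \tau - iwL/2\pi$, the modulus is $\left(\sqrt{M}|d|/2\pi\right)^{-w}$, which for small $|d|$ is $O_M(1)$, not a negative power of $X$. The decay in $X$ can only be seen after averaging over $d$: the paper extracts it via partial summation (Lemma \ref{lem:partialsumdexpdpiX}), which gives $\sum_{d\in\mathcal{F}(X)}\left(\sqrt{M}d/2\pi\right)^{-2\pi i z/L} = X^\ast e^{-2\pi i z}\left(1-\tfrac{2\pi i z}{L}\right)^{-1} + O(X^{1/2-w}\log X)$, and it is the factor $e^{-2\pi i z}=e^{-2\pi i\tau}e^{-wL}\asymp e^{-2\pi i\tau}X^{-w}$ in the main term that supplies the power saving. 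Without this step your argument does not produce the claimed bound.

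The second gap is in the contour shift and the role of GRH. You propose shifting to ``$\Re(z)=1+\sigma+\varepsilon$'' (i.e., $w=\sigma+\varepsilon$), or equivalently ``a bounded distance into the region of absolute convergence.'' These are not the same, and neither yields the exponent: once the $X^{-w}$ from the $d$-sum is in hand, the net saving from a shift by $w$ is $X^{-w}\cdot X^{\sigma w}=X^{-(1-\sigma)w}$, so $w$ must be taken to $1/2$ to reach $X^{-(1-\sigma)/2}$. The paper shifts all the way to $w=3/2$, which forces one to account for the non-trivial zeros of $L_E(\sym^2, 1-w-2i\pi\tau/L)$ crossed at $w=1/2$; under GRH these lie on $\Re(s)=1/2$, and the paper bounds their total contribution by $X^{-1/2}\cdot X^{\sigma/2}=X^{-(1-\sigma)/2}$ using the decay estimate in Lemma \ref{lem:usefulfacts}. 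You make no mention of these zeros, and the ``zero-free region'' bounds you invoke cannot help when the argument of $L_E(\sym^2,\cdot)$ reaches the critical line---GRH (or a subconvexity-type input) is the operative hypothesis, and it is precisely here that the final exponent $(1-\sigma)/2$ is produced. These two missing ingredients are the heart of the proof.
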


Before proving Lemma \ref{lem:expansionAEterm} we first derive a useful expansion. We consider the following term from (\ref{AEtermtoanalyse}):
\begin{equation}
T(\tau) \ :=\ \frac{\zeta(1 + \frac{2 i \pi \tau}{L})L_E(\sym^2, 1 -
\frac{2 i \pi \tau}{L})}{L_E(\sym^2,1)} \times A_E\Big(-\frac{i \pi
\tau}{L}, \frac{i \pi \tau}{L}\Big).
\end{equation}
Our goal is to replace this with a uniformly convergent Euler product times $\zeta\left(1 + 2{i\pi\tau}/{L}\right)$, with the residue at $\tau=0$ readily computable. We let $s > 1$ be a free parameter. From the expansion of $A_E(\alpha, \gamma)$ in (\ref{definitionofAE}) we have
\begin{equation} \label{Tterm}
T(\tau) \ =\ \left((\zeta(s) \times V_{\nmid}\Big(-\frac{i \pi
\tau}{L}s, \frac{i \pi \tau}{L}s\Big) \times V_{|}\Big(-\frac{i \pi
\tau}{L}s, \frac{i \pi \tau}{L}s\Big)\right)\Bigg|_{s=1}
\end{equation}
where (see \cite{HKS}, equations (2.17) and (2.18)) we introduced the following to improve convergence:
\begin{eqnarray} \label{Euler}
V_\nmid(\alpha, \gamma) &= &\prod_{p\nmid M} \left(1 + \frac{p}{p
+ 1}\left(\sum_{m = 1}^\infty \frac{\lambda(p^{2m})}{p^{m(1 +
2\alpha)}} - \frac{\lambda(p)}{p^{1 + \alpha + \gamma}}\sum_{m =
0}^\infty \frac{\lambda(p^{2m+1})}{p^{m(1 +
2\alpha)}}+ \frac{1}{p^{1 + 2\gamma}}\sum_{m
= 0}^\infty \frac{\lambda(p^{2m})}{p^{m(1 + 2\alpha)}} \right)
\right)\nonumber\\
V_{|}(\alpha,\gamma)&=&\prod_{p| M}\Bigg(\sum_{m= 0}^{\infty}\bigg(
\frac{\lambda(p^m)\omega_E^{m}} {p^{m(1/2 + \alpha)
}}-\frac{\lambda(p)\lambda(p^m)\omega_E^{m+1}}{p^{m(1/2+\alpha)+1/2+\gamma}}\bigg)\Bigg).
\end{eqnarray}
From \cite{HKS}, equation (2.31) we have
\begin{eqnarray}
V_{\nmid}(\alpha, \gamma)
&=& \label{orderforeulerproduct} \prod_{p\nmid M} \left(1 +
\frac{\lambda(p^2)}{p^{1 + 2\alpha}} - \frac{\lambda(p^2) + 1
}{p^{1 + \alpha + \gamma}} + \frac{1}{p^{1 + 2 \gamma}} +
\cdots\right),
\end{eqnarray}
where the $\cdots$ indicate terms that converge like $1/p^2$ when
$\alpha$ and $\gamma$ are small.

In (\ref{Tterm}) the contribution from the lone bad prime $M$ is readily managed, and does not affect the convergence or divergence of the product. We are left with
\begin{eqnarray}
\tilde{T}(\tau) & := & \left(\zeta(s) \times V_{\nmid}\Big(-\frac{i \pi \tau}{L}s, \frac{i \pi \tau}{L}s\Big)\right)\Bigg|_{s=1}\nonumber\\
& = & \left(\prod_p \left(1 +
\frac{\lambda(p^2)}{p^{1 - 2\frac{i \pi \tau}{L}s}} - \frac{\lambda(p^2) + 1
}{p} + \frac{1}{p^{1 + 2 \frac{i \pi \tau}{L}s}} +
\cdots\right) \left(1 + \frac{1}{p^s}+\cdots\right)\right)\Bigg|_{s=1} \nonumber\\
& =& \left(\prod_p \left(1 +
\frac{\lambda(p^2)}{p^{1 - 2\frac{i \pi \tau}{L}s}} - \frac{\lambda(p^2) + 1
}{p} + \frac{1}{p^{1 + 2 \frac{i \pi \tau}{L}s}} +
\frac{1}{p^s}\right.\right.\nonumber\\ & & \ \ \ \ \ +\ \left.\left.\frac{\lambda(p^2)}{p^{1+s - 2\frac{i \pi \tau}{L}s}}
- \frac{\lambda(p^2) + 1
}{p^{1+s}} + \frac{1}{p^{1+s + 2 \frac{i \pi \tau}{L}s}} +
\cdots\right)\right)\Bigg|_{s=1}\nonumber\\
& = & \left( \prod_p \left(1 +
\frac{\lambda(p^2)}{p^{1 - 2\frac{i \pi \tau}{L}s}} - \frac{\lambda(p^2)
}{p} + \frac{1}{p^{1 + 2 \frac{i \pi \tau}{L}s}} -\frac{1}{p}\left(1-\frac{1}{p^{s-1}}\right)+
\cdots\right)\right)\Bigg|_{s=1}.\nonumber\\
\end{eqnarray}

Note that the $(1/p)\left(1-{1}/{p^{s-1}}\right)$ term goes to $0$ as $s\rightarrow 1$.  Also note that (cf. \cite{HKS}, (2.32) and (2.33))
\begin{equation}
L_E(\sym^2, 1 - 2i \pi \tau/L) = \prod_p \left(1 + \frac{\lambda(p^2)}{p^{1 - 2\frac{i \pi \tau}{L}}} + \cdots\right),
\end{equation}
and
\begin{equation}
\frac{1}{L_E(\sym^2, 1)}\ =\ \prod_p \left(1 - \frac{\lambda(p^2)}{p} + \cdots \right), \ \ \
\zeta\left(1 + 2 \frac{i \pi \tau}{L}\right)\ =\ \prod_p\left(1 + \frac{1}{p^{1 + 2 \frac{i \pi \tau}{L}}} +
\cdots\right).
\end{equation}

Thus
\begin{equation}
T(\tau)\ = \ K(\tau) \times \frac{L_E(\sym^2, 1 - 2i \pi \tau/L)}{L_E(\sym^2, 1)} \times \zeta\left(1 + 2 \frac{i \pi \tau}{L}\right)
\end{equation}
where $K(\tau)$ is a convergent Euler product that converges uniformly in the region of interest and equals 1 when $\tau = 0$ (the last claim follows from analyzing our above expansion at $\tau=0$ and comparing with the expressions in \S\ref{sec:analysisAE1}). In particular, we know that $K(\tau)=\prod_p\left(1+O(1/p^2)\right)$; if there were any higher order terms, we would have a term of higher order that $1/p^2$ in the expansion of $\tilde{T}(\tau)$ besides those already accounted for, which does not occur.

\begin{proof}[Proof of Lemma \ref{lem:expansionAEterm}]
Instead of analyzing (\ref{AEtermtoanalyse}), it suffices to show
\begin{align} \nonumber
R(g, X) = &-\frac{1}{LX^*} \int_{-\infty}^\infty g(\tau)
\sum_{d \in \mathcal{F}(X)}
\Bigg[
\bigg(\frac{\sqrt{M}|d|}{2\pi}\bigg)^{-2i \pi \tau / L}
\frac{\Gamma(1 - \frac{i \pi \tau}{L})}{\Gamma(1 + \frac{i \pi
\tau}{L})} \\ \nonumber
& \ \ \ \ \times\
 \frac{L_E(\sym^2, 1 - 2i \pi \tau/L)}{L_E(\sym^2, 1)}
\times K(\tau) \times \zeta\left(1 + 2 \frac{i \pi \tau}{L}\right)\Bigg]
d\tau
\end{align}
is ${g(0)}/{2}+O(X^{-\frac{1-\sigma}{2}})$.
Recall from \eqref{eq:scalingdefinition}
that \begin{equation} L\ =\  \log\left(\frac{\sqrt{M}X}{2\pi} \right).
\end{equation}
By Lemma \ref{lem:partialsumdexpdpiX}
 \be \sum_{d\in\mathcal{F}(X)} \left(\frac{\sqrt{M}d}{2\pi}\right)^{-\frac{2\pi i \tau}{L}} \ = \
X^\ast e^{-2\pi i \tau} \left(1 - \frac{2\pi i \tau}{L}\right)^{-1} + O(X^{1/2}\log X). \ee The $O(X^{1/2})$ term yields a contribution of size $O(X^{-1/2})$, which is negligible. Thus it suffices to study the main term, which we denote $R_1(g,X)$.

We replace $\tau$ with $\tau - iw\frac{L}{2\pi}$ with $w=0$ (we
will shift the contour in a moment).
Thus \bea R_1(g;X) & \ = \ &
-\frac{X^\ast}{LX^*} \intii g\left(\tau -
iw\frac{L}{2\pi}\right)
 e^{-2\pi i \left(\tau - iw\frac{L}{2\pi}\right) } \frac{\Gamma(1 - \frac{w}{2} - \frac{i \pi \tau}{L})}{\Gamma(1 + \frac{w}{2} + \frac{i \pi
\tau}{L})}
\nonumber\\ & & \ \ \ \ \ \ \cdot \  \frac{L_E(\sym^2, 1 - w - 2i \pi \tau/L)}{L_E(\sym^2, 1)} \cdot K(\tau)
\cdot \zeta\left(1 + w + 2 \frac{i \pi \tau}{L}\right)\Bigg]
d\tau.\eea We now shift the contour to $w = 3/2$. Remembering we are
assuming the GRH for $\zeta(s)$ and $L_E(\sym^2,\rho)$ (so that if $\zeta(\rho) = 0$ or $L_E(\sym^2,s) = 0$ then
either $\rho = \foh + i\gamma$ for some $\gamma \in \R$ or $\rho$ is
a negative even integer),  there are two
different residue contributions as we shift, arising from

\bi \item the pole of $\zeta\left(1 + w + \frac{2\pi i \tau}{L}\right)$ at $w=\tau=0$; \item the zeros of $L_E\left(\sym^2, 1 - w - 2i \pi \tau/L\right)$ when $w=1/2$ and $\tau = \gamma
\frac{L}{2\pi}$. \ei\

We claim the contribution from the pole of $\zeta\left(\sym^2,1 + w +
\frac{2\pi i \tau}{L}\right)$ at $w=\tau=0$ is $g(0)/2$. As the pole of $\zeta(s)$
is $1/(s-1)$, since $s = 1 + \frac{2\pi i \tau}{L}$ the
$1/\tau$ term from the zeta function has coefficient $\frac{L}{2\pi i}$. We lose the factor of $1/2\pi i$ when we apply the
residue theorem, there is a minus sign outside the integral and
another from the direction we integrate (we replace the integral
from $-\gep$ to $\gep$ with a semi-circle oriented clockwise; this
gives us a minus sign as well as a factor of $1/2$ since we only
have half the contour), and everything else evaluated at $\tau = 0$
is $g(0)$ (remember $K(0)=1$).

%This matches with the $g(0)/2$ term in the explicit calculations (equation \eqref{formularfinal}), so it will suffice to bound the remainder of equation \eqref{AEtermtoanalyse}  \textbf{(CHECK REF)} as part of our error term.

We now analyze the contribution from the zeros of $L_E(\sym^2,s)$ as we
shift $w$ to $3/2$. The contributions from the non-trivial zeros arise when $w = 1/2$, and we sum over $\tau = \gamma
\frac{L}{2\pi}$ with $L_E(\sym^2,\foh+i\gamma) = 0$. The $\exp\left(-2\pi i (\tau - iw\frac{L}{2\pi})\right)$ term is $O(\exp(-L/2)) = O(X^{-1/2})$, and the $K$-piece is bounded as it is uniformly convergent in this region.

From (3) of Lemma \ref{lem:usefulfacts} we have \be g\left(\gamma
\frac{L}{2\pi}-i\frac12\frac{L}{2\pi}\right) \ \ll \
X^{\sigma/2} (\tau^2+1)^{-B}\ee for any $B > 0$. From (4) of Lemma
\ref{lem:usefulfacts}, we see that the ratio of the Gamma factors is
bounded by a power of $|\tau|$. Finally, the zeta function in the numerator is
$O(1)$. Thus the contribution from the critical zeros
of $L_E(\sym^2,s)$ is bounded by \be \sum_{\gamma \atop
L_E({\text{sym}}^2,\foh+i\gamma) = 0} \ X^{-1/2} X^{\sigma/2} \int \frac{d\tau}{(\tau^2+1)^B} \ \ll \ X^{-\frac{1-\sigma}2} \ee for sufficiently large $B$. Thus there is a power savings in this term so long as $\sigma < 1$; note, however, that we \textbf{\emph{do not}} obtain square-root cancellation in this error term for \textbf{\emph{any}} support. This is very different than \cite{Mil4}, and is due to the different ratio of $L$-functions arising in this case, leading to a more complicated Euler product.

The proof is completed by a standard argument showing that the integral over $w=3/2$ is
negligible. Arguing as above shows the integral is bounded by
$O(X^{-3/2+3\sigma/2})$. It suffices to obtain polynomial in $\tau$ bounds for $L_E(\sym^2, -1/2 - 2\pi i \tau/L)$; see for instance \cite{IK}. This completes the proof of Lemma \ref{lem:expansionAEterm}, which also finishes the proof of Theorem \ref{thm:ntequalsratios}.
\end{proof}

\begin{rek} We sketch an alternate start of the proof of the above lemma. One difficulty is that $R_1(g;X)$ is defined as an
integral and there is a pole on the line of integration. We may
write \be \zeta(s) \ = \ (s-1)^{-1} \ +\ \left(\zeta(s) -
(s-1)^{-1}\right). \ee For us $s = 1 + \frac{2\pi i \tau}{L}$,
so the first factor is just $\frac{L}{2\pi i \tau}$. As
$g(\tau)$ is an even function, the main term of the integral of this
piece is \bea \intii g(\tau) \frac{e^{-2\pi i \tau}}{2\pi i \tau} \
d\tau & \ = \ & \intii g(\tau) \left(\frac{e^{-2\pi i \tau}}{4\pi i
\tau} - \frac{e^{2\pi i \tau}}{4\pi i \tau}\right) d\tau \nonumber\\
& = & -\intii g(\tau) \frac{\sin(2\pi \tau)}{2\pi \tau}\ d\tau \ = \
-\frac{g(0)}2, \eea where the last equality is a consequence of
$\supp(\hg) \subset (-1,1)$. The other terms from the $(s-1)^{-1}$
factor and the terms from the $\zeta(s) - (s-1)^{-1}$ piece are
analyzed in a similar manner as the terms in the proof of Lemma
\ref{lem:expansionAEterm}.
\end{rek}

\begin{rek} The proof of Lemma \ref{lem:expansionAEterm} follows from
shifting contours and keeping track of poles of ratios of Gamma,
zeta and $L$-functions. Arguing as in Remark 2.3 of \cite{Mil3} we can prove a related result with significantly
less work, specifically, agreement up to any power of the logarithm.
\end{rek}

%%%%%%%%%%%%%%%%%%%%%%%%%%%%%%%%%%%%%%%%%%%%%%%%%%%%%%%%%%%%%%%%%%%%%%%%%%%%%%%%%%%%%%%%%%%%%
%%%%%%%%%%%%%%%%%%%%%%%%%%%%%%%%%%%%%%%%%%%%%%%%%%%%%%%%%%%%%%%%%%%%%%%%%%%%%%%%%%%%%%%%%%%%%

\section{Generalizing Jutila's bound}\label{sec:jutila}

In these notes we generalize Jutila's bound, and show how it may be applied to analyze the contribution from odd powers of primes to the 1-level density of families of quadratic twists of a fixed ${\rm GL}_n$ form. While we are most interested in the case when the fixed form is an elliptic curve of prime conductor, we prove our bound in greater generality as this may be of use to other researchers. In particular, this result was implicitly assumed by Rubinstein \cite{Rub} in his analysis of the main term in the 1-level density of quadratic twists of a fixed form.

Recall Jutila's bound (see (3.4) of \cite{Ju3}) is \be \sum_{1 < n \le N
\atop n\ {\rm non-square}} \ \left| \sum_{0 < d \le X \atop d\ {\rm
fund.\ disc.}}\ \chi_d(n)\right|^2 \ \ll \ N X \log^{10} N, \ee where
the $d$-sum is over even fundamental discriminants at most $X$. For many applications we need to modify it further. Let $M$ be a square-free integer. We often need to restrict the $d$-sum to be over $d$ relatively prime to $M$ that are congruent to a non-zero square modulo $M$. We have $\chi_d(n) = \lag{d}{n}$, where $\lag{d}{n}$ is the Kronecker symbol. We can encode the restriction on the $d$-sum by noting \be \twocase{\foh\left(\chi_d(M)^2 + \chi_d(M)\right) \ = \ }{1}{if $d$ is a non-zero square modulo $M$ and $(d,M) = 1$}{0}{otherwise;} \ee if instead we wanted to detect $d$ a non-square modulo $M$ we would use $\chi_d(M)^2 - \chi_d(M)$.

\begin{thm}[Generalization of Jutila's bound]\label{thm:generalizedJutila} Let $M$ be a square-free positive integer. Then \be \sum_{1 < n \le N, (n,M) = 1
\atop n\ {\rm non-square}}\left( \sum_{d \le X, (d,M) = 1 \atop d \equiv \Box \neq 0 \bmod M} \chi_d(n)\right)^2 \ \ll \ NM^2X\log^{10}(NM). \ee The same bound holds if instead we restrict the $d$-sum to be over non-squares modulo $M$. \end{thm}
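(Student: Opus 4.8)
The plan is to reduce the bound to Jutila's inequality (the one quoted above) by rewriting the arithmetic condition that defines the inner $d$-sum as a short linear combination of ``clean'' character sums $\sum_{d\le X}\chi_d(m)$, and then controlling each term by Cauchy--Schwarz and Jutila.

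First I would fix a detector for the condition on $d$. For squarefree $M$, the product $\prod_{p\mid M}\tfrac12\bigl(\chi_d(p)^2+\chi_d(p)\bigr)$ equals $1$ when $d$ is a nonzero square modulo $M$ (which in particular forces $(d,M)=1$) and $0$ otherwise, because the local factor at $p\mid M$ is $1$ if $d$ is a quadratic residue mod $p$, is $0$ if $d$ is a nonresidue mod $p$, and is $0$ if $p\mid d$. (The two-term expression displayed just before the theorem is the case $\omega(M)=1$; for general squarefree $M$ one needs this product so that coprimality is enforced as well.) Using complete multiplicativity of $m\mapsto\chi_d(m)$ and the identity $\chi_d(a)^2=\chi_d(a^2)$, multiplying out and collecting terms gives
\[ \prod_{p\mid M}\tfrac12\bigl(\chi_d(p)^2+\chi_d(p)\bigr)\ =\ \frac{1}{2^{\omega(M)}}\sum_{f\mid M}\chi_d\bigl(M^2/f\bigr), \]
so that (since $f\mid M$ makes $M^2/f$ an integer)
\[ \sum_{\substack{d\le X,\ (d,M)=1\\ d\equiv\Box\neq 0\bmod M}}\chi_d(n)\ =\ \frac{1}{2^{\omega(M)}}\sum_{f\mid M}\ \sum_{d\le X}\chi_d\bigl(nM^2/f\bigr), \]
where the inner $d$-sum on the right runs over all (even) fundamental discriminants $\le X$, exactly as in Jutila's bound. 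The non-square variant is obtained the same way, with $\tfrac12\bigl(\chi_d(p)^2-\chi_d(p)\bigr)$ in the local factors.

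Next I would apply Cauchy--Schwarz over the $2^{\omega(M)}$ divisors $f\mid M$: writing $B_f(n):=\sum_{d\le X}\chi_d(nM^2/f)$ one obtains $\bigl(\sum_{d}\chi_d(n)\bigr)^2\le 2^{-\omega(M)}\sum_{f\mid M}B_f(n)^2$, and I would then sum over $1<n\le N$ with $(n,M)=1$ and $n$ non-square. For each fixed $f$, the map $n\mapsto nM^2/f$ is injective and carries this range into the non-squares in $(1,NM^2]$: writing $n=n_0k^2$ with $n_0>1$ squarefree and $(n_0,M)=1$, and $M^2/f=f\,(M/f)^2$, one has $nM^2/f=n_0f\cdot(kM/f)^2$ with $n_0f>1$ squarefree, hence $nM^2/f$ is a non-square of size at most $NM^2$. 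So Jutila's bound (applied at scale $NM^2$) gives $\sum_n B_f(n)^2\ll NM^2X\log^{10}(NM^2)\ll NM^2X\log^{10}(NM)$ for each of the $2^{\omega(M)}$ values of $f$, and the prefactor $2^{-\omega(M)}$ exactly absorbs the sum over $f$, yielding $\ll NM^2X\log^{10}(NM)$.

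The step I expect to require the most care is the verification that every argument $nM^2/f$ stays a non-square of size $O(NM^2)$: this is precisely the hypothesis under which Jutila's bound is valid, and it fails disastrously for a square argument, where $\sum_{d\le X}\chi_d(m)$ shows no cancellation and its square is of order $X^2$. This is also the reason the coprimality conditions $(n,M)=1$ and $(d,M)=1$ appear in the hypotheses, and the reason the bound carries $M^2$ rather than $M$. The remaining points — bounding the number of divisors of $M$ by $d(M)=2^{\omega(M)}$ and absorbing $\log(NM^2)$ into $\log(NM)$ up to a constant — are routine.
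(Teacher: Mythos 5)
Your argument is correct and, for $M$ prime, coincides with the paper's: detect the condition on $d$ by a short linear combination of $\chi_d(M^2/f)$'s, expand the square, and bound each resulting sum by Jutila at scale $NM^2$ after checking that $nM^2/f$ stays a non-square in $(1,NM^2]$ (which, as you observe, uses $(n,M)=1$ and $M$ squarefree). But you have gone further than the paper. The displayed two-term detector $\tfrac12\bigl(\chi_d(M)^2+\chi_d(M)\bigr)$ in fact fails once $M$ has more than one prime factor: whenever an even number of the symbols $\chi_d(p)$, $p\mid M$, equal $-1$ we get $\chi_d(M)=1$, so the detector reports a false positive, and the paper's own treatment of $S_2$ even invokes the primality of $M$ mid-proof. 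Your product detector $\prod_{p\mid M}\tfrac12\bigl(\chi_d(p)^2+\chi_d(p)\bigr)=2^{-\omega(M)}\sum_{f\mid M}\chi_d(M^2/f)$ is the right one for arbitrary squarefree $M$, your injectivity/non-squareness check for each $nM^2/f=n_0f\cdot(kM/f)^2$ is exactly right, and the Cauchy--Schwarz over the $2^{\omega(M)}$ divisors followed by Jutila gives the claimed bound with the powers of $2^{\omega(M)}$ cancelling cleanly. (A small note on your parenthetical: the problem with the two-term detector is not coprimality, which $\chi_d(M)^2$ does enforce correctly, but that $\prod_p\chi_d(p)=1$ does not imply each $\chi_d(p)=1$.) The one point that still needs adjusting is the non-square variant: for $\omega(M)>1$ the detector $\prod_{p\mid M}\tfrac12\bigl(\chi_d(p)^2-\chi_d(p)\bigr)$ picks out $d$ that are a nonresidue modulo \emph{every} $p\mid M$, a proper subset of the $d$ that are merely not a square mod $M$. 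The correct detector for the latter is $\chi_d(M^2)-\prod_{p\mid M}\tfrac12\bigl(\chi_d(p)^2+\chi_d(p)\bigr)$, which is again a linear combination of the $\chi_d(M^2/f)$ with bounded coefficients, so the same Cauchy--Schwarz/Jutila argument closes the proof. For the prime $M$ actually used elsewhere in the paper the two notions of non-square agree and there is nothing to fix.
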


\begin{proof}
In all sums below, $d$ and $d'$ denote an even fundamental discriminant. Letting $S(N,M,X)$ denote our sum of interest, we find \bea S(N,M,X) & \ = \ & \sum_{1 < n \le N, (n,M) = 1
\atop n\ {\rm non-square}}\left( \sum_{d \le X, (d,M) = 1 \atop d \equiv \Box \neq 0 \bmod M} \chi_d(n)\right)^2 \nonumber\\ &=& \fof \sum_{1 < n \le N, (n,M) = 1 \atop n\ {\rm non-square}} \left( \sum_{d \le X} \chi_d(n) \chi_d(M)^2 + \sum_{d \le X} \chi_d(n)\chi_d(M)\right)^2 \nonumber\\ &=& S_1(N,M,X) + S_2(N,M,X) \eea (using the estimate $(a+b)^2 \le 4a^2 + 4b^2$), where \bea S_1(N,M,X) & \ = \ & \sum_{1 < n \le N, (n,M) = 1 \atop n\ {\rm non-square}} \left( \sum_{d \le X} \chi_d(n) \chi_d(M)^2\right)^2 \nonumber\\ S_2(N,M,X) & \ = \ & \sum_{1 < n \le N, (n,M) = 1 \atop n\ {\rm non-square}} \left( \sum_{d \le X} \chi_d(n) \chi_d(M)\right)^2. \eea

The first sum, $S_1(N,M,x)$, is easily estimated using Jutila's bound. Note that $\chi_d(n)\chi_d(M^2)$ $=$ $\chi_d(nM^2)$, and if $n$ is not a square at most $N$ then $nM^2$ is not a square at most $NM^2$. Thus \be S_1(N,M,X) \ \ll \ NM^2 X \log^{10} (NM^2)  \ \ll \ NM^2 X \log^{10} (NM) \ee (while Jutila's bound is over all square-free $n$, as it is a sum of squares we can restrict the sum over $n$). The second sum is handled similarly, using $\chi_d(n)\chi_d(M) = \chi_d(nM)$. As $M$ is prime and $(n,M) = 1$, $nM$ is not a square at most $NM$. Thus \be  S_2(N,M,X) \ \ll \ NM X \log^{10} (NM). \ee We therefore find \be S(N,M,X) \ \ll \ NM^2X\log^{10} (NM). \ee
\end{proof}

\begin{rek} Not surprisingly, we restrict to $n$ relatively prime to $M$ in Theorem \ref{thm:generalizedJutila}; if $n=M$ then since $d \equiv \Box \neq 0 \bmod d$, $\chi_d(n)$ would equal 1 and these terms would contribute on the order of $X^2$ to the sum. \end{rek}

\begin{rek}
Rubinstein \cite{Rub} calculated the main term in the 1-level density for the family of quadratic twists of a fixed form on ${\rm GL}_n$, where the fundamental discriminants used in twisting were additionally restricted so that the family had constant sign. In his work he implicitly assumed that Jutila's bound (which was the key arithmetic ingredient in the number theory calculations of the 1-level density for the family of quadratic characters) still held when the fundamental discriminants were further restricted as above; Theorem \ref{thm:generalizedJutila} justifies this assumption, and almost suffices to complete the analysis. Unlike our present work, where we are attempting to determine all lower order terms up to square-root cancelation, in \cite{Rub} the goal is just to show agreement between the main term and the predictions from random matrix theory. Thus we do not need to identify the term corresponding to the $1/L$ term from \eqref{eq:Soddwithlowerorderterm}. We thus simply follow the argument in \cite{Rub} and trivially bound the contribution from primes dividing $M$ (which we now assume is just square-free and not necessarily prime).
\end{rek}

%%%%%%%%%%%%%%%%%%%%%%%%%%%%%%%%%%%%%%%%%%%%%%%%%%%%%%%%%%%%%%%%%%%%%%%%%%%%%%%%%%%%%%%%%%%%%
%%%%%%%%%%%%%%%%%%%%%%%%%%%%%%%%%%%%%%%%%%%%%%%%%%%%%%%%%%%%%%%%%%%%%%%%%%%%%%%%%%%%%%%%%%%%%
%\setcounter{equation}{0}

%\section{Conclusions }

%We see strong agreement between number theory and the $L$-functions
%Ratios Conjecture. Many terms match perfectly; the difficulty is in
%the $k$ odd terms on the number theory side, and the product of the
%Gamma factors, the $\zeta$-function and $A_D$ on the Ratios side.

\appendix

%%%%%%%%%%%%%%%%%%%%%%%%%%%%%%%%%%%%%%%%%%%%%%%%%%%%%%%%%%%%%%%%%%%%%%%%%%%%%%%%%%%%%%%%%%%%%
%%%%%%%%%%%%%%%%%%%%%%%%%%%%%%%%%%%%%%%%%%%%%%%%%%%%%%%%%%%%%%%%%%%%%%%%%%%%%%%%%%%%%%%%%%%%%
\setcounter{equation}{0}
\section{Explicit Formula}\label{sec:expformula}

We fix an elliptic curve $E$ with prime conductor $M$ and let $L_E(s)$ be the $L$-function attached to $E$.
We denote the quadratic twists of $L_E(s)$ by $L_E(s,\chi_d)$. For $(d, M ) = 1$ the completed $L$-function of $L_E(s,\chi_d)$ is
\begin{equation} \label{CompletedLfunction}
\Lambdachid = \left(\frac{2\pi}{\sqrt{M}|d|}\right)^{-s - 1/2} \Gamma(s + 1/2) L_E(s,\chi_d)
\end{equation}
which relates $s$ to $1-s$, i.e.,
\begin{equation} \label{srelatingoneminuss}
\Lambdachid = \chi_d(-M)\omega_E \Lambda(1-s, \chi_d).
\end{equation} As we are only interested in the quadratic twists with even functional equation
we have
\begin{equation} \label{signoffunctionalequation}
\chi_d(-M)\omega_E = +1.
\end{equation} Taking the logarithmic derivative of \eqref{CompletedLfunction} gives
\begin{equation} \label{logarithmicderivativeofLambda}
\Lambdaderivative = \log\left( \frac{\sqrt{M}|d|}{2\pi} \right)
+ \frac{\Gamma'(s+ 1/2)}{\Gamma(s + 1/2)} + \frac{L_E'(s,\chi_d)}{L_E(s,\chi_d)}.
\end{equation}

For later use we express the logarithmic derivative of $L_E(s,\chi_d)$ as a sum over primes.
We note that
\begin{align} \nonumber
\Lchid & = \prod_{p | M}\left(1 - \frac{\lambda(p)\chi_d(p)}{p^s}\right)^{-1}\prod_{p\nmid M}\left(1 - \frac{\lambda(p)\chi_d(p)}{p^s} + \frac{\chi_d^2(p)}{p^{2s}}\right)^{-1}\\ \label{LEulerProduct}
& = \prod_p \left(1 - \frac{\alpha_p \chi_d(p)}{p^s}\right)^{-1} \left(1 - \frac{\beta_p \chi_d(p)}{p^s}\right)^{-1}
\end{align}
where the above product is over all primes,
\begin{equation}
\alpha_p + \beta_p = \lambda(p)
\end{equation}
and
\begin{equation}
\alpha_p \beta_p =
\begin{cases}
0 \mbox{~if~} p|M\\
1 \mbox{~if~} p\nmid M.
\end{cases}
\end{equation}
The logarithmic derivative of \eqref{LEulerProduct} is
\begin{equation} \label{logarithmeticderivativeofL}
\frac{L_E'(s,\chi_d)}{L_E(s,\chi_d)}
= -\sum_p \log p \sum_{k =1}^\infty \frac{(\alpha_p^k + \beta_p^k)\chi_d^k(p)}{p^{sk}}.
\end{equation}
We assume GRH, so if $\tfrac{1}{2} + i \gamma$ denotes a zero of $\Lambda(s, \chi_d)$ we have
$\gamma \in \RR$. Let $\phi$ denote an even Schwartz function where its Fourier transform
\begin{equation}
\widehat{\phi}(\xi) = \int_{-\infty}^\infty \phi(x) e^{-2\pi i x \xi} d x
\end{equation}
has finite support, i.e., $\supp(\widehat{\phi}) \subset (-\sigma, \sigma)$ for some finite $\sigma$.
We extend $\phi(x)$ to the whole complex plane via
\begin{equation}
H(s) = \phi\left(\frac{s - \tfrac{1}{2}}{i}\right).
\end{equation}

The starting point of all one-level density investigations is the explicit formula; the derivation below is modified from \cite{Mes,RS}.

\begin{lem} The one-level density for the family of quadratic twists by even fundamental discriminants of a fixed elliptic curve $E$ with even functional equation and prime conductor $M$ is
\begin{align} \nonumber
& \frac{1}{X^*}\sum_{d \in \mathcal{F}(X)}\sum_{\gamma_d} g\left(\gamma_d \frac{L}{\pi}\right) \\ \nonumber
& = \frac{1}{2 LX^*} \int_{-\infty}^\infty g(\tau) \sum_{d \in \mathcal{F}(X)}
\left[2\log\left(\frac{\sqrt{M}|d|}{2 \pi} \right)
+ \frac{\Gamma'}{\Gamma}\left(1 + i\frac{\pi \tau}{L}\right) +\frac{\Gamma'}{\Gamma}\left(1 - i\frac{\pi \tau}{L}\right) \right] d \tau\\
& \ \ \ \ - \ \frac{2}{2 L} \sum_{d \in \mathcal{F}(X)} \sum_{k = 1}^\infty \sum_p \frac{(\alpha_p^k + \beta_p^k)\chi_d^k(p)\log p}{p^{k/2}}
\widehat{g}\left(\frac{\log p^k}{2 L} \right),
\end{align}
where ${\mathcal{F}(X)}$ denotes the family of interest,
$$
{\mathcal{F}(X)} = \left\{0 < d \leq X : d {\rm\ an\ even\ fundamental\ discriminant\ and\ } \chi_d(-M)\omega_E = 1 \right\},
$$ and \be X^\ast\ =\ |\mathcal{F}(X)|, \ \ \ \ \ L\ =\ \log\left(\frac{\sqrt{M}X}{2\pi} \right). \ee
\end{lem}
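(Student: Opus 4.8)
The plan is to derive the identity by the classical contour-integration argument for explicit formulas, applied one twist at a time and then averaged. Fix $d\in\mathcal F(X)$; note that the defining condition $\chi_d(-M)\omega_E=1$ forces $\chi_d(M)\neq0$, hence $(d,M)=1$, so \eqref{CompletedLfunction} applies and (by modularity of $E$, since twisting a newform by a primitive character preserves entirety) $\Lambda(s,\chi_d)$ is entire of order one whose only zeros are the nontrivial zeros $\tfrac12+i\gamma_d$ of $L_E(s,\chi_d)$ — all on $\RRe(s)=\tfrac12$ under GRH. Run the construction above with the test function $t\mapsto g(tL/\pi)$ in place of $\phi$, so that $H(s)=g\big((s-\tfrac12)L/(i\pi)\big)$ and $H(\tfrac12+i\gamma_d)=g(\gamma_d L/\pi)$. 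First I would integrate $\tfrac1{2\pi i}H(s)\,\Lambda'(s,\chi_d)/\Lambda(s,\chi_d)$ around the rectangle with vertical sides $\RRe(s)=1+\varepsilon$ and $\RRe(s)=-\varepsilon$ and horizontal sides at heights $\pm iT$; letting $T\to\infty$ through ordinates that avoid zeros (using the standard unit-window zero-count $O(\log(M|d|(|T|+2)))$) and exploiting the rapid decay of $H$ on vertical lines against the polynomial growth of $\Lambda'/\Lambda$ on the horizontal segments, the horizontal pieces drop out and Cauchy's theorem gives $\sum_{\gamma_d}g(\gamma_d L/\pi)=\tfrac1{2\pi i}\big(\int_{(1+\varepsilon)}-\int_{(-\varepsilon)}\big)H(s)\,\Lambda'(s,\chi_d)/\Lambda(s,\chi_d)\,ds$.

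Next I would fold the left edge onto the right. Substituting $s\mapsto1-s$ on $\RRe(s)=-\varepsilon$ and using the functional equation \eqref{srelatingoneminuss} with the even sign \eqref{signoffunctionalequation} — which gives $\Lambda'/\Lambda(1-s,\chi_d)=-\Lambda'/\Lambda(s,\chi_d)$ and, since $g$ is even, $H(1-s)=H(s)$ — the two line integrals combine into $\tfrac1{\pi i}\int_{(1+\varepsilon)}H(s)\,\Lambda'(s,\chi_d)/\Lambda(s,\chi_d)\,ds$, with no leftover boundary term precisely because the sign is $+1$ (so there is no forced central zero to account for). Now insert \eqref{logarithmicderivativeofLambda}, splitting $\Lambda'/\Lambda$ into the constant $\log(\sqrt M|d|/2\pi)$, the archimedean term $\Gamma'/\Gamma(s+\tfrac12)$, and the finite part $L_E'/L_E(s,\chi_d)$. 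For the first two I would shift the contour to $\RRe(s)=\tfrac12$ — no poles are crossed, since $\Gamma(s+\tfrac12)$ is analytic and non-vanishing for $\RRe(s)>-\tfrac12$ — parametrize $s=\tfrac12+it$, use the evenness of $g$ to symmetrize $\Gamma'/\Gamma(1+it)$ into $\tfrac12\big(\Gamma'/\Gamma(1+it)+\Gamma'/\Gamma(1-it)\big)$, and change variables $t=\pi\tau/L$ (so $dt=\tfrac{\pi}{L}\,d\tau$ and $g(tL/\pi)=g(\tau)$). This produces $\tfrac1{2L}\int_{-\infty}^\infty g(\tau)\big[2\log(\sqrt M|d|/2\pi)+\Gamma'/\Gamma(1+i\pi\tau/L)+\Gamma'/\Gamma(1-i\pi\tau/L)\big]\,d\tau$, which after summing over $d$ and dividing by $X^\ast$ is exactly the first bracketed integral of the Lemma.

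For the arithmetic piece I would first expand $L_E'/L_E(s,\chi_d)$ on $\RRe(s)=1+\varepsilon$ as the absolutely convergent Dirichlet series \eqref{logarithmeticderivativeofL} (convergent there by Deligne's bound $|\alpha_p|,|\beta_p|\le1$, with $\beta_p=0$ for $p\mid M$), and then interchange with the integral — legitimate because the compact support of $\widehat g$ leaves only finitely many prime powers $p^k$. For each $p^k$ the integrand $H(s)p^{-sk}$ is entire, so shifting to $\RRe(s)=\tfrac12$ and applying Fourier inversion (with the substitution $u=tL/\pi$) gives $\tfrac1{\pi i}\int_{(1+\varepsilon)}H(s)p^{-sk}\,ds=\tfrac1{L\,p^{k/2}}\,\widehat g\!\big(\tfrac{\log p^k}{2L}\big)$, so the arithmetic piece equals $-\tfrac1L\sum_p\sum_{k\ge1}\tfrac{(\alpha_p^k+\beta_p^k)\chi_d^k(p)\log p}{p^{k/2}}\,\widehat g\!\big(\tfrac{\log p^k}{2L}\big)$; summing over $d$ and dividing by $X^\ast$ gives the prime sum in the statement, completing the proof. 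The only genuinely analytic points are the vanishing of the horizontal contributions and the legitimacy of the two contour shifts, and these — handled by the standard growth bounds for $\Lambda'/\Lambda(\cdot,\chi_d)$ on vertical lines together with the zero-counting estimate quoted above — are the main (and only) obstacle; everything else is bookkeeping of constants.
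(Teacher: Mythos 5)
Your argument is correct and follows the same contour-integration route as the paper's Appendix~\ref{sec:expformula}: integrate $H(s)\,\Lambda'/\Lambda(s,\chi_d)$ along a vertical line in the region of absolute convergence, fold the reflected line back using the functional equation and evenness of $g$, shift the archimedean and arithmetic pieces to $\Re(s)=\tfrac12$, expand the Dirichlet series, and rescale via $\tau = tL/\pi$; your choice of the line $\Re(s)=1+\varepsilon$ (with explicit horizontal truncation) versus the paper's $\Re(s)=3/2$ is immaterial, and your normalization checks give exactly the factor $\tfrac{2}{2L}$ and the arguments $\tfrac{\log p^k}{2L}$ appearing in the statement.
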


\begin{proof}
We set
\begin{equation} \label{definitionofI}
I = \frac{1}{2\pi i } \int_{\RRe(s) = 3/2} \Lambdaderivative H(s) ds.
\end{equation}

We shift the contour to $\RRe(s) = -1/2$. The only contribution is from the zeros of
$\Lambdachid$. Hence we obtain
\begin{equation} \label{Ione}
I = \sum_\gamma \phi(\gamma) + \frac{1}{2 \pi i} \int_{\RRe(s) = -1/2} \Lambdaderivative H(s) ds.
\end{equation}

By \eqref{srelatingoneminuss} and \eqref{signoffunctionalequation} we have
\begin{equation} \label{Lambdaone}
\Lambdachid = \Lambdachidone
\end{equation}
and therefore also
\begin{equation} \label{Lambdatwo}
\Lambda'(s,\chi_d) = - \Lambda'(1-s,\chi_d).
\end{equation}

With \eqref{Lambdaone} and \eqref{Lambdatwo} in \eqref{Ione} we obtain
\begin{equation}
I = \sum_\gamma \phi(\gamma) - \frac{1}{2 \pi i} \int_{\RRe(s) = -1/2} \frac{\Lambda'(1-s,\chi_d)}{\Lambda(1-s,\chi_d)}  H(s) ds.
\end{equation}
A change of variable $s \rightarrow 1 - s$ yields
\begin{equation} \label{Lambdathree}
I = \sum_\gamma \phi(\gamma) - \frac{1}{2 \pi i} \int_{\RRe(s) = 3/2} \Lambdaderivative  H(1-s) ds.
\end{equation}

Combining \eqref{definitionofI} and \eqref{Lambdathree} gives
\begin{equation}
\sum_\gamma \phi(\gamma) = \frac{1}{2 \pi i} \int_{\RRe(s) = 3/2} \Lambdaderivative [H(s) + H(1-s)]ds.
\end{equation}
Using \eqref{logarithmicderivativeofLambda} we expand the logarithmic derivative of $\Lambdachid$ and shift
the contours of all terms except the $L'(s,\chi_d)/L(s,\chi_d)$ term to $\RRe(s) = 1/2$. (Recall that $H(s)$
is even and symmetric about $s = \tfrac{1}{2}$.)
The result is
\begin{equation} \label{Lambdafour}
\sum_\gamma \phi(\gamma) = I_1 + I_2
\end{equation}
where
\begin{equation}
I_1 = \frac{1}{2 \pi i} \int_{\RRe(s) = 1/2} \left[\log\left(\frac{\sqrt{M}|d|}{2 \pi} \right) + \frac{\Gamma'}{\Gamma}(s + 1/2)\right] [H(s) + H(1-s)]ds \label{Ieins}
\end{equation}
and
\begin{equation}
I_2 = \frac{1}{2 \pi i} \int_{\RRe(s) = 3/2} \frac{L_E'(s, \chi_d)}{L_E(s, \chi_d)} [H(s) + H(1-s)]ds.
\end{equation}
The integral in \eqref{Ieins} with $s = \tfrac{1}{2} + iy$ is
\begin{align} \nonumber
I_1 = & \frac{1}{2 \pi i} \int_{-\infty}^\infty \left[\log\left(\frac{\sqrt{M}|d|}{2 \pi} \right)
+ \frac{\Gamma'}{\Gamma}(\tfrac{1}{2} + iy + \tfrac{1}{2})\right] 2 \phi(y) i dy\\ \nonumber
& = \frac{1}{2 \pi} \int_{-\infty}^\infty \left[2\log\left(\frac{\sqrt{M}|d|}{2 \pi} \right)
+ 2 \frac{\Gamma'}{\Gamma}(1 + iy)\right]\phi(y)dy \\ \label{endresultIone}
& = \frac{1}{2 \pi} \int_{-\infty}^\infty \left[2\log\left(\frac{\sqrt{M}|d|}{2 \pi} \right)
+ \frac{\Gamma'}{\Gamma}(1 + iy) +\frac{\Gamma'}{\Gamma}(1 - iy) \right]\phi(y)dy.
\end{align}
Now we analyze the integral $I_2$ in \eqref{Lambdafour}, which is
\begin{align}
I_2 & = \frac{1}{2 \pi i} \int_{\RRe(s) = 3/2} \frac{L_E'(s, \chi_d)}{L_E(s, \chi_d)} [H(s) + H(1-s)]ds.
\end{align}
We shift the contour to $\RRe(s) = 1/2$ and use \eqref{logarithmeticderivativeofL} to obtain
\begin{equation}
I_2 = -\frac{1}{2 \pi i} \sum_{k = 1}^\infty \sum_p \log p (\alpha_p^k + \beta_p^k) \chi_d^k(p)
\int_{\RRe(s) = 1/2} [H(s) + H(1-s)]e^{-ks\log p} ds.
\end{equation}
A change of variable $s = \tfrac{1}{2} + iy$ yields
\begin{align} \nonumber
I_2 & = -\frac{2}{2 \pi} \sum_{k = 1}^\infty \sum_p (\alpha_p^k + \beta_p^k)\chi_d^k(p) \log p
\int_{-\infty}^\infty \phi(y) e^{-k(1/2 +iy)\log p} dy\\ \nonumber
& = -\frac{2}{2 \pi} \sum_{k = 1}^\infty \sum_p \frac{(\alpha_p^k + \beta_p^k)\chi_d^k(p)\log p}{p^{k/2}}
\int_{-\infty}^\infty \phi(y) e^{-2\pi i y \frac{\log p^k}{2\pi}} d y\\ \label{endresultItwo}
& = -\frac{2}{2 \pi} \sum_{k = 1}^\infty \sum_p \frac{(\alpha_p^k + \beta_p^k)\chi_d^k(p)\log p}{p^{k/2}}
\widehat{\phi}\left(\frac{\log p^k}{2\pi} \right).
\end{align}

Thus with \eqref{endresultIone} and \eqref{endresultItwo} we obtain the following explicit formula for
the one-level density:
\begin{align} \nonumber
\sum_\gamma \phi(\gamma)
& = \frac{1}{2 \pi} \int_{-\infty}^\infty \left[2\log\left(\frac{\sqrt{M}|d|}{2 \pi} \right)
+ \frac{\Gamma'}{\Gamma}(1 + iy) +\frac{\Gamma'}{\Gamma}(1 - iy) \right]\phi(y)dy\\ \label{explicitformulaone}
& ~~~-\frac{2}{2 \pi} \sum_{k = 1}^\infty \sum_p \frac{(\alpha_p^k + \beta_p^k)\chi_d^k(p)\log p}{p^{k/2}}
\widehat{\phi}\left(\frac{\log p^k}{2\pi} \right).
\end{align}

We slightly rewrite \eqref{explicitformulaone} by summing over the twists $d$ and scale the zeros by the
mean density of zeros. First we note for $g(x) = \phi(A \cdot x), A \not=0$ the Fourier transforms are
related through
\begin{equation}
\widehat{g}(\xi) = \frac{\widehat{\phi}(\xi/A)}{A}\label{gphirelation}.
\end{equation}

We set
\begin{equation}\label{eq:defnL}
L = \log\left(\frac{\sqrt{M}X}{2\pi} \right)
\end{equation}
and replace $\phi(y)$ in \eqref{explicitformulaone} with
\begin{equation}
g(\tau) = \phi\left(y\right)
\end{equation}
where $\tau = yL/\pi$. Note that other papers often denote our $L$ by $2L$; we use this notation to match \cite{HKS}, who calculated much of the Ratios' prediction for this family. Finally summing over the quadratic twists yields the claim.
\end{proof}

%%%%%%%%%%%%%%%%%%%%%%%%%%%%%%%%%%%%%%%%%%%%%%%%%%%%%%%%%%%%%%%%%%%%%%%%%%%%%%%%%%%%%%%%%%%%%
%%%%%%%%%%%%%%%%%%%%%%%%%%%%%%%%%%%%%%%%%%%%%%%%%%%%%%%%%%%%%%%%%%%%%%%%%%%%%%%%%%%%%%%%%%%%%
\setcounter{equation}{0}

\section{Sums over fundamental discriminants}\label{sec:sumsoverfunddiscs}

We generalize the calculations in Appendix B of \cite{Mil4} to handle our family, which has the added restriction of requiring our even fundamental discriminants $d$ to be a non-zero square modulo a prime $M$. We can encode the restriction on the $d$-sum by noting \be \twocase{\foh\left(\chi_d(M)^2 + \chi_d(M)\right) \ = \ }{1}{if $d$ is a non-zero square modulo $M$ and $(d,M) = 1$}{0}{otherwise;} \ee if instead we wanted to detect $d$ a non-square modulo $M$ we would use $\chi_d(M)^2 - \chi_d(M)$.

\begin{lem}\label{lem:numdwithpdivided} Let $d$ denote an even
fundamental discriminant at most $X$, and set \be X^\ast\ =\ \sum_{d \le
X \atop d = \Box \not\equiv 0 \bmod M} 1\ee for an odd prime $M$. Then\footnote{We chose to write $X^\ast$ to facilitate comparison with the cardinality of the corresponding family from \cite{Mil4}, where we did not impose the constraint that $d$ equal a non-zero square modulo $M$.} \be X^\ast   \ = \ \frac{3}{\pi^2}X \cdot \frac{M}{2(M+1)} + O(X^{1/2}) \ee and
for $p \le X^{1/2}$ we have \be \twocase{\sum_{d \le X, p|d  \atop d = \Box \not\equiv 0 \bmod M} 1 \ = \ }{
\frac{X^\ast}{p+1} + O(X^{1/2})}{if $p\nmid M$}{0}{if $p|M$.} \ee
%Note that $X \ll X^\ast
%\ll X$. If $p=2$ then the above is true only if our family is all
%fundamental discriminants; if it is just the even fundamental
%discriminants then the answer is zero (even fundamental
%discriminants are congruent to 1 modulo 4, and thus none of them are
%divisible by 2).
\end{lem}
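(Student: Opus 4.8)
The plan is to follow Appendix B of \cite{Mil4}, which carries out this kind of count for the same family; the only genuinely new ingredient is the congruence restriction $d \equiv \Box \not\equiv 0 \bmod M$, which we strip off using the identity $\foh\!\left(\chi_d(M)^2 + \chi_d(M)\right)$, equal to $1$ when $d$ is a non-zero square modulo $M$ and to $0$ otherwise (recorded above). Since $M$ is an odd prime, $\chi_d(M) = \lag{d}{M}$ is the Legendre symbol, hence depends only on $d \bmod M$ and is a non-trivial character on $(\Z/M\Z)^\ast$. Throughout, a fundamental discriminant $d \le X$ (positive, as in $\mathcal{F}(X)$) is written in the standard way either as $d = n$ with $n$ squarefree, $n \equiv 1 \bmod 4$, or as $d = 4n$ with $n$ squarefree, $n \equiv 2,3 \bmod 4$, squarefreeness being detected by $\mu^2(n) = \sum_{a^2\mid n}\mu(a)$.

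First I would write $X^\ast = \foh(\Sigma_1 + \Sigma_2)$, the sums over fundamental discriminants $d \le X$, where $\Sigma_1 = \sum_d \chi_d(M)^2$ and $\Sigma_2 = \sum_d \chi_d(M)$. The term $\Sigma_1$ is simply the number of fundamental discriminants $d \le X$ with $(d,M)=1$: inserting the parametrization above, the $\mu$-sieve, and the congruence encoding $(d,M)=1$, then summing the resulting absolutely convergent $\mu$-twisted series over $a \le X^{1/2}$ (its tail being $\ll \sum_{a > X^{1/2}} X/a^2 \ll X^{1/2}$), one gets $\Sigma_1 = \tfrac{3}{\pi^2}\cdot\tfrac{M}{M+1}\,X + O(X^{1/2})$, with $\tfrac{M}{M+1}$ the local density at $M$ among squarefree integers. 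Halving gives the asserted main term $\tfrac{3}{\pi^2}X\cdot\tfrac{M}{2(M+1)}$.

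Next I would show $\Sigma_2 = O(X^{1/2})$. After the $\mu$-sieve, and using $\chi_{a^2 b}(M) = \chi_b(M)$ when $(a,M)=1$ (and $=0$ otherwise), $\Sigma_2$ equals $\sum_{a \le X^{1/2},\,(a,M)=1}\mu(a)$ times inner sums $\sum_b \chi_b(M)$ with $b$ ranging over an interval and a fixed residue class modulo a bounded power of $2$. Because $\chi_\bullet(M)$ is a non-trivial character modulo $M$ with $\gcd(M,2)=1$, the summand $b \mapsto \chi_b(M)\,\mathbf{1}[b\ \mathrm{admissible}\bmod 2^j]$ has period $2^j M$ and sums to $0$ over every full period, so each inner sum is $O(2^j M) = O_M(1)$ uniformly in $a$; then $\sum_{a \le X^{1/2}} |\mu(a)| \ll X^{1/2}$ finishes it. For the count with $p \mid d$ the same two-term split applies after inserting $\mathbf{1}[p\mid d]$: if $p = M$, the constraint $d \equiv \Box \not\equiv 0 \bmod M$ already forces $M \nmid d$, so the count is $0$; if $p \nmid M$, then (for odd $p$ one has $p \,\|\, d$ and writes $d = pe$, while $2 \mid d$ happens precisely on the $d = 4n$-shape) running the identical sieve with this extra divisibility multiplies the main term by the local density $\tfrac{1/p}{1+1/p} = \tfrac{1}{p+1}$ of fundamental discriminants divisible by $p$, yielding $\tfrac{X^\ast}{p+1} + O(X^{1/2})$; pulling out $p$ only shortens the inner $b$-interval, so all errors stay $O(X^{1/2})$ uniformly for $p \le X^{1/2}$.

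The main obstacle is not any single estimate — each is elementary — but the combined bookkeeping: handling the two shapes of fundamental discriminant together with their congruence conditions modulo $4$ (effectively modulo $16$ once $d = 4n$ is tracked), the sieve variable $a$, the coprimality/divisibility conditions at $M$ and at $p$, and the quadratic symbol $\chi_\bullet(M)$, all at once, while keeping the errors at the clean size $O(X^{1/2})$ and uniform in $p \le X^{1/2}$. This is precisely what Appendix B of \cite{Mil4} does; the present argument re-runs it with the character $\chi_d(M)$ present, which splits every count into the ``diagonal'' piece $\Sigma_1$ (carrying the main term and the local factor $\tfrac{M}{2(M+1)}$) and the negligible character-sum piece $\Sigma_2$.
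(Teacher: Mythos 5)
Your proposal is correct and follows essentially the same approach as the paper's own proof: you use the identity $\tfrac12(\chi_d(M)^2+\chi_d(M))$ to split the count into a ``diagonal'' piece (handled by the M\"obius sieve over squarefree $d$ to produce the local factor $\tfrac{M}{M+1}$) and a non-trivial character-sum piece bounded by $O(X^{1/2})$ via periodicity, and treat $p\nmid M$ by the local density $\tfrac{1}{p+1}$ and $p=M$ trivially. The only cosmetic difference is that you factor out the mod-$M$ detector before expanding the mod-4 conditions, whereas the paper expands both detectors at once into $S_1$ and $S_2$; the substance and the error bookkeeping are identical.
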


\begin{proof} We first prove the claim for $X^\ast$, and then
indicate how to modify the proof when $p|d$. We could show this by
recognizing certain products as ratios of zeta functions or by using
a Tauberian theorem; instead we shall give a straightforward proof
suggested to us by Tim Browning (see also \cite{OS1}).
%We must show, for each $p \le X^{1/2}$, that
%\be\label{eq:sumpdividingd} \sum_{d \le X \atop p|d}
% 1 \ = \  \frac{X^\ast}{p+1} + O(X^{1/2}). \ee
%\footnote{If $F(s) = \sum_n
%a_n n^{-s}$ is a Dirichlet series with non-negative coefficients
%convergent for $\Re(s) > 1$, analytic for $\Re(s) \ge 1$ except for
%$s=1$, and $F(s) = H(s)/ (s-1)$ for a holomorphic $H(s)$ for $\Re(s)
%\ge 1$, then $\sum_{n\le x} \sim H(1)x$.}.

We first assume that $d \equiv 1 \bmod 4$, so we are considering
even fundamental discriminants $\{d \le X: d \equiv 1 \bmod 4,
\mu(d)^2=1, d = \Box \not\equiv 0 \bmod M\}$; it is trivial to modify the arguments below for $d$
such that $d/4 \equiv 2$ or $3$ modulo $4$ and $\mu(d/4)^2=1$. Let
$\chi_4(n)$ be the non-trivial character modulo 4: $\chi_4(2m) = 0$
and \be \twocase{\chi_4(n) \ = \ }{1}{if $n\equiv 1 \bmod 4$}{0}{if
$n \equiv 3 \bmod 4$.} \ee We have \bea\label{eq:startSXanalysis}
S(X) & \ = \ & \sum_{d\le X,\ d = \Box \not\equiv 0 \bmod M \atop \mu(d)^2=1,\ d\equiv 1 \bmod 4} 1
\nonumber\\ & \ = \ & \sum_{d \le
X \atop 2 \notdiv d} \mu(d)^2 \cdot \frac{1 + \chi_4(d)}2 \frac{\chi_d(M)^2+\chi_d(M)}{2}\nonumber\\
&=& \frac14 \sum_{d\le X \atop (2M,d)=1} \mu(d)^2 + \frac14 \sum_{d\le
X} \mu(d)^2 \left[\chi_4(d) \left(\chi_d(M)^2+\chi_d(M)\right) - \chi_4(d)^2\chi_d(M)\right]\nonumber\\ & \ = \ & S_1(X) + S_2(X).\eea By M\"obius
inversion \be \twocase{\sum_{m^2|d} \mu(m) \ = \ }{1}{if $d$ is
square-free}{0}{otherwise.} \ee Thus \bea S_1(X) & \ = \ & \frac14
\sum_{d \le X \atop (2M,d)=1} \sum_{m^2|d} \mu(m) \nonumber\\ &=&
\frac14 \sum_{m \le X^{1/2}\atop (2M,m)=1} \mu(m) \cdot \sum_{d\ \le\
X/m^2 \atop (2M,d)=1} 1 \nonumber\\ &=& \frac14 \sum_{m\le X^{1/2}
\atop (2M,m)=1} \mu(m)\left(\frac{X}{m^2} \frac{\phi(2M)}{2M} + O(1)\right)
\nonumber\\ &=& \frac{X}{8}\frac{M-1}{M} \sum_{m=1 \atop (2M,m) = 1}^\infty
\frac{\mu(m)}{m^2} + O(X^{1/2}) \nonumber\\ &=& \frac18 \frac{M-1}{M}
\frac{6}{\zeta(2)} \cdot \left(1 - \frac1{2^2}\right)^{-1} \left(1 - \frac1{M^2}\right)^{-1}\cdot X +
O(X^{1/2}) \nonumber\\ &=& \frac{1}{\pi^2} \frac{M}{M+1}X + O(X^{1/2}) \eea
(because we are missing the factors corresponding to $2$ and $M$ in
$1/\zeta(2)$ above). To make this comparable to the sum from \cite{Mil4} (where we did not have the condition that $d = \Box \not\equiv 0 \bmod M$) we may rewrite the above as \be S_1(X) \ = \ \frac{2}{\pi^2} X \cdot \frac{M}{2(M+1)}. \ee
Arguing in a similar manner shows $S_2(X) =
O(X^{1/2})$; this is due to the presence of a non-principal character in each of the three sums of modulus at most $8M$ (we use quadratic reciprocity to replace $\chi_d(M)$ with a character of conductor at most $8M$). For example, let $\chi$ denote any of the three non-principal characters in the expansion of $S_2(X)$. Such a term contributes  \be
 \frac14 \sum_{m \le X^{1/2}} \chi(m^2)\mu(m) \sum_{d\le
X/m^2} \chi(d) \ \ll \ X^{1/2} \ee (because we are summing
$\chi$ at consecutive integers, and thus this sum is at most $8M$).

A similar analysis shows that the number of even fundamental
discriminants $d\le X$ with $d/4 \equiv 2$ or $3$ modulo $4$ is
$\frac{1}{\pi^2}X \cdot \frac{M}{2(M+1)} + O(X^{1/2})$. Thus \be \sum_{d\le X, d = \Box \not\equiv 0 \bmod M \atop d\ {\rm an\
even\ fund.\ disc.}}1 \ = \ X^\ast \ = \
\frac{3}{\pi^2}X \frac{M}{2(M+1)}+O(X^{1/2}).\ee

We may trivially modify the above calculations to determine the
number of even fundamental discriminants $d\le X$ with $p|d$ for a
fixed prime $p$. We first assume $p\equiv 1 \bmod 4$. In
\eqref{eq:startSXanalysis} we replace $\mu(d)^2$ with $\mu(pd)^2$,
$d \le X$ with $d \le X/p$, $(2M,d)=1$ with $(2Mp,d)=1$. As $d$ and $p$
are now relatively prime (after this change of variables), $\mu(pd) = \mu(p)\mu(d)$ and the main term
becomes \bea S_{1;p}(X) & \ = \ &
\frac14 \sum_{d \le X/p \atop (2Mp,d)=1} \sum_{m^2|d} \mu(m) \nonumber\\
&=& \frac14 \sum_{m \le (X/p)^{1/2}\atop (2Mp,m)=1} \mu(m) \cdot
\sum_{d\ \le\ (X/p)/m^2 \atop (2Mp,d)=1} 1 \nonumber\\ &=& \frac14
\sum_{m\le (X/p)^{1/2} \atop
(2Mp,m)=1} \mu(m)\left(\frac{X/p}{m^2} \cdot \frac{\phi(2Mp)}{2Mp} + O(1)\right) \nonumber\\
&=& \frac{(p-1)(M-1)X}{8Mp^2} \sum_{m=1 \atop (2Mp, m)=1}^\infty
\frac{\mu(m)}{m^2} + O(X^{1/2}) \nonumber\\ &=& \frac18
\frac{6}{\zeta(2)} \cdot \left(1 - \frac1{2^2}\right)^{-1}
\left(1-\frac1{p^2}\right)^{-1} \left(1 - \frac1{M^2}\right)^{-1} \frac{(p-1)(M-1)X}{Mp^2}\nonumber\\ & & \ \ \ \ \ +\ O(X^{1/2}) \nonumber\\
&=& \frac{2X}{(p+1)\pi^2} \frac{M}{2(M+1)} + O(X^{1/2}) \ = \ \frac{2X^\ast/3}{p+1} + O(X^{1/2}), \eea and the cardinality of
this piece is reduced by $(p+1)^{-1}$ (note above we used $\#\{n \le
Y: (2p,n)=1\}$ $=$ $\frac{p-1}{2p}Y+O(1)$). A similar analysis as before shows that $S_{2;p}(X) = O(X^{1/2})$; the case of even fundamental discriminants $d$ with $d/4 \equiv 2$ or $3$ modulo $4$ follows analogously.

We need to trivially modify the above arguments if $p\equiv 3 \bmod
4$ (if $p = M$ these arguments are not applicable, although in this case the result is clearly zero as we are only considering $d = \Box \not\equiv 0 \bmod M$, and such $d$ are never divisible by $M$). If for instance we require $d\equiv 1 \bmod 4$ then instead of
using the factor $\mu(d)^2(1 + \chi_4(d))/2$ we use $\mu(pd)^2 (1-\chi_4(d))/2$, and the rest of the proof proceeds
similarly.

It is a completely different story if $p=2$. Note if $d\equiv 1
\bmod 4$ then 2 \emph{never} divides $d$, while if $d/4 \equiv 2$ or
3 modulo 4 then 2 \emph{always} divides $d$. There are $3X/\pi^2 \cdot \frac{M}{2(M+1)} +
o(X^{1/2})$ even fundamental discriminants at most $X$, and $X/\pi^2 \frac{M}{2(M+1)}
+ O(x^{1/2})$ of these are divisible by 2. Thus, if our family is
all even fundamental discriminants, we do get the factor of
$1/(p+1)$ for $p=2$, as one-third (which is $1/(2+1)$ of the
fundamental discriminants in this family are divisible by $2$.
\end{proof}

In our analysis of the terms from the $L$-functions Ratios
Conjecture, we shall need a partial summation consequence of Lemma
\ref{lem:numdwithpdivided}.

\begin{lem}\label{lem:partialsumdexpdpiX} Let $\mathcal{F}(X)$ denote all even fundamental discriminants congruent to a non-zero square modulo $M$
that are at most $X$, and set $X^\ast = \sum_{d \in \mathcal{F}(X)} 1$. Let $z = \tau - i w \frac{L}{2\pi}$ with $w \in [0, 1/2]$ and $L = \log(\sqrt{M}X/2\pi)$.
Then \be \sum_{d\in\mathcal{F}(X)} \left(\frac{\sqrt{M}d}{2\pi}\right)^{-\frac{2\pi i z}{L}} \ = \
X^\ast e^{-2\pi i z} \left(1
- \frac{2\pi i z}{L}\right)^{-1} + O(X^{1/2-w}\log X). \ee
\end{lem}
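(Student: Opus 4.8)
The plan is to prove Lemma \ref{lem:partialsumdexpdpiX} by partial summation, using Lemma \ref{lem:numdwithpdivided} (with $p$ replaced by $1$, i.e.\ the count $X^\ast$ of the family up to a given bound) as the input for the counting function. First I would set $F(t) = \sum_{d \in \mathcal{F}(t)} 1$, so that by Lemma \ref{lem:numdwithpdivided} we have $F(t) = c\, t + O(t^{1/2})$ with $c = \frac{3}{\pi^2}\cdot\frac{M}{2(M+1)}$; in particular $X^\ast = cX + O(X^{1/2})$. Write $\beta = \frac{2\pi i z}{L}$, so that $\RRe(\beta) = \frac{2\pi}{L}\cdot w\frac{L}{2\pi} = w \in [0,1/2]$, and the quantity of interest is $\sum_{d \in \mathcal{F}(X)} \left(\frac{\sqrt M\, d}{2\pi}\right)^{-\beta} = \left(\frac{\sqrt M}{2\pi}\right)^{-\beta}\sum_{d\in\mathcal{F}(X)} d^{-\beta}$.

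Next I would apply Abel summation to $\sum_{d \le X} d^{-\beta}$ against the measure $dF$: this gives $\int_{1^-}^{X} t^{-\beta}\, dF(t) = X^{-\beta}F(X) + \beta\int_1^X F(t)\, t^{-\beta - 1}\, dt$. Substituting $F(t) = ct + O(t^{1/2})$ splits this into a main term $X^{-\beta}(cX) + \beta\int_1^X ct\cdot t^{-\beta-1}\,dt = cX^{1-\beta} + c\beta\int_1^X t^{-\beta}\,dt = cX^{1-\beta} + \frac{c\beta}{1-\beta}(X^{1-\beta} - 1) = \frac{c}{1-\beta}X^{1-\beta} + O(1)$, and an error term coming from the $O(t^{1/2})$: here $X^{-\beta}O(X^{1/2}) = O(X^{1/2-w})$ since $|X^{-\beta}| = X^{-w}$, and $\beta\int_1^X O(t^{1/2})t^{-\beta-1}\,dt \ll |\beta| \int_1^X t^{-1/2-w}\,dt \ll |\beta| X^{1/2-w}$, and $|\beta| = \frac{2\pi|z|}{L} \ll 1$ on the relevant range of $z$ (or, if one wants the stated $\log X$, one absorbs $|\beta|$ together with the fact that the $O$-term in Lemma \ref{lem:numdwithpdivided} could more carefully be taken as $O(X^{1/2})$ uniformly, yielding the clean bound $O(X^{1/2-w}\log X)$ with room to spare). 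Then I reinstate the prefactor: $\left(\frac{\sqrt M}{2\pi}\right)^{-\beta}\cdot\frac{c}{1-\beta}X^{1-\beta} = \frac{c}{1-\beta}\left(\frac{\sqrt M\, X}{2\pi}\right)^{-\beta}X = \frac{cX}{1-\beta}e^{-\beta L}$, because $\left(\frac{\sqrt M X}{2\pi}\right)^{-\beta} = e^{-\beta \log(\sqrt M X/2\pi)} = e^{-\beta L} = e^{-2\pi i z}$. Finally replace $cX$ by $X^\ast + O(X^{1/2})$ (the discrepancy contributes $O(X^{1/2}|e^{-2\pi i z}|) = O(X^{1/2-w})$, absorbed), giving $X^\ast e^{-2\pi i z}(1 - \frac{2\pi i z}{L})^{-1} + O(X^{1/2-w}\log X)$, as claimed.

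The only genuinely delicate point is controlling the factor $(1-\beta)^{-1}$ near $\beta = 1$ and checking that none of the implied constants blow up: on the stated range $w \in [0,1/2]$ we have $\RRe(1-\beta) = 1 - w \ge 1/2$, so $|1-\beta|^{-1} \le 2$ is harmless, and similarly $|e^{-2\pi i z}| = e^{-2\pi w \cdot \frac{L}{2\pi}}\cdot\ldots$ — wait, more precisely $e^{-2\pi i z} = e^{-2\pi i \tau}e^{-w L}$, so $|e^{-2\pi i z}| = e^{-wL} = (\sqrt M X/2\pi)^{-w}$, which is $\asymp X^{-w}$; this is exactly what makes the error terms scale as $X^{1/2-w}$ rather than $X^{1/2}$. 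So the main obstacle is really just bookkeeping the dependence on $z$ (equivalently on $\tau$ and $w$) uniformly, and making sure the $\log X$ in the error term is a safe over-estimate rather than something one has to fight for; this is routine and parallels Appendix B of \cite{Mil4}, the only new feature being the extra congruence constraint $d \equiv \Box \not\equiv 0 \bmod M$, which has already been absorbed into the constant $c$ and the error term of Lemma \ref{lem:numdwithpdivided}.
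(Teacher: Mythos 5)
Your proof is correct and follows the same route as the paper: partial summation (Abel summation) against the counting function $F(t) = \sum_{d\in\mathcal{F}(t)}1 = ct + O(t^{1/2})$ supplied by Lemma \ref{lem:numdwithpdivided}, followed by recombining the prefactor $(\sqrt{M}/2\pi)^{-\beta}$ with $X^{-\beta}$ to produce $e^{-\beta L} = e^{-2\pi i z}$. One small correction to your closing remarks: the $\log X$ in the error term is not a ``safe over-estimate with room to spare'' --- it is genuinely required, since $\int_1^X t^{-1/2-w}\,dt = \frac{X^{1/2-w}-1}{1/2-w}$ degenerates to $\log X$ as $w\to 1/2$ (equivalently, the implied constant in your interim bound $\ll X^{1/2-w}$ blows up like $(1/2-w)^{-1}$), so the bound that is uniform over $w\in[0,1/2]$ is $O(X^{1/2-w}\log X)$ and the logarithm cannot be discarded; the paper flags exactly this point.
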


\begin{proof} Note \bea \sum_{d\in\mathcal{F}(X)} \left(\frac{\sqrt{M}d}{2\pi}\right)^{-\frac{2\pi i z}{L}} & \ = \ & \sum_{d\in\mathcal{F}(X)} \exp\left(-2\pi i z \frac{\sqrt{M}/2\pi}{L}\right) \exp\left(-\frac{2\pi i z}{L} \log d\right) \nonumber\\ &=& \exp\left(-2\pi i z + 2\pi i z \frac{\log X}{L}\right) \sum_{d\in\mathcal{F}(X)} d^{-2\pi i z/L}. \eea We now analyze $\sum_{d \in \mathcal{F}(X)} d^{-2\pi i z/L}$. By Lemma
\ref{lem:numdwithpdivided} we have \be \sum_{d \in \mathcal{F}(u)} 1 \ = \
\frac{3u}{\pi^2} \frac{M}{2(M+1)} + O(u^{1/2}). \ee Therefore by partial summation we
have \bea \sum_{d\in \mathcal{F}(X)} d^{-2\pi i z/L} & \ = \ & \left(X^\ast + O(X^{1/2})\right) X^{-\frac{2\pi i z}{L}}\nonumber\\ & & \ \  - \  \int_1^X \left(\frac{3u}{\pi^2} \frac{M}{2(M+1)} + O(u^{1/2})\right) u^{-\frac{2\pi i z}{L}} \frac{-2\pi i z}{L} \frac{du}{u}. \nonumber\\ \eea As $w \in [0, 1/2]$, the error terms contribute at most $O(X^{1/2-w}\log X)$ (we need to add the $\log X$ as if $w = 1/2$ the integral of the error is $\log X$); further, we may absorb the lower boundary term of the integral in the $O(X^{1/2-w}\log X)$ error term, and we find \bea & &  \sum_{d\in \mathcal{F}(X)} d^{-2\pi i z/L} \nonumber\\ & &  = \  X^\ast \exp\left(-\frac{2\pi i z \log X}{L}\right) + \frac{3}{\pi^2} \frac{M}{2(M+1)} \frac{X^{1 - \frac{2\pi i z}{L}}}{1 - \frac{2\pi i z}{L}} + O(X^{1/2-w}\log X) \nonumber\\ & & = \ X^\ast \exp\left(-\frac{2\pi i z \log X}{L}\right) + X^\ast \exp\left(-\frac{2\pi i z \log X}{L}\right) + \frac{2\pi i z}{L} \sum_{\nu=0}^\infty \left(\frac{2\pi i z}{L}\right)^\nu \nonumber\\ & & \ \ \ \ \ \ +\ O(X^{1/2-w}\log X) \nonumber\\ & & = \ X^\ast \exp\left(-\frac{2\pi i z \log X}{L}\right) \left(1 - \frac{2\pi i z}{L}\right)^{-1} + O(X^{1/2-w}\log X). \eea
Substituting yields the claim.
\end{proof}

%%%%%%%%%%%%%%%%%%%%%%%%%%%%%%%%%%%%%%%%%%%%%%%%%%%%%%%%%%%%%%%%%%%%%%%%%%%%%%%%%%%%%%%%%%%
%%%%%%%%%%%%%%%%%%%%%%%%%%%%%%%%%%%%%%%%%%%%%%%%%%%%%%%%%%%%%%%%%%%%%%%%%%%%%%%%%%%%%%%%%%%

\section{Schwartz function expansions}

Let $\phi$ be an even Schwartz
function and $\hphi$ be its Fourier transform ($\hphi(\xi) = \int
\phi(x) e^{-2\pi i x \xi}dx$); we often assume ${\rm supp}(\hphi)
\subset (-\sigma, \sigma)$ for some $\sigma < \infty$. We set \be
H(s) \ = \ \phi\left(\frac{s - \foh}{i}\right). \ee While $H(s)$ is
initially define only when $\Re(s) = 1/2$, because of the compact
support of $\hphi$ we may extend it to all of $\C$:
\bea\label{eq:extendingphi}
\phi(x) &\ = \ & \intii \hphi(\xi) e^{2\pi i x\xi} d\xi \nonumber\\
\phi(x+iy) &=& \intii \hphi(\xi) e^{2\pi i(x+iy)\xi} d\xi
\nonumber\\ H(x+iy) &=& \intii \left[\hphi(\xi)e^{2\pi(x -
\foh)}\right] \cdot e^{2\pi i y \xi} d\xi. \eea Note that $H(x+iy)$
is rapidly decreasing in $y$ (for a fixed $x$ it is the Fourier
transform of a nice function, and thus the claim follows from the
Riemann-Lebesgue lemma).

The following result is useful in expanding some terms in the Ratios' prediction.

\begin{lem}\label{lem:usefulfacts} Let $\supp(\hg) \subset (-\sigma, \sigma) \subset (-1,1)$ and $L$ $=$ $\log(\sqrt{M}X/2\pi)$. \ben
\item For $w \ge 0$, $g\left(\tau-iw \frac{L}{2\pi}\right) \ll X^{\sigma w} \left(\tau^2 + (w\frac{L}{2\pi})^2\right)^{-B}$ for any $B \ge 0$. \item For $0 < a < b$ we
have $|\Gamma(a\pm i y)/\Gamma(b\pm i y)| = O_{a,b}(1)$. \een
\end{lem}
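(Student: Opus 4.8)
\emph{The plan} is to prove parts (1) and (2) independently, each by a short standard argument; no input beyond Fourier inversion and Stirling's formula is required. For part (1) I would start from the analytic continuation of $g$ recorded in \eqref{eq:extendingphi}: since $\hg$ is smooth and supported in $(-\sigma,\sigma)$, for every $z\in\C$ we have $g(z)=\int_{-\sigma}^{\sigma}\hg(\xi)e^{2\pi i z\xi}\,d\xi$. Writing $z=\tau-iw\frac{L}{2\pi}$ gives $|e^{2\pi i z\xi}|=e^{wL\xi}\le e^{wL\sigma}$ for $\xi$ in the support, and since $e^{L}=\sqrt{M}X/2\pi$ this equals $(\sqrt{M}/2\pi)^{w\sigma}X^{w\sigma}\ll_{M}X^{w\sigma}$ (the factor $(\sqrt{M}/2\pi)^{w\sigma}$ is harmless since $M$ is fixed and $w$ lies in a bounded range in every application). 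This already yields the trivial bound $|g(\tau-iw\frac{L}{2\pi})|\ll_{M}X^{w\sigma}$. To extract polynomial decay I would then integrate by parts $k=\lceil 2B\rceil$ times in $\xi$; because $\hg\in C_c^\infty(-\sigma,\sigma)$ every boundary term vanishes and each step produces a factor $(2\pi i z)^{-1}$, so $g(z)=(2\pi i z)^{-k}\int_{-\sigma}^{\sigma}\hg^{(k)}(\xi)e^{2\pi i z\xi}\,d\xi$ and hence $|g(z)|\ll_{B,M}X^{w\sigma}|z|^{-k}$. Since $|z|^{2}=\tau^{2}+(w\frac{L}{2\pi})^{2}$ and $k\ge 2B$, this gives the claimed bound when $|z|\ge 1$; when $|z|\le 1$ the claimed right-hand side is at least $X^{w\sigma}$, so the trivial bound suffices. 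Patching the two ranges proves (1).

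For part (2) I would invoke Stirling's formula in the form giving asymptotics uniform in a fixed vertical strip: for $\RRe(s)$ in a compact interval $[a,b]\subset(0,\infty)$, $|\Gamma(s)|=\sqrt{2\pi}\,|\IIm(s)|^{\RRe(s)-1/2}e^{-\pi|\IIm(s)|/2}(1+O(1/|\IIm(s)|))$ as $|\IIm(s)|\to\infty$ (see e.g.\ \cite{IK}). Taking $s=a+iy$ and $s=b+iy$, the $e^{-\pi|y|/2}$ and $\sqrt{2\pi}$ factors cancel in the quotient, giving $|\Gamma(a+iy)/\Gamma(b+iy)|\ll_{a,b}|y|^{a-b}\le 1$ for $|y|\ge 1$ because $a<b$. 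For $|y|\le 1$ the map $y\mapsto\Gamma(a+iy)/\Gamma(b+iy)$ is continuous on the compact interval $[-1,1]$ — $\Gamma$ is holomorphic and non-vanishing on $\RRe(s)>0$, so the denominator stays away from $0$ — hence bounded there. Together these give $|\Gamma(a+iy)/\Gamma(b+iy)|=O_{a,b}(1)$, and the bound for $\Gamma(a-iy)/\Gamma(b-iy)$ follows from $|\Gamma(\bar s)|=|\Gamma(s)|$.

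I do not anticipate a genuine obstacle; both parts are routine. The one place meriting a little care is the bookkeeping in (1): one must patch together the trivial bound (valid for all $z$, in particular near $z=0$ where the claimed inequality is vacuous because its right-hand side blows up) with the integration-by-parts bound (useful only for $|z|$ large), and one should make explicit that the $X^{w\sigma}$ factor comes precisely from $e^{wL\sigma}$ with the residual $(\sqrt{M}/2\pi)^{w\sigma}$ absorbed into the implied constant — legitimate because $M$ is fixed and, in every use of the lemma, $w$ ranges over a bounded set (for instance $w\in[0,3/2]$ in the proof of Lemma~\ref{lem:expansionAEterm}).
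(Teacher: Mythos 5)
Your proof is correct, and part (1) is essentially the paper's own argument: analytic continuation of $g$ via the Fourier representation over $\supp(\hg)$, the trivial $L^\infty$ bound on $e^{2\pi i z\xi}$ giving the $X^{w\sigma}$ factor, and integration by parts in $\xi$ to produce the polynomial decay in $|z|$. Your extra paragraph patching the $|z|\le 1$ region against the integration-by-parts bound is a small refinement over the paper, which simply absorbs this, and it addresses the cosmetic awkwardness that the stated right-hand side diverges at $z=0$.

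For part (2) you take a genuinely different route. The paper's proof is a one-line appeal to the Beta-function integral: for $0<a<b$,
\begin{equation*}
\frac{\Gamma(a+iy)\,\Gamma(b-a)}{\Gamma(b+iy)} \;=\; \int_0^1 t^{a+iy-1}(1-t)^{b-a-1}\,dt,
\end{equation*}
whose modulus is dominated termwise by $\int_0^1 t^{a-1}(1-t)^{b-a-1}\,dt = B(a,b-a)$, giving the uniform bound at once with no case analysis and no asymptotics. You instead invoke Stirling's formula in a vertical strip to control $|y|\ge 1$ (where the exponential and $\sqrt{2\pi}$ factors cancel and the power $|y|^{a-b}\le 1$ because $a<b$), and continuity plus non-vanishing of $\Gamma$ on $\Re(s)>0$ for the compact range $|y|\le 1$, finishing with the reflection $|\Gamma(\bar s)|=|\Gamma(s)|$ for the $\pm$ sign. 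Both arguments are correct and use only standard facts; the Beta-function trick is shorter and avoids splitting into regimes, while your Stirling argument is perhaps the more familiar tool and in addition exposes the precise decay rate $|y|^{a-b}$, which is more information than the lemma needs but could be useful if one later wanted a quantitative version.
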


\begin{proof}
(1): As $g(\tau) = \int \hg(\xi) e^{2\pi i \xi \tau} d\xi$, we have
\bea g(\tau-iy) & \ = \ & \intii \hg(\xi) e^{2\pi i (\tau - iy)\xi}
d\xi \nonumber\\ & = & \intii \hg^{(2n)}(\xi) (2\pi i (\tau
-iy))^{-n} e^{2\pi i (\tau - iy)\xi} d\xi \nonumber\\ & \ll &
e^{2\pi y \sigma} (\tau -iy))^{-2n}; \eea the claim follows by
taking $y = wL/2\pi$.

(2): As $|\Gamma(x-iy)|=|\Gamma(x+iy)|$, we may assume all signs are
positive. The claim follows from the definition of the Beta
function: \be \frac{\Gamma(a+iy)\Gamma(b-a)}{\Gamma(b+iy)} \ = \
\int_0^1 t^{a+iy-1} (1-t)^{b-a-1} \ = \ O_{a,b}(1). \ee
\end{proof}

%%%%%%%%%%%%%%%%%%%%%%%%%%%%%%%%%%%%%%%%%%%%%%%%%%%%%%%%%%%%%%%%%%%%%%%%%%%%%%%%%%%%%%%%%%%
%%%%%%%%%%%%%%%%%%%%%%%%%%%%%%%%%%%%%%%%%%%%%%%%%%%%%%%%%%%%%%%%%%%%%%%%%%%%%%%%%%%%%%%%%%%

%%%%%%%%%%%%%%%%%%%%%%%%%%%%%%%%%%%%%%%%%%%%%%%%%%%%%%%%%%%%%%%%%%%%%%%%%%%%%%%%%%%%%%%%%%%%%
%%%%%%%%%%%%%%%%%%%%%%%%%%%%%%%%%%%%%%%%%%%%%%%%%%%%%%%%%%%%%%%%%%%%%%%%%%%%%%%%%%%%%%%%%%%%%

\ \\

\end{document}